\newtheorem{theorem}{Theorem}[section]
\newtheorem{lemma}{Lemma}[section]
\newtheorem{definition}{Definition}[section]
\newtheorem{example}{Example}[section]
\newtheorem{conjecture}{Conjecture}[section]
\newtheorem{remark}{Remark}
\renewenvironment{proof}[1][Proof]{\noindent\textit{#1. } }{\hfill$\square$}
\numberwithin{equation}{section}
\newcommand{\Cu}{C_{u,\alpha}}
\def\d{{\;\rm{d}}}
\title{{Do we need decay-preserving error estimate for solving parabolic equations { with  initial} singularity?}
}
\date{}
\author{
Jiwei Zhang\thanks{
School of Mathematics and Statistics, and Hubei Key Laboratory of Computational Science, Wuhan University, Wuhan 430072, China (jiweizhang@whu.edu.cn).}
\and Zhimin Zhang \thanks{
%Beijing Computational Science Research Center, Beijing, 100193, P.R. China, and
{  Department of Mathematics, Wayne State University, Detroit, MI 48202, USA (ag7761@wayne.edu).} }
\and Chengchao Zhao\thanks{
Beijing Computational Science Research Center, {Beijing 100193,  China} (cheng\_chaozhao@csrc.ac.cn). }
}
\begin{document}
\maketitle

\begin{abstract}

    Solutions exhibiting weak initial singularities arise in various equations, including diffusion and subdiffusion equations. When employing the well-known L1 scheme to solve subdiffusion equations with weak
  singularities, numerical simulations reveal  that this scheme exhibits varying convergence rates for different choices of model parameters (i.e., domain size, final time $T$, and reaction coefficient $\kappa$). This elusive phenomenon is not unique to the L1 scheme but is also observed  in other numerical methods for reaction-diffusion equations such as the backward Euler (IE) scheme, Crank-Nicolson (C-N) scheme, and two-step backward differentiation formula (BDF2) scheme. The existing literature lacks an explanation for the existence of two different convergence regimes, which has puzzled us for a long while and motivated us to study  this inconsistency between the  standard convergence theory and numerical experiences.  In this paper, we provide a general methodology to systematically obtain error estimates that incorporate the exponential decaying feature of the solution. We term this novel error estimate the `decay-preserving error estimate' and apply it to the aforementioned IE, C-N, and BDF2 schemes.  Our decay-preserving error estimate consists of a low-order term with an exponential coefficient  and a high-order term with  an algebraic coefficient, both of which depend on the model parameters.  Our estimates reveal  that  the varying convergence rates are caused by a  trade-off  between these two  components in different model parameter regimes. By considering the model parameters, we capture different states of  the convergence rate that traditional error estimates fail to explain. This approach retains more properties of the continuous solution. We validate our analysis with numerical results.

% The solutions with weakly initial singularity  arises in a wide variety of equations, for example,  the sub-diffusion equations. For the diffusion and sub-diffusion problems with weakly initial singular solutions, we  find that
  %the convergence rates of the  numerical scheme can be totally different for some choices of model parameters.   Specifically,  in our numerical simulations,
     %the convergence rate may be preformed as two totally unrelated states, i.e., $\mathcal{O}(\tau^\alpha)$ and $\mathcal{O}(\tau^k)$,    for different choices of model parameters including final time $T$, spatial domain $\Omega$ and reaction coefficient $\kappa$. Here $\tau$ denote the time step size, $0<\alpha<1$ is the regularity parameter  and  $k=1,2$ is the optimal convergence rate of the schemes.  Such a numerical phenomenon is surprising and interesting, and is significant to be understood theoretically.   In this paper, we  revisit the error estimate of Implicit Euler scheme, BDF2 scheme and C-N scheme to ordinary differential equations and linear reaction sub-diffusion equations with weakly initial singularity. We present a general framework for the rigorous two-scale error estimates of  these schemes. The two-scale error estimates may  preform a convergence rate of $\tau^\alpha$ or $\tau^k$  for different choices of model parameters, which is more consistent with the numerical phenomena. Besides,  a conjecture of the two-scale error estimate is presented for the L1 scheme to linear sub-diffusion equations. Numerical examples are provided to demonstrate our theoretical results.
  \end{abstract}
  {{\bf Keywords:} diffusion  equation,  subdiffusion equation, weak initial  singularity, decay-preserving error estimate, convergence rate, model parameters}
\section{Introduction}
%{Numerical analyses of linear reaction subdiffusion equations have been the focus of many studies.}
{  Many} studies have been carried out on  numerical analysis of {the following}  linear reaction subdiffusion equations
\begin{equation}\label{EQ_FPDEs}
\begin{aligned}
\partial_t^\alpha u -\Delta u   &= \kappa u+f, && x\in \Omega,\; t\in(0,T],\\
u(x,t) &=0, && x\in \partial\Omega,\; t\in(0,T],\\
u(x,0) &= u_0(x),&& x\in \bar{\Omega},
\end{aligned}
\end{equation}
where the reaction coefficient~$\kappa$ is a given constant, and $\partial_t^\alpha$ denotes the fractional Caputo's derivative of order $\alpha$ with respect to $t$, namely
%\begin{equation}
%\partial_t^\alpha u(x,t) = \frac1{\Gamma(1-\alpha)}\int_0^t(t-s)^{-\alpha}\partial_s u(x,s)\d s.
%\end{equation}
%\begin{equation*}
%\partial_{t}^\alpha u({x},t)  =\left\{
%\begin{array}{cl} \displaystyle\frac{1}{\Gamma(1-\alpha)} \int_0^t\frac{\partial_{s} u({x},s)}{(t-s)^\alpha} ds,&\quad  0<\alpha <1,\\
%\partial_t,&\quad \alpha =1.
%\end{array}
%\right.
%\end{equation*}
\begin{equation*}
\partial_{t}^\alpha u({x},t)  =
\frac{1}{\Gamma(1-\alpha)} \int_0^t\frac{\partial_{s} u({x},s)}{(t-s)^\alpha} ds,\quad  0<\alpha <1. \end{equation*}

Diffusion is one of the most prominent transport mechanisms in nature. The classical diffusion models  generally describe the Brownian motion of particles. {However, over} the last {few} decades, {numerous}  experimental {findings suggest}  that the Brownian motion assumption may not be adequate for accurately describing some physical processes{, such as anomalous diffusion or} non-Gaussian process{. Instead, these processes are} better described by  {the} model \eqref{EQ_FPDEs}. 
{For a comprehensive exploration of various applications and physical modeling, one can refer to the surveys in} \cite{Metzler2014Anomalous,Ralf2000The} and the monograph \cite{JK2011}.

In any numerical methods for solving problem \eqref{EQ_FPDEs}, a key consideration
is that the solution {exhibits}  a {weak singularity} near the initial time $t= 0$ even though the  initial value is smooth. Specifically, if $u_0(x)\in H^1_0(\Omega) \cap H^2(\Omega)$,  the solution {of}  problem \eqref{EQ_FPDEs} with $f = 0$ satisfies \cite{JLZ18}
\begin{equation}
\|\partial_t u(t)\|_{L^2(\Omega)}\leq C t^{\alpha-1}. \label{EQ_Fregularity}
 \end{equation}

Under the regularity condition \eqref{EQ_Fregularity}, the
convergence rate with the maximum norm in time is {proven} to be $\mathcal{O}(\tau^\alpha)$  for various numerical schemes,  such as {the} L1 and L2-type \cite{LLZ18,SOG17,LMZ19,K21} and convolution quadrature schemes \cite{JLZ18, L88}. {Furthermore}, the point-wise error estimate of {the} L1 scheme at time $t=t_n$ is {established as}  $\mathcal{O}(\tau t_n^{\alpha-1})$ in \cite{GOS18,LQZ21}, {implying that the} L1 scheme with {a} smooth initial value is first-order convergent at {the} final time level. {Further discussions can be found} in  \cite{YKF18,JLZ16,JLZ17,MM15}. An example is widely used to numerically verify the theory by taking $\Omega = (0,L)$, and the exact solution of problem \eqref{EQ_FPDEs} is constructed as
\begin{equation} \label{exact}
 u = t^{\alpha}\sin(\pi x/L).
\end{equation}

In the {simulation}, we use the L1 scheme to discretize {the Caputo} derivative with {N} uniformly distributed grid points, and use the finite difference method in space {with $M$} grid points.  Table \ref{T_F1} {displays} temporal convergence rates of {the}  L1 scheme for {a} fixed time level $t_N = T$, $\alpha = 0.5$ and $M=20000$.  {An} interesting and puzzling phenomenon {is observed in} Table \ref{T_F1} that the convergence rates are influenced by {the} choices of model parameters, i.e.,  the interval length $L$, the final time $T$ and  the reaction coefficient~$\kappa$.  Specifically, the convergence order is $\mathcal{O}(\tau)$ while choosing $L=\pi, \kappa = 0,-1$, which is consistent with the existing results of $\mathcal{O}(\tau t_n^{\alpha-1})$ in \cite{GOS18,LQZ21}.  On the other hand, the convergence rate { approaches}  $\mathcal{O}(\tau^{2-\alpha})$ for $T = 10, L=1,\kappa = -8, N = 64$, { consistent} with the results in \cite{LX07,SW06}. %Besides, the convergence rates seems to be reduced as $N$ increases.

%the convergence rate at final time $T$ is $\mathcal{O}(\tau)$ for large length $L =\pi$,  small final time $T=1$ and reaction coefficient $\kappa = 0, 1$. On the other hand, the convergence rates seems to be $\mathcal{O}(\tau^{2-\alpha})$ for small $L =1$, large $T=10$ and $\kappa = -8$.
\begin{table}[!ht]
\begin{center}
\caption {The empirical  convergence rates  of  L1 scheme by taking various parameters $L$, $\kappa$ and $T$.
\label{T_F1} } \vspace*{0.5pt}
\def\temptablewidth{1.0\textwidth}
{\rule{\temptablewidth}{1pt}}
\begin{tabular*}{\temptablewidth}{@{\extracolsep{\fill}}cccccccccc}
&$N$ &\multicolumn{3}{c}{$L=1$}
&\multicolumn{3}{c}{$L=\pi$} \\
\cline{3-5}    \cline{6-8}
&& $\kappa = 1$&$\kappa = 0$ & $\kappa = -8$&$\kappa = 1$&$\kappa = 0$ & $\kappa = -8$\\
\hline
 %  &32  &1.33 &1.34 &1.40&1.00&1.09& 1.33      \\
     &64   & 1.28 & 1.29  &1.36 &1.00 &1.06 &1.28 \\
$T=1$ &128  &1.24 &1.25  &1.32 &1.00&1.05&  1.24\\
     &256  &1.19 &1.20  &1.28 &1.00&1.03&  1.19\\
     &512 &1.15&1.16 &1.23 &1.00&1.02&  1.15\\
\hline
    % &32   &1.43&1.44 &1.47   &1.00 &1.20&1.43        \\
     &64  &1.40& 1.41  & 1.45  &1.00&1.16 &1.40 \\
$T=10$ &128  &1.37&1.38 &1.43  &1.00&1.12 &1.37  \\
     &256  &1.33&1.34 &1.40  &1.00&1.09 &1.33  \\
     &512 &1.30&1.31 &1.37 &1.00&1.07 &1.29   \\

\end{tabular*}
{\rule{\temptablewidth}{1pt}}
\end{center}
\end{table}

The results in  Table \ref{T_F1} {indicate} that the different model parameters (i.e., domain sizes, final time $T$, and reaction coefficient $\kappa$) may {lead to varying} convergent rates with respect to $N$.  This  elusive phenomena cannot  be explained  by error estimates in previous literatures{, as their theories do not consider} the influence of model parameters. This inconsistence between numerical experiments and theoretical analysis {has puzzled}  us for a long time, {motivating  further investigation to unveil  the underlying dynamics and solve the mystery}. It is natural to {inquire whether a  similar behavior occurs in} the classical diffusion equation
{  (i.e., $\alpha = 1$ in \eqref{EQ_FPDEs}):}
\begin{equation}\label{EQ_exact}
\begin{aligned}
\partial_t u -\Delta u   &= \kappa u+f, && x\in \Omega,\; t\in(0,T],\\
u(x,t) &=0, && x\in \partial\Omega,\; t\in[0,T],\\
u(x,0) &= u_0(x),&& x\in \bar{\Omega}.
\end{aligned}
\end{equation}

In the {simulation, we use the same exact solution \eqref{exact}  with $\alpha = 0.5$ for \eqref{EQ_exact}}, which also { satisfies} the singularity condition \eqref{EQ_Fregularity}.  {Taking} the implicit Euler scheme as  an example,  Table \ref{T_P1} shows that the convergence rate is $\mathcal{O}(\tau^{\alpha})$ for $L=\pi, T = 1${,} and various $\kappa$ {values, while it is } $\mathcal{O}(\tau)$ for $L=1$ and different $T$ and $\kappa$. On the other hand  for $L=\pi$ and $T=10$, the convergence rate is $\mathcal{O}(\tau)$ for $\kappa = 0,-8$, and is changing to $\mathcal{O}(\tau^\alpha)$ for $\kappa = 1$. {Similar  phenomena are observed in simulations using} C-N and BDF2 schemes{, where } the convergence rates change with different model parameters. The results in Tables \ref{T_F1} and \ref{T_P1} {suggest}  that it is a common phenomenon {for}  both diffusion and subdiffusion equations when an initial singularity is {present}.

%
%These observations in  Table \ref{T_F1} seem not to be explained  by error estimates in previous literatures as their theory does not involves the influence of domain length, final time $T$ and reaction coefficient $\kappa$.  This elusive phenomenon has been puzzling us, and it is natural to ask ourself if this phenomenon is a common problem. To the end, we further investigate the computation of the classical diffusion equation (i.e., the case of $\alpha= 1$ in \eqref{EQ_FPDEs}) {with regularity condition \eqref{EQ_Fregularity} (here $0<\alpha<1$ is a constant rather than the fractional index)} by implicit Euler scheme, Crank-Nicolson (C-N) scheme, and BDF2 scheme. {The similar phenomenon still can be observed under the regularity condition \eqref{EQ_Fregularity} and even be more distinct. Taking implicit Euler method for an example,  Table \ref{T_P1} shows that,  for $T=1$,  the convergence rates is $\mathcal{O}(\tau^{\alpha})$ for large spatial length $L=\pi$ and is $\mathcal{O}(\tau)$ for small spatial length $L=1$. However, the convergence rates  for $L=\pi$ and $T=10$ become $\mathcal{O}(\tau)$ for $\kappa = 0,-8$ and is still $\mathcal{O}(\tau^\alpha)$ for $\kappa = 1$.} Such phenomenon also arose in the simulations for C-N and BDF2 schemes, which will be reported in the late sections.

\begin{table}[!ht]
\begin{center}
\caption {The empirical   convergence rates of implicit Euler scheme    by taking various $L$, $\kappa$ and $T$.
\label{T_P1} } \vspace*{0.5pt}
\def\temptablewidth{1.0\textwidth}
{\rule{\temptablewidth}{1pt}}
\begin{tabular*}{\temptablewidth}{@{\extracolsep{\fill}}cccccccccc}
&$N$ &\multicolumn{3}{c}{$L=1$}
&\multicolumn{3}{c}{$L=\pi$} \\
\cline{3-5}    \cline{6-8}
    && $\kappa = 1$&$\kappa = 0$ & $\kappa = -1$&$\kappa = 1$&$\kappa = 0$ & $\kappa = -1$\\
\hline
 %  &32  &1.33 &1.34 &1.40&1.00&1.09& 1.33      \\
            &64   &1.03   & 1.03  &1.02&0.47 &0.50&0.56 \\
$T=1$ &128  &1.00 &1.01  &1.01 &0.48&0.50&  0.54\\
           &256  &0.98 &0.99  &1.00&0.49&0.50&  0.53\\
            &512 &0.96&0.98 &1.00 &0.49&0.50&  0.52\\
\hline
    % &32   &1.43&1.44 &1.47   &1.00 &1.20&1.43        \\
              &64  &1.01& 1.01 & 1.01 &0.47&1.03 &1.01 \\
$T=10$ &128  &1.01&1.01 &1.01  &0.48&1.01 &1.01  \\
             &256  &1.00&1.00 &1.00  &0.49&1.00 &1.00  \\
             &512 &1.00&1.00&1.00 &0.49&0.99 &1.00   \\

\end{tabular*}
{\rule{\temptablewidth}{1pt}}
\end{center}
\end{table}

The existing theory {fails to explain the observed variations in} convergence rates with different model parameters. This { discrepancy} motivates us to develop a methodology for systematically describing various convergence regimes{, spanning from lower to high orders}. To the end, we first eliminate the effect of the spatial domain {by considering} the corresponding ordinary differential equations (ODEs) with an initial singularity. {We establish refined} decay-preserving  error estimates {for} the widely used implicit Euler scheme, C-N scheme, and BDF2 scheme. Specifically for $\kappa <0$, the  novel decay-preserving error estimates are proven to be
\begin{align}
|e^n|\leq e^{C\kappa t_n}|e^{0}|+ \Cu \Big( e^{C\kappa t_n}\tau^\alpha+Ct_{n-1}^{\alpha-1}\tau^k\Big),\label{EQ_INode11}
\end{align}
where $k=1$ represents for implicit Euler scheme and $k=2$ for  C-N and BDF2 schemes. {Afterwards, leveraging} the corresponding  eigenvalue problem, we extend the refined decay-preserving  error estimates {to}  diffusion problems with {an} initial singularity. {Subsequently, we conduct} numerical experiments to further demonstrate and {validate} our decay-preserving error estimates{, which can effectively} capture  the { characteristics} of various convergence regimes for different  $L, \kappa$, and $T$. { Consequently, we establish} an analyzable {connection} between the model parameters and the {primary} features of convergence rates{. Furthermore, we put forward a conjecture regarding} the preserving error estimate for subdiffusion equations.

{The organization of this paper is as follows. In Section \ref{Sec_2},  we {present}  a general methodology for systematically obtaining new decay-preserving error estimates {for widely} used IE, C-N, and BDF2 schemes {applied}  to ODEs \eqref{EQ_ordinary}. 
{In Section  \ref{Sec_3}, by incorporating} the eigenvalue problem \eqref{EQ_eigen}, {we extend these} decay-preserving error estimates  to reaction-diffusion equations \eqref{EQ_exact}. {Section  \ref{Sec_4} proposes a  conjecture for} the decay-preserving error estimate for reaction-subdiffusion equation \eqref{EQ_FPDEs}. Based on these decay-preserving error estimates,  we further  discuss the {connections} between convergence regimes and model parameters{. Furthermore, we} provide numerical examples to { illustrate and validate} our theoretical results. {Section 5 contains} detailed proofs of {the} main results, and the paper {concludes with a summary} in Section 6.}

\section{Decay-preserving  error estimate for  { ODE solvers}\label{Sec_2}}
We first present the error  estimates {for} widely used schemes, {namely}, the implicit Euler scheme, Crank-Nicolson (C-N) scheme and BDF2 scheme, {applied to solve}  the following ODEs
 \begin{equation}\label{EQ_ordinary}
\begin{aligned}
&u_t = \kappa u +f, \quad t>0
\end{aligned}
\end{equation}
with the initial value $u(0)= u_0$ and a given constant $\kappa\in \mathbb{R}$. Assume {that} the solution of \eqref{EQ_ordinary} satisfies the following regularity
\begin{equation}\label{Assump_ode}
      	 |\partial^k u| \leq C_{u,\alpha} t^{ -k + \alpha },\ k = 1,2,3  \text{ and }  \;  0<\alpha <1.
       \end{equation}
As the first step, we generate a grid by
\begin{equation} \label{grid}
t_n = n\tau, \quad n = 0,1,\dots,N, \quad  \tau = T/N.
\end{equation}
At a grid point, we take $U^n$ as the approximate value of the true solution of $u^n=u(t_n)$.  For any time sequence $\{v^n\}_{n=0}^N$, we introduce the following notations
 \begin{align}
\nabla_\tau v^n  = v^n-v^{n-1}, \quad
v^{n-\frac12} = \frac12(v^n+v^{n-1}), \quad D_2 v^n := \frac{3v^n-4v^{n-1}+v^{n-2}}{2\tau}.
\end{align}
The discretizations are given by
 \begin{align}
&\frac{1}{\tau}\nabla_\tau U^n = \kappa U^n+f^n,   &&\text{Implicit Euler scheme}\label{EQ_IEode} \\
&\frac{1}{\tau} \nabla_\tau U^n= \kappa U^{n-\frac12}+f^{n-\frac12},  && \text{C-N scheme} \label{EQ_CNode}\\
& D_2 U^n=\kappa U^{n}+f^{n}, \;\;n\geq 2,  && \text{BDF2 scheme}
\label{EQ_BDF2ode}
\end{align}
with initial value $U^0 = u_0$. Noting BDF2 needs two initial values, we here use IE scheme to compute $U^1$.

Before the numerical analysis of schemes \eqref{EQ_IEode},\eqref{EQ_CNode} and \eqref{EQ_BDF2ode}, we first present a lemma which will be {used frequently in later} proofs.
\begin{lemma}\label{Lemma_SCN}
Given constants $\varrho>1$ and $\beta<-1$, let
 \begin{align}
I_n & =  \sum_{k=2}^{n-1}\tau \varrho^{-(n-k-1)}t_{k}^\beta , \label{EQ_1347}
\end{align}
where $t_k$ is defined in \eqref{grid}.
 Then it holds for $n\geq 4$ that
\begin{align}
I_n&\leq \frac{-1}{\beta+1}\Big((2^{-(\beta+1)}(1-\varrho^{-(\frac{n}{2}-1)})-1)t_{n-1}^{\beta+1}+\varrho^{-(\frac{n}{2}-2)}\tau^{\beta+1}
\Big).\label{EQ_0008}
\end{align}
\end{lemma}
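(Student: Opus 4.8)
The plan is to exploit the competition hidden in $I_n$: the geometric weight $\varrho^{-(n-k-1)}$ is largest (close to $1$) for $k$ near $n$, whereas the factor $t_k^\beta$ is largest for small $k$ (since $\beta<0$). Accordingly I would split the sum at the midpoint,
\[
I_n=\underbrace{\sum_{k=2}^{n/2}\tau\varrho^{-(n-k-1)}t_k^\beta}_{=:\,I_n^{\mathrm{low}}}+\underbrace{\sum_{k=n/2+1}^{n-1}\tau\varrho^{-(n-k-1)}t_k^\beta}_{=:\,I_n^{\mathrm{high}}},
\]
and bound each block by the crude estimate on whichever quantity is small there. The only analytic fact I would use is that $t\mapsto t^\beta$ is decreasing, so the left Riemann sum is dominated by the integral, $\tau t_k^\beta\le\int_{t_{k-1}}^{t_k}t^\beta\,dt$, together with $\int_p^q t^\beta\,dt=\tfrac{1}{\beta+1}\bigl(q^{\beta+1}-p^{\beta+1}\bigr)$ and the crucial sign bookkeeping $\beta+1<0$.

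For the high block, where the weight is harmless, I would bound $\varrho^{-(n-k-1)}\le1$ (valid for $k\le n-1$) and telescope the Riemann sums into a single integral,
\[
I_n^{\mathrm{high}}\le\sum_{k=n/2+1}^{n-1}\tau t_k^\beta\le\int_{t_{n/2}}^{t_{n-1}}t^\beta\,dt=\frac{-1}{\beta+1}\bigl(t_{n/2}^{\beta+1}-t_{n-1}^{\beta+1}\bigr).
\]
For the low block I would instead discard the smallness of $t_k^\beta$ and control the weight by its maximum over the block, which occurs at $k=n/2$ and equals $\varrho^{-(n/2-1)}$; the residual Riemann sum is again closed by an integral down to $t_1=\tau$,
\[
I_n^{\mathrm{low}}\le\varrho^{-(n/2-1)}\sum_{k=2}^{n/2}\tau t_k^\beta\le\varrho^{-(n/2-1)}\,\frac{-1}{\beta+1}\bigl(\tau^{\beta+1}-t_{n/2}^{\beta+1}\bigr),
\]
which is precisely where the isolated $\tau^{\beta+1}$ term and its exponentially small coefficient are produced.

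Adding the two blocks gives
\[
I_n\le\frac{-1}{\beta+1}\Bigl[(1-\varrho^{-(n/2-1)})\,t_{n/2}^{\beta+1}-t_{n-1}^{\beta+1}+\varrho^{-(n/2-1)}\tau^{\beta+1}\Bigr],
\]
and the final step is cosmetic: I would write $t_{n/2}=\tfrac12 t_n$, hence $t_{n/2}^{\beta+1}=2^{-(\beta+1)}t_n^{\beta+1}$, and then replace $t_n^{\beta+1}$ by the larger $t_{n-1}^{\beta+1}$ (legitimate since $t_{n-1}<t_n$ and $\beta+1<0$, while the coefficient $1-\varrho^{-(n/2-1)}$ is positive). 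This converts the leading term into $2^{-(\beta+1)}(1-\varrho^{-(n/2-1)})t_{n-1}^{\beta+1}$ and reproduces \eqref{EQ_0008}; since the coefficient $\varrho^{-(n/2-1)}$ I obtain on $\tau^{\beta+1}$ is even smaller than the stated $\varrho^{-(n/2-2)}$, the claimed inequality follows a fortiori.

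The genuine difficulty is not any single estimate but the \emph{direction} of every monotone replacement: because $\beta+1<0$ the map $x\mapsto x^{\beta+1}$ reverses inequalities, so each step (the integral comparison, bounding the weight by its block maximum, and passing from $t_n$ to $t_{n-1}$) must be checked to \emph{increase} the right-hand side, and the positivity of the surviving coefficients—chiefly $1-\varrho^{-(n/2-1)}$, which relies on $\varrho>1$ and $n\ge4$—must be confirmed before any such replacement. The remaining point requiring care is the parity of $n$: for odd $n$ the midpoint $n/2$ is not an index, so I would split at $\lfloor n/2\rfloor$ and absorb the half-step shifts into the exponents, verifying that the powers $t_{n/2}^{\beta+1}$ and the factors $\varrho^{-(n/2-1)}$ degrade only in the favourable direction. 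This parity handling, rather than the integral estimates themselves, is where the hypothesis $n\ge4$ is actually consumed.
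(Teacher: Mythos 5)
Your proof is correct; it even yields a slightly sharper bound than the one claimed (coefficient $\varrho^{-(n/2-1)}$ rather than $\varrho^{-(n/2-2)}$ on $\tau^{\beta+1}$), from which \eqref{EQ_0008} follows a fortiori, and your midpoint splitting is precisely the two-scale idea the paper's own remark emphasizes. The execution, however, is genuinely different. The paper never splits the original sum: it first applies the integral comparison $\tau t_k^\beta\le \frac{-1}{\beta+1}\big(t_{k-1}^{\beta+1}-t_k^{\beta+1}\big)$ to every term, then performs a summation by parts to shift the differences onto the geometric kernel, arriving at the identity \eqref{EQ_2345} whose boundary terms already contain $-t_{n-1}^{\beta+1}$ and $\varrho^{-(n-3)}\tau^{\beta+1}$; only the residual weighted sum $(\varrho-1)\sum_{k=2}^{n-2}\varrho^{-(n-k-1)}t_k^{\beta+1}$ is then split at $n_0=[\frac n2]$, each half being controlled by monotonicity of $t_k^{\beta+1}$ together with telescoping of the geometric factors. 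You instead split $I_n$ itself at the midpoint and, inside each block, freeze the harmless factor ($\varrho^{-(n-k-1)}\le 1$ in the high block, $\le\varrho^{-(n/2-1)}$ in the low block), so that the surviving Riemann sums telescope directly into integrals. Your route avoids the Abel summation entirely—the most error-prone algebra in the paper's proof—and keeps every sign check local to a block, at no cost in the final estimate; the paper's route extracts both boundary terms at once from a single identity but then still needs the two-sided inequality $t_{n_0}\le t_n/2\le t_{n_0+1}$ and two separate geometric telescopes to conclude. Your treatment of odd $n$ is also sound, and in fact cleaner than you suggest: for odd $n$ one has $t_{\lfloor n/2\rfloor}=t_{n-1}/2$, so the factor $2^{-(\beta+1)}t_{n-1}^{\beta+1}$ appears exactly and the final replacement of $t_n^{\beta+1}$ by $t_{n-1}^{\beta+1}$ is not even needed, while the low-block weight exponent $(n-1)/2$ exceeds $n/2-1$, so all replacements go in the favourable direction.
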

\begin{proof}
Applying the inequality ${t_{k-1}^{\beta+1}-t_{k}^{\beta+1}}\geq -(\beta+1){\tau}t_k^\beta$ to \eqref{EQ_1347}, one has
\begin{align}
I_n&\leq \frac{-1}{\beta+1}\Big(\sum_{k=2}^{n-1}\varrho^{-(n-k-1)}(t_{k-1}^{\beta+1}-t_{k}^{\beta+1})\Big)\nonumber\\
&=\frac{-1}{\beta+1}\Big((\varrho-1)\sum_{k=2}^{n-2}\varrho^{-(n-k-1)}t_{k}^{\beta+1}+\varrho^{-(n-3)}\tau^{\beta+1}
-t_{n-1}^{\beta+1}\Big).\label{EQ_2345}
\end{align}
Set the integer $n_0 = [\frac{n}{2}]$. It is easy to check $t_{n_0}\leq \frac{t_n}{2}\leq t_{n_0+1}$ and
\begin{align}
(\varrho-1)\sum_{k=2}^{n-2}\varrho^{-(n-k-1)}t_{k}^{\beta+1}&= (\varrho-1)\big(\sum_{k=2}^{n_0}\varrho^{-(n-k-1)}t_{k}^{\beta+1}+\sum_{k=n_0+1}^{n-2}\varrho^{-(n-k-1)}t_{k}^{\beta+1}\big)\label{EQ_1607}\\
&\leq \tau^{\beta+1}(\varrho^{-(n-n_0-2)}-\varrho^{-(n-3)})+t_{n_0+1}^{\beta+1}(1-\varrho^{-(n-n_0-2)})\nonumber\\
&\leq \tau^{\beta+1}(\varrho^{-(\frac{n}{2}-2)}-\varrho^{-(n-3)})+(\frac{t_{n}}{2})^{\beta+1}(1-\varrho^{-(\frac{n}{2}-1)}).\label{EQ_2346}
\end{align}
The proof is completed by inserting \eqref{EQ_2346} into \eqref{EQ_2345}.
%\begin{align}
%S_n&\leq \frac{1}{2-\alpha}\Big((2^{2-\alpha}(1-\psi_\tau^{-(\frac{n}{2}-1)})-1)t_n^{\alpha-2}+\psi_\tau^{-(\frac{n}{2}-2)}\tau^{\alpha-2}\Big).\label{EQ_0008}
%\end{align}
\end{proof}
\begin{remark}
 In summation  \eqref{EQ_1347},   the weight of $t_2$ is $\varrho^{-(n-3)}${,}  and the weight of $t_{n-1}$ is 1{. This implies that} the weights are in different scales for different $t_k$ when $n\gg 1$. The aim of introducing an integer $n_0$ is to  divide the summation into two parts in \eqref{EQ_1607}{, allowing one to} estimate  the two scales in \eqref{EQ_0008}, respectively. In other words, the weight of {the} leading order { exponentially decays, while} the second order (constant quantity) {algebraically decays} with respect to $n$.  The two-scale estimate in Lemma \ref{Lemma_SCN} will play a key role in { proving} our {decay-preserving} error estimates.
\end{remark}

\subsection{Decay-preserving  error estimate for implicit Euler scheme}
We consider the case of $\kappa < 0$.  It follows from \eqref{EQ_IEode} that
\begin{equation*}
(1-\kappa\tau)U^n=U^{n-1}+\tau f^n,
\end{equation*}
which yields
\begin{equation*}
U^n=(1-\kappa\tau)^{-n}U^{0}+\tau  \sum_{k=1}^n (1-\kappa\tau)^{-(n+1-k)}f^k.
\end{equation*}
Set the error $e^n = u^n-U^n$. The error satisfies the following equation
\begin{equation}\label{EQ_1709}
e^n=(1-\kappa\tau)^{-n}e^{0}+\tau \sum_{k=1}^n (1-\kappa\tau)^{-(n+1-k)}R^k,
\end{equation}
with the truncation error
$R^k = -\frac1{\tau_k}\int_{t_{k-1}}^{t_k}(s-t_{k-1})u''(s)\d s.$
Following regularity condition \eqref{Assump_ode}, one has
\begin{equation}
|R^1|\leq \tau^{\alpha-1} \;\text{ and } \;|R^k| \leq \Cu \tau t_{k-1}^{\alpha-2},\quad k\geq 2.\label{EQ_1606}
%.\label{EQ_1607}
\end{equation}

\begin{lemma}\label{Pro_1}
For any $0<\tau\leq -1/\kappa$ and $\upsilon\geq 0$, it holds that
\begin{equation} \label{inq1}
e^{\kappa \upsilon\tau}\leq (1-\kappa \tau)^{-\upsilon}\leq e^{\frac{\kappa \upsilon\tau}{2}}.
\end{equation}
\end{lemma}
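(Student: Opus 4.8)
The plan is to reduce the two-sided bound \eqref{inq1} to the single exponent case $\upsilon=1$ and then to a pair of elementary scalar inequalities. First I would dispose of $\upsilon=0$, where all three quantities equal $1$. For $\upsilon>0$, since $\kappa<0$ the base $1-\kappa\tau>1$ and both exponentials are positive, so every quantity in \eqref{inq1} is positive; as $a\mapsto a^{\upsilon}$ is increasing on $(0,\infty)$, it suffices to establish the case $\upsilon=1$, namely $e^{\kappa\tau}\leq (1-\kappa\tau)^{-1}\leq e^{\kappa\tau/2}$, and then raise all three sides to the power $\upsilon$.

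Next I would introduce the variable $x=-\kappa\tau$. The hypothesis $0<\tau\leq -1/\kappa$ together with $\kappa<0$ gives exactly $0<x\leq 1$, and $1-\kappa\tau=1+x$. Taking reciprocals in the $\upsilon=1$ claim (which reverses the inequalities and is legitimate since all terms are positive) turns it into the equivalent statement
\begin{equation*}
e^{x/2}\leq 1+x\leq e^{x},\qquad 0<x\leq 1.
\end{equation*}
The right-hand inequality $1+x\leq e^{x}$ is the standard tangent-line bound, valid for every real $x$ (e.g.\ by convexity of $e^x$ or its Taylor expansion with nonnegative remainder), so it requires no use of the restriction on $\tau$.

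The crux is the left-hand inequality $e^{x/2}\leq 1+x$, and this is precisely where the constraint $x\leq 1$, i.e.\ $\tau\leq -1/\kappa$, is indispensable---the inequality is false once $x$ exceeds $2\ln 2$. I would prove it by setting $g(x)=1+x-e^{x/2}$, noting $g(0)=0$ and $g'(x)=1-\tfrac12 e^{x/2}$. Since $g''(x)=-\tfrac14 e^{x/2}<0$ the derivative $g'$ is decreasing and vanishes only at $x=2\ln 2\approx 1.386$, so $g'(x)>0$ throughout $[0,1]$; hence $g$ is increasing there and $g(x)\geq g(0)=0$ for $0\leq x\leq 1$, giving $e^{x/2}\leq 1+x$. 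Tracing the substitution $x=-\kappa\tau$ back through the reciprocal and the power $\upsilon$ then yields \eqref{inq1}. The only subtlety to watch is keeping the direction of each inequality correct through the reciprocal step and verifying that the critical point of $g$ lies to the right of $x=1$, so that monotonicity---rather than a finer estimate---already closes the argument on the whole admissible range of $\tau$.
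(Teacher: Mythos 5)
Your proof is correct and is essentially the paper's argument: both reduce, via the substitution $x=-\kappa\tau\in(0,1]$, to the scalar bounds $e^{x/2}\le 1+x\le e^{x}$ (the paper phrases them as $x-\frac{x^2}{2}\le\ln(1+x)\le x$ and normalizes by the power $1/x$ rather than reducing to $\upsilon=1$ and taking reciprocals, but the mathematical content is the same). One factual slip in a side remark: $e^{x/2}\le 1+x$ does \emph{not} become false once $x>2\ln 2$ --- that is merely where $1+x-e^{x/2}$ attains its maximum; the inequality actually persists until $x\approx 2.51$ --- but this is harmless, since your monotonicity argument only needs the critical point $2\ln 2$ to lie to the right of $x=1$.
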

\begin{proof}
For any $x\geq 0$, the direct calculation shows that
\[
   x-\frac{x^2}{2} \leq \ln(1+x) \leq x \quad \text{and} \quad (1+x)^{1/x} = e^{\frac{\ln(1+x)}{x}},
\]
  which implies $  e^{1-x/2}\leq  (1+x)^{1/x} \leq e. $
If $0\leq x\leq 1$, we further have
$e^{1/2}\leq (1+x)^{1/x} \leq e$.

Thus, setting $x = -\kappa\tau\leq 1$ and noting $(1-\kappa \tau)^{-\upsilon} = \big((1-\kappa \tau)^{-\frac{1}{\kappa \tau}}\big)^{\kappa \upsilon\tau}$,
 we immediately have the inequalities in \eqref{inq1}.
The proof is completed.
\end{proof}

\begin{theorem}\label{TH_IEode}
Let $u^n$ and $U^n$ be solutions to \eqref{EQ_ordinary} and \eqref{EQ_IEode} respectively. Then for $ -\kappa\tau <1$  it  holds
\begin{equation}
|e^n|\leq e^{\kappa t_n/2}|e^{0}|+ \Cu \Big(  C_1e^{\kappa t_n/4}\tau^\alpha+C_2t_{n-1}^{\alpha-1}\tau\Big),\label{EQ_1536}
\end{equation}
where $C_1:=\frac2{1-\alpha}+3e^{\frac{\kappa t_n}{4}}$ and
\begin{equation*}
C_2:=\Big\{\begin{array}{cc}
\frac{1}{1-\alpha}(2^{1-\alpha}(1-e^{\frac{\kappa  t_n}{4}})-1), & t_n >\frac4{\kappa}\ln(1-2^{\alpha-1}),\\
0, &t_n \leq\frac4{\kappa}\ln(1-2^{\alpha-1}).
\end{array}
\end{equation*}
\end{theorem}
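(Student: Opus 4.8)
The plan is to start from the closed-form error representation \eqref{EQ_1709}, pass to absolute values, and treat the homogeneous part and the truncation sum separately. Writing $\varrho \defeq 1-\kappa\tau>1$ (legitimate since $\kappa<0$), I get
\[
|e^n| \le \varrho^{-n}|e^0| + \tau\sum_{k=1}^{n}\varrho^{-(n+1-k)}|R^k|.
\]
The homogeneous term is immediate: Lemma \ref{Pro_1} with $\upsilon=n$ gives $\varrho^{-n}\le e^{\kappa t_n/2}$, which is exactly the first term of \eqref{EQ_1536}. For the truncation sum I would first isolate the singular index $k=1$, where \eqref{EQ_1606} only gives $|R^1|\le\tau^{\alpha-1}$; this term equals $\varrho^{-n}\tau^{\alpha}$ and, using Lemma \ref{Pro_1} together with $e^{\kappa t_n/2}\le e^{\kappa t_n/4}$, is absorbed into the $e^{\kappa t_n/4}\tau^\alpha$ part of \eqref{EQ_1536}, hence into $C_1$. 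The terminal index $k=n$, where $|R^n|\le\Cu\tau t_{n-1}^{\alpha-2}$ and $\tau t_{n-1}^{\alpha-2}\le t_{n-1}^{\alpha-1}$, I would likewise peel off and route into the $t_{n-1}^{\alpha-1}\tau$ contribution.

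The heart of the argument is the bulk sum $\tau\sum_{k=2}^{n-1}\varrho^{-(n+1-k)}|R^k|$. Here I insert $|R^k|\le\Cu\tau t_{k-1}^{\alpha-2}$ and reindex by $j=k-1$ so that the sum matches the form to which the two-scale estimate of Lemma \ref{Lemma_SCN} applies, with the identifications $\varrho=1-\kappa\tau>1$ and $\beta=\alpha-2<-1$ (so that $-1/(\beta+1)=1/(1-\alpha)$, $2^{-(\beta+1)}=2^{1-\alpha}$ and $t_{n-1}^{\beta+1}=t_{n-1}^{\alpha-1}$). Lemma \ref{Lemma_SCN} then decomposes the sum into its two natural scales: an algebraically weighted piece proportional to $(2^{1-\alpha}(1-\varrho^{-(n/2-1)})-1)\,t_{n-1}^{\alpha-1}$, and an exponentially weighted piece proportional to $\varrho^{-(n/2-2)}\tau^{\alpha-1}$.

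It remains to convert every surviving power of $\varrho$ into an exponential weight through Lemma \ref{Pro_1} and to collect terms. The exponentially weighted piece, carrying $\varrho^{-(n/2-2)}$, becomes a multiple of $e^{\kappa t_n/4}\tau^\alpha$ and is folded into $C_1$, which together with the $k=1$ and $k=n$ contributions accounts for the $\frac{2}{1-\alpha}$ and the factor $3$. The algebraically weighted piece becomes $C_2\,t_{n-1}^{\alpha-1}\tau$, and the case distinction in $C_2$ is forced by the sign of $2^{1-\alpha}(1-e^{\kappa t_n/4})-1$: this quantity vanishes precisely at $t_n=\frac{4}{\kappa}\ln(1-2^{\alpha-1})$, is negative for smaller $t_n$ (so the corresponding contribution is nonpositive and may simply be discarded, giving $C_2=0$), and positive for larger $t_n$.

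I expect the main obstacle to be the bookkeeping in this last conversion step rather than any single hard inequality. One must reindex the bulk sum so that Lemma \ref{Lemma_SCN} applies cleanly, matching the shift between $t_{k-1}$ and $t_k$ and between the exponents $n+1-k$ and $n-k-1$, and carefully disposing of the endpoint terms produced by the shift. Then, for each occurrence of $\varrho^{-\upsilon}$, one must select the correct one of the two bounds in Lemma \ref{Pro_1} so that every substitution preserves the direction of the inequality and lands exactly on the stated weights $e^{\kappa t_n/2}$, $e^{\kappa t_n/4}$ and on $C_1,C_2$. The delicate point is that $\varrho^{-\upsilon}$ enters once with a favorable sign (in the $+\tau^\alpha$ terms, where its upper bound is wanted) and once inside $1-\varrho^{-(n/2-1)}$ (where its lower bound is wanted), so the two uses of Lemma \ref{Pro_1} must be chosen consistently with the sign-based case split that defines $C_2$.
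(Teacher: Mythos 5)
Your outline follows the paper's own route (the representation \eqref{EQ_1709}, Lemma \ref{Pro_1} for the homogeneous part, Lemma \ref{Lemma_SCN} with $\varrho=1-\kappa\tau$, $\beta=\alpha-2$ for the bulk), but it contains one structural error that is fatal to the stated constants: the decision to peel off the terminal index $k=n$ and bound it by $\Cu\tau\,t_{n-1}^{\alpha-1}$ as a separate, nonnegative contribution to the algebraic part. In the paper's argument the $k=n$ summand is kept \emph{inside} the sum $S_n=\varrho^{-1}\sum_{k=2}^{n-1}\tau\varrho^{-(n-k-1)}t_k^{\alpha-2}$ to which Lemma \ref{Lemma_SCN} is applied, and this is not cosmetic: the lemma's proof is a telescoping (Abel summation) argument in which precisely that last summand produces the boundary term $-t_{n-1}^{\alpha-1}$ with a \emph{negative} sign. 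That minus sign is the origin of the ``$-1$'' in $C_2=\frac1{1-\alpha}\big(2^{1-\alpha}(1-e^{\kappa t_n/4})-1\big)$, hence of the entire case $C_2=0$ when $t_n\le\frac4\kappa\ln(1-2^{\alpha-1})$. Once you extract that term and replace it by a nonnegative bound, the cancellation is gone for good: your coefficient in front of $t_{n-1}^{\alpha-1}\tau$ contains an irreducible $+\Cu$, while the theorem's coefficient is $\Cu C_2$ with $C_2\le\frac{2^{1-\alpha}-1}{1-\alpha}<1$ for every $t_n$, and $C_2=0$ in the small-$t_n$ regime. Nor can the extra term be rerouted into the $\tau^\alpha$ part, as you suggest when you say the $k=n$ contribution helps account for the factor $3$ in $C_1$: its weight $\varrho^{-1}$ is not exponentially small in $n$, so for large $t_n$ it cannot sit under the factor $e^{\kappa t_n/4}$; note also that this claim contradicts your earlier routing of $k=n$ into the algebraic part. (In the paper the factor $3$ comes from $k=1$ and $k=2$, whose combined weight is $2-\kappa\tau\le3$.)

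The same choice is why your index bookkeeping does not close up: after removing $k=1$ and $k=n$ and reindexing $j=k-1$, your bulk is $\sum_{j=1}^{n-2}\tau\varrho^{-(n-j)}t_j^{\alpha-2}$, whereas Lemma \ref{Lemma_SCN} treats $\sum_{j=2}^{n-1}\tau\varrho^{-(n-j-1)}t_j^{\alpha-2}$; both endpoints are off. The surplus endpoint $j=1$ is harmless (since $t_1=\tau$ it is another $\tau^\alpha$-type term and goes into $C_1$; it is exactly the paper's $k=2$ contribution), but the missing endpoint $j=n-1$ is not a nuisance to be ``carefully disposed of'' --- it must remain inside the lemma. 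The repair is simply not to peel off $k=n$ at all: separate only $k=1$ and $k=2$, write the remaining tail as $\varrho^{-1}\sum_{k=2}^{n-1}\tau\varrho^{-(n-k-1)}t_k^{\alpha-2}$, apply Lemma \ref{Lemma_SCN} to that entire sum, and then your remaining steps (the two directions of Lemma \ref{Pro_1}, and the sign-based case split defining $C_2$) go through as in the paper.
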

\begin{proof}
Inserting \eqref{EQ_1606}   into  \eqref{EQ_1709}, one has
\begin{equation}
|e^n|\leq (1-\kappa\tau)^{-n}|e^{0}|+ \Cu \Big(S_n\tau + (1-\kappa\tau)^{-n}(2-\kappa\tau)\tau^\alpha\Big),\label{EQ_1903}
\end{equation}
where $S_n$ is defined by
\begin{equation*}
S_n \;=\; (1-\kappa\tau)^{-(n+1)}\sum_{k=3}^{n}\tau (1-\kappa\tau)^{k}t_{k-1}^{\alpha-2}
 \;=\; (1-\kappa\tau)^{-1}\sum_{k=2}^{n-1}\tau(1-\kappa\tau)^{-(n-k-1)}t_k^{\alpha-2}.
\end{equation*}
Taking $\varrho=1-\kappa\tau, \beta  = \alpha-2  $, it follows from  Lemma \ref{Lemma_SCN} that
\begin{align*}
S_n&\leq \frac{1}{(1-\alpha)(1-\kappa\tau)}\Big((2^{1-\alpha}(1-(1-\kappa\tau)^{-(\frac{n}{2}-1)})-1)t_n^{\alpha-1}+(1-\kappa\tau)^{-(\frac{n}{2}-2)}\tau^{\alpha-1}
\Big).
\end{align*}
Substituting the above bound of $S_n$ into \eqref{EQ_1903}, one has
\[
|e^n|\leq (1-\kappa\tau)^{-n}|e^{0}|+ \Cu \Big(  C_1(\tau,n)(1-\kappa\tau)^{-\frac{n}{2}}\tau^\alpha+C_2(\tau,n) t_{n-1}^{\alpha-1}\tau\Big),
\]
where
$C_1(\tau,n):=\frac{1-\kappa\tau}{1-\alpha}+ (2-\kappa\tau)(1-\kappa\tau)^{-\frac{n}{2}},\;
C_2(\tau,n):=\frac{1}{1-\alpha}(2^{1-\alpha}(1-(1-\kappa\tau)^{-\frac{n}{2}})-1). $

Noting $\tau\leq -1/\kappa$ and using Lemma \ref{Pro_1}, we have
\begin{align*}
C_1(\tau,n)\leq\frac2{1-\alpha}+3e^{\frac{\kappa  t_n}{4}}:=C_1,\qquad
C_2(\tau,n)\leq \frac{1}{1-\alpha}(2^{1-\alpha}(1-e^{\frac{\kappa  t_n}{4}})-1)\leq C_2.
\end{align*}
Thus, the proof is completed.
%we finally arrive at
%\begin{align}
%|e^n|\leq e^{\kappa t_n/2}|e^{0}|+ \Cu \Big(  C_1e^{\kappa t_n/4}\tau^\alpha+C_2t_{n-1}^{\alpha-1}\tau\Big),\label{EQ_1536}
%\end{align}
\end{proof}

%{

%\begin{align}
%S_n&=\frac{\lambda\tau}{1-\alpha}\Big((1-\kappa\tau)^{-(n-n_0)}\sum_{k=2}^{n_0}t_k^{\alpha-1}+t_{n_0+1}^{\alpha-1}\sum_{k=n_0+1}^{n-2}
%(1-\kappa\tau)^{-(n-k)}\Big)+\frac{(1-\kappa\tau)^{-(n-2)}}{1-\alpha}\tau_1^{\alpha-1}
%-\frac{t_{n-1}^{\alpha-1}}{1-\alpha}\\
%&\leq \frac{\lambda}{1-\alpha}\Big(e^{-\mu(t_n-t_{n_0})}\sum_{k=2}^{n_0}t_k^{\alpha-1}+t_{n_0+1}^{\alpha-1}\sum_{k=n_0+1}^{n-2}
%e^{-\mu(t_n-t_k)}\Big)+\frac{e^{-\mu t_{n-2}}}{1-\alpha}\tau_1^{\alpha-1}
%-\frac{t_{n-1}^{\alpha-1}}{1-\alpha}\\
%&\leq \frac{\lambda}{1-\alpha}\Big(e^{-\frac{\mu t_n}{2}}\frac{(\frac{t_n}{2})^\alpha-\tau_1^\alpha}{\alpha}+(\frac{t_n}{2})^{\alpha-1}\frac{e^\tau-e^{-\frac{\mu t_n}{2}}}{\mu}\Big)+\frac{e^{-\mu t_{n-2}}}{1-\alpha}\tau_1^{\alpha-1}
%-\frac{t_{n-1}^{\alpha-1}}{1-\alpha}\\
%&=\Big(\frac{\lambda e^{-\frac{\mu t_n}{2}}}{1-\alpha}(\frac{1}{2})^{\alpha-1}\big(\frac{t_n}{2\alpha}-\frac1{\mu}\big)+\frac{\frac{\lambda}{\mu} e^\tau 2^{1-\alpha}
%-1}{1-\alpha}\Big)t_n^{\alpha-1}+\frac{e^{-\mu t_{n-2}}}{1-\alpha}\tau_1^{\alpha-1}.
%\end{align}
%where the Proposition \ref{Pro_1} is used. Combing with \eqref{EQ_1903}, we have
%\begin{align}
%{|e^n|\leq e^{\kappa t_{n}/2}|e^{0}|+ \Cu \Big(C_1e^{\kappa t_{n-1}/2}\tau^{\alpha}+ C_2t_n^{\alpha-1}\tau\Big),}
%\end{align}
%where $C_1 :=(2\sqrt{e}+\frac{e}{1-\alpha})$ and $C_2:=\frac{1}{(1-\alpha)2^{\alpha-1}}\Big((\frac{\lambda t_n}{2\alpha}+2)e^{\frac{\kappa  t_n}{4}}+2e^{1/\lambda}-2^{\alpha-1}\Big)$
%}

\subsection{Decay-preserving error estimate for C-N scheme}
We now consider the decaying error estimate for C-N scheme with $\kappa<0$.  It follows from \eqref{EQ_CNode} that
\begin{equation*}
(1-\kappa\tau/2)U^n=(1+\kappa\tau/2)U^{n-1}+\tau f^{n-\frac12},
\end{equation*}
which yields
\begin{equation}
U^n=\psi_\tau^{-n}U^{0}+\tau  (1-\frac{\kappa\tau}{2})^{-1} \sum_{k=1}^n \psi_\tau^{-(n-k)}f^{k-\frac12},
\end{equation}
where $\psi_\tau :=(1-\frac{\kappa\tau}{2})/(1+\frac{\kappa \tau}{2})$.
Let $e^n = u^n-U^n$. The error satisfies
\begin{align}
e^n=\psi_\tau^{-n}e^{0}+\tau  (1-\frac{\kappa\tau}{2})^{-1}\sum_{k=1}^n \psi_\tau^{-(n-k)}(R_1^{k}+R_2^k),\label{EQ_1421}\end{align}
where the truncation errors are respectively given by
$R_1^k := \nabla_\tau U^k/\tau-u'(t_{k-\frac12}) \; \text{and} \; R_2^k := U^{k-
\frac12}-u(t_{k-\frac12}).$
The direct calculation shows that the truncations errors can be exactly expressed by
\begin{align*}
R_1^k &=\frac1{2\tau}\big(\int_{t_{k-\frac12}}^{t_k}(t_k-s)^2u'''(s)\d s+\int^{t_{k-\frac12}}_{t_{k-1}}(s-t_{k-1})^2 u'''(s) \d s\big), \\
R_2^k &=\frac1{2}\big(\int_{t_{k-\frac12}}^{t_k}(t_k-s)u''(s)\d s+\int^{t_{k-\frac12}}_{t_{k-1}}(s-t_{k-1}) u''(s) \d s\big).
\end{align*}
Using the regularity condition \eqref{Assump_ode}, one has the following error bounds
\begin{equation}
R_1^k+R_2^k\leq \Big\{\begin{array}{cc}C_{u,\alpha}\tau^{\alpha-1},& k = 1,\\
C_{u,\alpha} \tau^2 t_{k-1}^{\alpha-3},& k\geq 2.\label{EQ_1420}
\end{array}
\end{equation}
\begin{lemma}\label{Pro_2}
For any $0<-\kappa\tau\leq 1$ and $\upsilon\geq0$, it holds that
\begin{equation}e^{\frac76\kappa\upsilon \tau}\leq  \psi_\tau^{-\upsilon}\leq e^{\kappa\upsilon \tau},
\end{equation}
where $\psi_\tau =(1-\frac{\kappa\tau}{2})/(1+\frac{\kappa \tau}{2})$.
\end{lemma}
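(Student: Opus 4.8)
The plan is to mirror the proof of Lemma \ref{Pro_1}: reduce the two-sided exponential bound to a single scalar inequality in the variable $x := -\kappa\tau$, which by hypothesis lies in $(0,1]$. The case $\upsilon=0$ is trivial, since all three quantities equal $1$, so I assume $\upsilon>0$. Writing $\kappa\tau=-x$ gives $\psi_\tau=(1+x/2)/(1-x/2)>1$, and since $\psi_\tau^{-\upsilon}=e^{-\upsilon g(x)}$ with
\[
g(x):=\ln\psi_\tau=\ln(1+x/2)-\ln(1-x/2),
\]
the claimed bounds $e^{\frac76\kappa\upsilon\tau}\le\psi_\tau^{-\upsilon}\le e^{\kappa\upsilon\tau}$ become, after taking logarithms and dividing by the positive number $\upsilon$ (which reverses the order of the inequalities), equivalent to the purely scalar statement
\[
x\le g(x)\le \tfrac76\,x,\qquad x\in(0,1].
\]

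For the lower bound $g(x)\ge x$, which delivers the upper exponential estimate $\psi_\tau^{-\upsilon}\le e^{\kappa\upsilon\tau}$, I would differentiate once. A direct computation gives
\[
g'(x)=\frac{1}{2+x}+\frac{1}{2-x}=\frac{4}{4-x^2}\ge 1\qquad\text{for }x\in[0,1],
\]
and since $g(0)=0$, integrating from $0$ to $x$ yields $g(x)\ge x$ at once.

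The upper bound $g(x)\le\frac76 x$, which delivers the lower exponential estimate, is the more delicate half, because the crude pointwise bound $g'(x)\le g'(1)=\frac43=\frac86$ gives only the weaker constant $\frac43$. Instead I would exploit convexity: from $g''(x)=\frac{8x}{(4-x^2)^2}\ge 0$ on $[0,1]$, the function $g$ is convex there, and with $g(0)=0$ the chord inequality gives $g(x)\le x\,g(1)=x\ln 3$ for every $x\in[0,1]$. It then only remains to verify the elementary numerical fact $\ln 3\le\frac76$, i.e. $3\le e^{7/6}$, which closes the argument.

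The main obstacle is precisely this last sharp step: the constant $\frac76$ is not arbitrary slack but is forced by $\ln 3\le\frac76$, so the proof must avoid estimating the \emph{maximal} slope of $g$ on $[0,1]$ and instead use the global convexity/chord bound to capture its \emph{average} slope over the interval. Everything else---the substitution $x=-\kappa\tau$, the logarithmic reduction, and the lower bound via $g'\ge 1$---is routine.
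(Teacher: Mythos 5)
Your proof is correct. Both halves of the reduction are sound: with $x=-\kappa\tau\in(0,1]$ and $g(x)=\ln\frac{1+x/2}{1-x/2}$, the lemma is exactly the scalar statement $x\le g(x)\le\frac76 x$; your lower bound follows from $g'(x)=\frac{4}{4-x^2}\ge 1$ and $g(0)=0$, and your upper bound from convexity ($g''(x)=\frac{8x}{(4-x^2)^2}\ge0$), the chord inequality $g(x)\le x\,g(1)=x\ln 3$, and the numerical fact $\ln 3\le \frac76$. The paper reaches the same scalar estimate by a different device, in the style of its Lemma \ref{Pro_1}: it writes the exact integral-remainder identity $\bigl(\frac{1-y}{1+y}\bigr)^{1/y}=e^{-2-\frac1y\int_0^y \frac{4s(y-s)}{(1-s^2)^2}\,\mathrm{d}s}$ with $y=-\kappa\tau/2\in(0,\frac12]$, bounds the remainder integral by $\frac13$, obtains the sandwich $e^{-7/3}\le\bigl(\frac{1-y}{1+y}\bigr)^{1/y}\le e^{-2}$, and then raises to the nonnegative power $-\kappa\upsilon\tau/2$. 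So the overall strategy (a two-sided linear bound on $\ln\psi_\tau$ in $-\kappa\tau$) is shared, but the key inequality is established differently: explicit remainder computation in the paper versus monotonicity of $g'$ plus a convexity/chord argument in your proposal. Your route is arguably cleaner and, as you note, pins down where the constant comes from: since $g$ is convex with $g(0)=0$, the ratio $g(x)/x$ is nondecreasing, so the optimal constant on $(0,1]$ is exactly $\ln 3$, of which $\frac76$ is a convenient rational majorant; the paper's remainder bound arrives at the same $\frac76$ through the slightly cruder per-unit estimate $\frac73$ in the variable $y$.
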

\begin{proof}
Noting that $
    (1+x)^{\frac1x} = e^{\frac{\ln(1+x)}{x}}=e^{1-\frac1{x}\int_0^x \frac{(x-s)}{(1+s)^2}\d s}\; \text{and} \;(1-x)^{\frac1x} = e^{-1-\frac1{x}\int_0^x \frac{(x-s)}{(1-s)^2}\d s}, \forall 0<x< 1,$
 we then have
$ \big(\frac{1-x}{1+x}\big)^{\frac1{x}} = e^{-2-\frac1{x}\int_0^x \frac{4s(x-s)}{(1-s^2)^2}\d s}.$
For any $0<x\leq \frac12$, note that \begin{align*}
\frac1{x}\int_0^x \frac{4s(x-s)}{(1-s^2)^2}\d s &\leq \frac{4}{(1-x^2)^2x}\int_0^x s(x-s)\d s  = \frac2{3(\frac1{x}-x)^2}<\frac13,
\end{align*}
 we further have
    \begin{align}\label{id2}
  e^{-\frac73}\leq \Big(\frac{1-x}{1+x}\Big)^{\frac1{x}}\leq e^{-2}.
   \end{align}
By the definition of
  \begin{equation}
  \psi_\tau^{-\upsilon} = \Big(\big(\frac{1+\frac{\kappa \tau}{2}}{1-\frac{\kappa\tau}{2}}\big)^{-\frac2{\kappa \tau}}\Big)^{-\frac{\kappa \upsilon \tau}{2}},
  \end{equation}
 we have $e^{\frac76\kappa\upsilon \tau}\leq \psi_\tau^{-\upsilon}\leq e^{\kappa\upsilon \tau}$ in analogy of \eqref{id2}.
The proof is completed.
\end{proof}

\begin{theorem}\label{TH_CNode}
Let $u^n$ and $U^n$ be solutions to \eqref{EQ_ordinary} and \eqref{EQ_CNode} respectively. Then for $ 0<-\kappa\tau <1$  it holds
\begin{align}
|e^n|\leq e^{\kappa t_n}|e^{0}|+ C_{u,\alpha} \Big(C_3e^{\frac{\kappa  t_n}{2}}\tau^\alpha+C_4t_n^{\alpha-2}\tau^2\Big),\label{EQ_0114}
\end{align}
where $C_3:=\frac{9}{2-\alpha}+12e^{\frac{\kappa  t_n}{2}}$ and
\begin{align*}
C_4:=\Big\{\begin{array}{cc} \frac1{2-\alpha}(2^{2-\alpha}(1-e^{\frac{7\kappa t_n}{12}})-1), & t_n> \frac{12}{7\kappa}\ln(1-2^{\alpha-2}),\\
0, & t_n\leq  \frac{12}{7\kappa}\ln(1-2^{\alpha-2}).
\end{array}
\end{align*}
\end{theorem}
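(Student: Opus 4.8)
The plan is to mirror the proof of Theorem~\ref{TH_IEode}, replacing the amplification factor $(1-\kappa\tau)^{-1}$ by $\psi_\tau$ and starting from the C-N error identity~\eqref{EQ_1421} together with the truncation bounds~\eqref{EQ_1420}. For the homogeneous part, the upper estimate $\psi_\tau^{-n}\le e^{\kappa t_n}$ of Lemma~\ref{Pro_2} gives $|\psi_\tau^{-n}e^0|\le e^{\kappa t_n}|e^0|$, which is already the first term of~\eqref{EQ_0114}. Because $\kappa<0$ forces $(1-\tfrac{\kappa\tau}{2})^{-1}<1$, the convolution sum is dominated by $\tau\sum_{k=1}^n\psi_\tau^{-(n-k)}(R_1^k+R_2^k)$, which I would split into the two boundary indices $k=1,2$ and the tail $k\ge 3$, exactly as the index $k=3$ cutoff in the implicit Euler argument.

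For the two boundary terms I would insert~\eqref{EQ_1420} and use $t_1=\tau$ from~\eqref{grid}: the $k=1$ term gives $\Cu\psi_\tau^{-(n-1)}\tau^\alpha$ and, since $\tau^2 t_1^{\alpha-3}=\tau^{\alpha-1}$, the $k=2$ term gives $\Cu\psi_\tau^{-(n-2)}\tau^\alpha$. Writing $\psi_\tau^{-(n-1)}=\psi_\tau\,\psi_\tau^{-n}$, $\psi_\tau^{-(n-2)}=\psi_\tau^2\,\psi_\tau^{-n}$ and using $1<\psi_\tau<3$ with $\psi_\tau^{-n}\le e^{\kappa t_n}$, their sum is at most $(\psi_\tau+\psi_\tau^2)\Cu e^{\kappa t_n}\tau^\alpha\le 12\,\Cu e^{\kappa t_n}\tau^\alpha$; this is the $12\,e^{\kappa t_n/2}$ part of $C_3$ once a factor $e^{\kappa t_n/2}$ is pulled out.

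For the tail I would perform the index shift $j=k-1$ and recognize that $\sum_{k=3}^n\psi_\tau^{-(n-k)}\tau^2 t_{k-1}^{\alpha-3}=\tau I_n$ in the notation of Lemma~\ref{Lemma_SCN} with $\varrho=\psi_\tau>1$ and $\beta=\alpha-3<-1$. Applying that two-scale estimate (here $\beta+1=\alpha-2$) separates this tail into an exponentially weighted piece carrying $\psi_\tau^{-(n/2-2)}\tau^{\alpha-2}$ and an algebraically weighted piece carrying $\big(2^{2-\alpha}(1-\psi_\tau^{-(n/2-1)})-1\big)t_{n-1}^{\alpha-2}$, both over $2-\alpha$; restoring the prefactor $\tau^2$ turns the former into a $\tau^\alpha$ contribution and the latter into the target $t_n^{\alpha-2}\tau^2$ contribution, where passing from $t_{n-1}^{\alpha-2}$ to $t_n^{\alpha-2}$ is justified at the pre-simplification stage of Lemma~\ref{Lemma_SCN} by $-t_{n-1}^{\alpha-2}\le -t_n^{\alpha-2}$ (valid since $\alpha-2<0$).

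The delicate step, and the main obstacle, is to convert the surviving $\psi_\tau$-powers into the clean exponentials of~\eqref{EQ_0114} by invoking Lemma~\ref{Pro_2} \emph{in the correct direction} in each place. For the leading coefficient I would use the upper bound $\psi_\tau^{-(n/2-2)}\le e^{\kappa(n/2-2)\tau}=e^{\kappa t_n/2}e^{-2\kappa\tau}\le e^2 e^{\kappa t_n/2}\le 9\,e^{\kappa t_n/2}$, so that combining with the boundary terms the whole $\tau^\alpha$-coefficient collapses to $\Cu e^{\kappa t_n/2}\big(\tfrac{9}{2-\alpha}+12 e^{\kappa t_n/2}\big)=\Cu C_3 e^{\kappa t_n/2}$. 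For the algebraic factor I would instead use the lower bound $\psi_\tau^{-(n/2-1)}\ge e^{\frac76\kappa(n/2-1)\tau}\ge e^{\frac{7}{12}\kappa t_n}$, so that $2^{2-\alpha}(1-\psi_\tau^{-(n/2-1)})-1\le 2^{2-\alpha}(1-e^{\frac{7}{12}\kappa t_n})-1=(2-\alpha)C_4$. Finally this algebraic bound is retained only when it is nonnegative, i.e.\ when $e^{\frac{7}{12}\kappa t_n}<1-2^{\alpha-2}$, equivalently $t_n>\frac{12}{7\kappa}\ln(1-2^{\alpha-2})$; otherwise the term is negative and is simply discarded, forcing $C_4=0$. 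This dichotomy reproduces the two-case definition of $C_4$, and assembling the three contributions yields~\eqref{EQ_0114}.
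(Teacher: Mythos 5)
Your proposal is correct and follows essentially the same route as the paper's proof: insert the truncation bounds \eqref{EQ_1420} into the error identity \eqref{EQ_1421}, split off the $k=1,2$ boundary contributions (which combine to $(\psi_\tau+1)\psi_\tau^{-(n-1)}\tau^\alpha\le 12e^{\kappa t_n}\tau^\alpha$), estimate the $k\ge 3$ tail via Lemma \ref{Lemma_SCN} with $\varrho=\psi_\tau$, $\beta=\alpha-3$, and convert the surviving $\psi_\tau$-powers using Lemma \ref{Pro_2} in the correct direction (upper bound $e^{\kappa\upsilon\tau}$ on the positive pieces, lower bound $e^{\frac76\kappa\upsilon\tau}$ inside $1-\psi_\tau^{-(n/2-1)}$). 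Your explicit justification of the passage from $t_{n-1}^{\alpha-2}$ to $t_n^{\alpha-2}$ via the pre-simplification stage of Lemma \ref{Lemma_SCN}, using $-t_{n-1}^{\alpha-2}\le -t_n^{\alpha-2}$, is in fact more careful than the paper, which states \eqref{id3} with $t_n^{\alpha-2}$ even though the lemma as written yields $t_{n-1}^{\alpha-2}$.
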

\begin{proof} Inserting the truncation error bounds \eqref{EQ_1420} into \eqref{EQ_1421}, one has
\begin{equation}
|e^n| \;\leq \; \psi_\tau^{n}|e^{0}|+ C_{u,\alpha} \Big(S_n\tau^2 + (\psi_\tau+1)\psi_\tau^{-(n-1)}\tau^\alpha\Big),\label{EQ_0030}
\end{equation}
where \begin{equation}
S_n  \;=\; \sum_{k=3}^{n}\tau \psi_\tau^{-(n-k)}t_{k-1}^{\alpha-3} \;=\;  \sum_{k=2}^{n-1}\tau \psi_\tau^{-(n-k-1)}t_{k}^{\alpha-3}.\label{EQ_2331}
\end{equation}
The case of $n\leq 3$ can be easily estimated and here we only assume $n\geq 4$.

For $0<-\kappa\tau<1$, taking $\varrho = \psi_\tau$ and $\beta = \alpha -3$ in Lemma \ref{Lemma_SCN}, and applying Lemma \ref{Pro_2}  to \eqref{EQ_2331}, we have
\begin{equation} \label{id3}
S_n \; \leq \;\frac{1}{2-\alpha}\Big((2^{2-\alpha}(1-e^{\frac{7\kappa t_n}{12}})-1)t_n^{\alpha-2}+9e^{\frac{\kappa  t_n}{2}}\tau^{\alpha-2}\Big),
\end{equation}
where $\psi_\tau \leq 3$ is used. Again, from Lemma \eqref{Pro_2} and the assumption $-\kappa\tau<1$, we have
\begin{equation}
(\psi_\tau+1)\psi_\tau^{-(n-1)}\leq 12e^{\kappa t_n}.\label{EQ_1717}
\end{equation}
The proof is completed by inserting \eqref{id3} and \eqref{EQ_1717} into  \eqref{EQ_0030}.
\end{proof}
\subsection{Decay-preserving  error estimate for BDF2 scheme}
We now consider the decaying error estimate for BDF2 scheme with $\kappa<0$. From \eqref{EQ_BDF2ode}, one has
\begin{align}
U^1-U^{0}&=\kappa \tau U^{1}+\tau f^{1}, \label{BDF1}\\
\frac32U^n-2U^{n-1}+\frac12U^{n-2}&=\kappa\tau U^{n}+\tau f^{n}, \quad n\geq 2.
\label{BDF2}
\end{align}
Set $F^k  = f^k(k\geq 3)$ and $F^2 = f^2-\frac{U^0}{2\tau} ,\;F^1 = f^1+\frac{U^0}{\tau}$. We further introduce the following notations, say the BDF2 kernels, as $A^{(1)}_0 =1-\kappa\tau$ and for $n\geq 2$
\begin{align}
A^{(n)}_0 =\frac32-\kappa\tau, \quad  A^{(n)}_1 = -2,\quad  A^{(n)}_2 = \frac12,\quad A^{(n)}_j = 0 \;( 3\leq j <n).
\label{EQ_BDF2kernels}
\end{align}
 Thus, BDF2 scheme \eqref{BDF1}-\eqref{BDF2} can be reformulated into the following convolution form
\begin{equation*}
\sum_{j = 1}^k A^{(n)}_{n-k} U^j = \tau F^{k}.
\end{equation*}
Set the error $e^n = u^n-U^n$. The error satisfies the {governing} equation as
\begin{equation}
\sum_{j = 1}^k A^{(k)}_{k-j} e^j = \tau R^{k},\label{EQ_CF}
\end{equation}
where
\begin{align}
R^1 = \eta^1+e^0/\tau,\; R^2 = \eta^2- e^0/(2\tau) \quad \text{and} \quad R^k  = \eta^k\; (k\geq 3).\label{EQ_RRR}
\end{align}
 Under the regularity condition \eqref{Assump_ode},  as discussed in \cite{ZZ21}, the truncation error $\eta^k:=D_2 u^k-u_t(t_k)$ satisfies
 \begin{equation}
 |\eta^k| \leq C_{u,\alpha}\tau^{\alpha-1}  \; ( k=1,2)  \quad \text{ and } \quad |\eta^k| \leq C_{u,\alpha} t_{k-2}^{\alpha-3}\tau^2,\quad\; k\geq 3.\label{EQ_0018}
 \end{equation}
The decay-preserving error estimate of BDF2 scheme is given as follows.
\begin{theorem}\label{TH_BDF2ode}
Let $u^n$ and $U^n$ be solutions to \eqref{EQ_ordinary} and \eqref{EQ_BDF2ode}, respectively. It holds  for $ 0<-4\kappa\tau <1$ that
\begin{align}
|e^n|\leq e^{\frac{\kappa  t_n}{2}}|e^{0}|+ 2C_{u,\alpha} \Big(C_1e^{\frac{\kappa  t_n}{2}}\tau^\alpha+C_2t_{n-1}^{\alpha-2}\tau^2\Big),\label{EQ_0026}
\end{align}
where $C_5:=\frac{2}{2-\alpha}+4e^{\frac{\kappa  t_n}{4}}$ and
\begin{align*}
C_6:=\Big\{\begin{array}{cc} \frac1{2-\alpha}(2^{2-\alpha}(1-e^{\frac{\kappa  t_n}{2}})-1)& t_n> \frac{2}{\kappa}\ln(1-2^{\alpha-2}),\\
0& t_n\leq  \frac{2}{\kappa}\ln(1-2^{\alpha-2}).
\end{array}
\end{align*}
\end{theorem}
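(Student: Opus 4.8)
The plan is to follow the same three-stage template used for the implicit Euler and C-N schemes: invert the error equation \eqref{EQ_CF} into an explicit discrete convolution against the truncation errors, bound the resulting discrete resolvent by a single decaying geometric factor, and then feed the sum into the two-scale estimate of Lemma \ref{Lemma_SCN}. The only genuinely new feature is that the BDF2 recursion in \eqref{EQ_CF}--\eqref{EQ_BDF2kernels} is a two-step scheme, so its resolvent is governed by a second-order constant-coefficient recursion rather than by a single geometric ratio as in \eqref{EQ_1709} and \eqref{EQ_1421}.

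First I would solve \eqref{EQ_CF}. For $n\ge 2$ the kernel $A^{(n)}$ is constant, equal to $(\tfrac32-\kappa\tau,-2,\tfrac12)$, so the homogeneous recursion has characteristic polynomial $(\tfrac32-\kappa\tau)\lambda^2-2\lambda+\tfrac12=0$. Under the hypothesis $-4\kappa\tau<1$ one has $1+2\kappa\tau>\tfrac12>0$, hence two distinct real roots $\lambda_\pm=\frac{2\pm\sqrt{1+2\kappa\tau}}{3-2\kappa\tau}$ with $0<\lambda_-<\lambda_+<1$. Writing the discrete resolvent (convolution inverse of $A$) in closed form as $B_j=\frac{\lambda_+^{\,j+1}-\lambda_-^{\,j+1}}{(\tfrac32-\kappa\tau)(\lambda_+-\lambda_-)}$ gives the representation $e^n=\tau\sum_{k=1}^n B_{n-k}R^k$, with $R^k$ from \eqref{EQ_RRR}. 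Since $0<\lambda_-<\lambda_+$ and $(\tfrac32-\kappa\tau)(\lambda_+-\lambda_-)=\sqrt{1+2\kappa\tau}>1/\sqrt2$, the secondary root only accelerates decay, so I would bound $0\le B_j\le C\lambda_+^{\,j}$ with an explicit constant $C\le\sqrt2\,\lambda_+$, reducing everything to sums of the single geometric factor $\lambda_+$.

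The second ingredient is a comparison lemma for $\lambda_+$ in the spirit of Lemmas \ref{Pro_1} and \ref{Pro_2}: expanding $\lambda_+=1+\kappa\tau+\tfrac12(\kappa\tau)^2+\cdots$ and comparing with the exponential yields two-sided bounds $e^{c_1\kappa\upsilon\tau}\le \lambda_+^{\,\upsilon}\le e^{\kappa\upsilon\tau/2}$ valid on $-4\kappa\tau<1$. The upper bound $\lambda_+^{\,n}\le e^{\kappa t_n/2}$ produces the decay factors $e^{\kappa t_n/2}$ in front of $|e^0|$ and of the $\tau^\alpha$ term, while the lower bound controls the factors $1-\lambda_+^{-(n/2-1)}$ that appear after Lemma \ref{Lemma_SCN}, giving the exponential inside $C_6$.

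Finally I would assemble the estimate exactly as in the C-N proof \eqref{EQ_0030}--\eqref{id3}. The contributions $k=1,2$ carry the initial error $e^0$ through \eqref{EQ_RRR} and the low-order truncation bound $|\eta^k|\le C_{u,\alpha}\tau^{\alpha-1}$ from \eqref{EQ_0018}; after the comparison lemma these yield $e^{\kappa t_n/2}|e^0|$ together with the $C_5 e^{\kappa t_n/2}\tau^\alpha$ term. The tail $\sum_{k\ge3}\tau\lambda_+^{-(n-k)}t_{k-2}^{\alpha-3}\tau^2$ is cast into the form $I_n$ of \eqref{EQ_1347} with $\varrho=1/\lambda_+>1$ and $\beta=\alpha-3$; Lemma \ref{Lemma_SCN} then splits it into an exponentially weighted leading piece (folded into the $\tau^\alpha$ term) and an algebraically weighted high-order piece $C_6 t_{n-1}^{\alpha-2}\tau^2$, whence \eqref{EQ_0026}. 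I expect the main obstacle to be the resolvent step: establishing the clean bound $B_j\le C\lambda_+^{\,j}$ with the right constant, checking that the faster root $\lambda_-$ and the non-uniform first step (the implicit Euler start-up \eqref{BDF1} used for $U^1$) do not break the convolution structure, and making the $\lambda_+$-comparison tight enough that the decay exponents emerge as $\kappa t_n/2$ rather than something weaker.
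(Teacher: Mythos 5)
Your proposal is correct and is essentially the paper's own argument in different language: the ``discrete resolvent'' $B_j=\frac{\lambda_+^{j+1}-\lambda_-^{j+1}}{(\frac32-\kappa\tau)(\lambda_+-\lambda_-)}$ you construct is exactly the paper's DOC kernel $\theta^{(n)}_{n-k}$ from Lemma \ref{Lemma_theta} (note $\lambda_\pm=(2\mp\sqrt{1+2\kappa\tau})^{-1}$ and $(\frac32-\kappa\tau)(\lambda_+-\lambda_-)=\sqrt{1+2\kappa\tau}$), including the modified first-step weight coming from the implicit Euler start-up. The subsequent steps also coincide: the paper bounds the kernel by $2(1-\kappa\tau)^{-(n-k+1)}$ (your $B_j\le\sqrt2\,\lambda_+^{\,j+1}$ plus $\lambda_+\le(1-\kappa\tau)^{-1}$), reuses Lemma \ref{Pro_1} for the exponential comparison rather than proving a fresh one for $\lambda_+$, and concludes with Lemma \ref{Lemma_SCN} exactly as you describe.
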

{For brevity, we {defer} the proofs of error estimates for {the} BDF2 scheme to section 5.1, and  also {postpone}  the proofs of the {subsequent}  error estimates for PDEs  to section 5.2. }

\section{{ Decay-preserving  error estimates for  PDEs: Main results}\label{Sec_3}}
%The decaying error estimates of ODEs with singularity for three kinds of numerical schemes are well established.
We now consider error estimates of  the three schemes for {the} diffusion equation \eqref{EQ_exact}, {assuming that} the solution to \eqref{EQ_exact} satisfies  {the} regularity condition \eqref{EQ_Fregularity}.  To  {proceed}, we introduce the  associate eigenvalue problem
\begin{align}
-\Delta \omega = \lambda \omega \quad \text{with} \quad \omega = 0 \;\text{on}\; \partial \Omega.\label{EQ_eigen}
\end{align}
This eigenvalue problem admits a  sequence of eigenvalues $\{\lambda_k\}_{k=1}^\infty$ and a corresponding sequence of eigenfunctions $\{\omega_k\}_{k=1}^\infty$.  As {demonstrated} in \cite[Chapter 3]{G06}, the eigenvalues $\{\lambda_k\}_{k=1}^\infty$  are nondecreasing, positive and  tend to infinity  as $k\to \infty$. The eigenfunctions $\{\omega_k\}_{k=1}^\infty$ form an orthonormal basis in $L^2(\Omega)$ such that  {any}  $v \in L^2(\Omega)$ {can be represented as follows:} 
\begin{align}
v = \sum_{k=1}^\infty (v,\omega_k)\omega_k.\label{EQ_basis}
\end{align}
Taking $\Omega = (0,L)$ for an example, the eigenvalues and {corresponding eigenvectors are} given as
\begin{equation}
\lambda_k = (k\pi/L)^2, \quad \omega_k(x) = \sin(k\pi x/L), \qquad k = 1,2,\cdots.
\end{equation}

\subsection{Decay-preserving  error estimate of the implicit Euler scheme}
The implicit Euler semi-discrete scheme is given by
\begin{equation}\label{EQ_pdeIE}
\begin{aligned}
\frac{1}{\tau} \nabla_\tau U^n&= \Delta U^n+\kappa U^n+f^n,&&x\in \Omega, \;\;\;1\leq n\leq N\\
U^0(x) &= u_0(x),&&x\in\Omega\\
U^n(x)&=0, &&x\in\partial \Omega, \;1\leq n\leq N.
\end{aligned}
\end{equation}
Denote the error $e^n:=u^n-U^n\; (0\leq n\leq N)$.
The error $e^n$ satisfies
\begin{equation}\label{EQ_pdeIEerror}
\begin{aligned}
\frac{1}{\tau}\nabla_\tau e^n &= \Delta e^n+\kappa e^n +R^n,&&x\in \Omega, \;\;\;1\leq n\leq N\\
e^0(x) &= 0,&&x\in \Omega\\
e^n(x)&=0, &&x\in \partial \Omega, \;1\leq n\leq N.
\end{aligned}
\end{equation}
where  $R^n = -\frac1{\tau_n}\int_{t_{n-1}}^{t_n}(s-t_{n-1})u_{tt}(s)\d s$.

\begin{theorem}\label{TH_IEpde}
Let $u^n$ {and}  $U^n$ be the solutions of \eqref{EQ_exact} and \eqref{EQ_pdeIE}{, }respectively. Let $\lambda_1$ be the minimal eigenvalue of {the} eigenvalue problem \eqref{EQ_eigen}{, and let $\kappa<\lambda_1$. Then,} for $ \tau <1/(\lambda_1-\kappa)$, it holds that
\begin{align}
\|e^n\|\leq e^{-(\lambda_1-\kappa) t_n/2}\|e^{0}\|+ C_{u,\alpha} \Big(  C_7e^{-(\lambda_1-\kappa) t_n/4}\tau^\alpha+C_8t_{n-1}^{\alpha-1}\tau\Big),
\end{align}
where $C_7:=\frac2{1-\alpha}+3e^{-\frac{(\lambda_1-\kappa) t_n}{4}}$ and
\begin{align*}
C_8:=\Big\{\begin{array}{cc}
\frac{1}{1-\alpha}(2^{1-\alpha}(1-e^{-\frac{(\lambda_1-\kappa) t_n}{4}})-1), & t_n >-\frac4{\lambda_1-\kappa}\ln(1-2^{\alpha-1}),\\
0, &t_n \leq-\frac4{\lambda_1-\kappa}\ln(1-2^{\alpha-1}).
\end{array}
\end{align*}
\end{theorem}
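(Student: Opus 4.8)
The plan is to reduce the PDE error estimate to the scalar ODE estimate already established in Theorem \ref{TH_IEode}, by diagonalizing the spatial operator through the orthonormal eigenbasis $\{\omega_k\}_{k=1}^\infty$ of \eqref{EQ_eigen}. First I would expand both the error and the truncation error in this basis, writing $e^n=\sum_k e_k^n\omega_k$ and $R^n=\sum_k R_k^n\omega_k$ with $e_k^n=(e^n,\omega_k)$ and $R_k^n=(R^n,\omega_k)$, as in \eqref{EQ_basis}. Testing the discrete error equation \eqref{EQ_pdeIEerror} against $\omega_k$ and using the self-adjointness of $\Delta$ under the homogeneous Dirichlet condition together with $-\Delta\omega_k=\lambda_k\omega_k$, the term $(\Delta e^n,\omega_k)$ collapses to $-\lambda_k e_k^n$. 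This decouples the system into the independent scalar recurrences $\tau^{-1}\nabla_\tau e_k^n=(\kappa-\lambda_k)e_k^n+R_k^n$, each of which is precisely the implicit Euler scheme \eqref{EQ_IEode} for the ODE \eqref{EQ_ordinary} with reaction coefficient $\tilde\kappa_k:=\kappa-\lambda_k$. Since the eigenvalues satisfy $\lambda_k\geq\lambda_1>\kappa$, every $\tilde\kappa_k$ is negative, which is exactly the regime treated in Theorem \ref{TH_IEode}.

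Second, I would write each mode in the closed form \eqref{EQ_1709}, namely $e_k^n=(1+(\lambda_k-\kappa)\tau)^{-n}e_k^0+\tau\sum_{j=1}^n(1+(\lambda_k-\kappa)\tau)^{-(n+1-j)}R_k^j$, using $1-\tilde\kappa_k\tau=1+(\lambda_k-\kappa)\tau$. The crucial observation is a monotonicity in the amplification factor: because $1+(\lambda_k-\kappa)\tau\geq 1+(\lambda_1-\kappa)\tau>1$, every positive power obeys $(1+(\lambda_k-\kappa)\tau)^{-m}\leq(1+(\lambda_1-\kappa)\tau)^{-m}$. Majorizing each mode-dependent factor by the slowest-mode factor and then applying Minkowski's inequality to the Parseval identity $\|e^n\|=(\sum_k|e_k^n|^2)^{1/2}$ collapses the modal estimates into the single scalar inequality $\|e^n\|\leq(1+(\lambda_1-\kappa)\tau)^{-n}\|e^0\|+\tau\sum_{j=1}^n(1+(\lambda_1-\kappa)\tau)^{-(n+1-j)}\|R^j\|$. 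This is identical in form to \eqref{EQ_1709} with $\kappa$ replaced by $\kappa-\lambda_1$ and $|\cdot|$ replaced by $\|\cdot\|$.

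Third, I would supply the truncation bounds. From $R^n=-\tau^{-1}\int_{t_{n-1}}^{t_n}(s-t_{n-1})u_{tt}(s)\,ds$ and the higher-order regularity $\|u_{tt}(s)\|\leq C_{u,\alpha}s^{\alpha-2}$ consistent with \eqref{EQ_Fregularity}, one obtains the $L^2$ analogue of \eqref{EQ_1606}, namely $\|R^1\|\leq\tau^{\alpha-1}$ and $\|R^k\|\leq C_{u,\alpha}\tau t_{k-1}^{\alpha-2}$ for $k\geq 2$. Inserting these into the scalar inequality above reproduces verbatim the starting estimate \eqref{EQ_1903} of the proof of Theorem \ref{TH_IEode}, now with $\kappa$ replaced by $\kappa-\lambda_1$. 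From there I would run that proof unchanged: Lemma \ref{Lemma_SCN} with $\varrho=1+(\lambda_1-\kappa)\tau$ and $\beta=\alpha-2$ controls the convolution sum, and Lemma \ref{Pro_1} converts the geometric factors into the exponentials $e^{-(\lambda_1-\kappa)t_n/2}$ and $e^{-(\lambda_1-\kappa)t_n/4}$, producing exactly the constants $C_7,C_8$ by substituting $\kappa\rightsquigarrow\kappa-\lambda_1$ into $C_1,C_2$.

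The hard part is conceptual rather than computational: the hypothesis $\tau<1/(\lambda_1-\kappa)$ guarantees $(\lambda_k-\kappa)\tau<1$ only for the slowest mode $k=1$, while $\lambda_k\to\infty$ forces $(\lambda_k-\kappa)\tau>1$ for all large $k$, so Theorem \ref{TH_IEode} cannot legitimately be invoked mode-by-mode. The monotonicity step is what resolves this: bounding every amplification factor by that of the first mode reduces the whole problem to a single scalar one with coefficient $\kappa-\lambda_1$, for which the one smallness condition $\tau<1/(\lambda_1-\kappa)$ is precisely what Theorem \ref{TH_IEode} needs. The two points demanding care are verifying that this majorization survives inside the $\ell^2$ sum via Minkowski's inequality, and confirming the second-order temporal regularity in $L^2(\Omega)$ required for the truncation bounds.
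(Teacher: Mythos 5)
Your proposal is correct and follows essentially the same route as the paper's proof: eigenfunction expansion via \eqref{EQ_basis}, reduction to the slowest mode using $\lambda_k\geq\lambda_1$, an $\ell^2$ triangle/Minkowski argument (the paper's Lemma \ref{Lemma_abc}) to recover the scalar inequality $\|e^n\|\leq(1+(\lambda_1-\kappa)\tau)^{-n}\|e^0\|+\tau\sum_{j}(1+(\lambda_1-\kappa)\tau)^{-(n+1-j)}\hat R^j$, and then the machinery of Theorem \ref{TH_IEode} with $\kappa$ replaced by $\kappa-\lambda_1$. The only cosmetic difference is ordering — the paper majorizes the amplification factor in the one-step recursion and then iterates, whereas you iterate each mode to closed form and then majorize — and your explicit observation that $\tau<1/(\lambda_1-\kappa)$ fails to control the high modes individually (so the majorization, not a mode-by-mode invocation of Theorem \ref{TH_IEode}, is what makes the argument work) is exactly the point the paper handles implicitly.
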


\subsection{Decaying error estimate of the C-N semi-discrete scheme}
The C-N semi-discrete scheme is given by
\begin{equation}\label{EQ_pdeCN}
\begin{aligned}
\frac{1}{\tau}\nabla_\tau U^n &= \Delta U^{n-\frac1{2}}+\kappa U^{n-\frac12}+f(t_{n-\frac12}),&& x\in\Omega,\;\;\;1\leq n\leq N,\\
U^0(x) &= u_0(x),&&x\in \Omega,\\
U^n(x)&=0,&&x\in\partial \Omega, \;1\leq n\leq N.
\end{aligned}
\end{equation}
%Denote the error $e^n:=u^n-U^n(0\leq n\leq N)$.
The error $e^n$ satisfies
\begin{equation}\label{EQ_0100}
\begin{aligned}
\frac{1}{\tau} \nabla_\tau e^n &= \Delta e^{n-\frac1{2}}+\kappa e^{n-\frac12} +R_1^n+R_2^n,&& x\in\Omega,\;\;\;1\leq n\leq N,\\
e^0(x) &= u_0(x),&&x\in \Omega,\\
e^n(x)&=0,&&x\in \partial \Omega,\;1\leq n\leq N,
\end{aligned}
\end{equation}
where
\begin{align}
R_1^n &:= \frac{u^n-u^{n-1}}{\tau}-u'(t_{n-\frac12})=\frac1{2\tau}\big(\int_{t_{n-\frac12}}^{t_n}(t_n-s)^2\partial_{ttt}u(s)\d s+\int^{t_{n-\frac12}}_{t_{n-1}}(s-t_{n-1})^2 \partial_{ttt}u(s)\d s\big)\nonumber\\
&\leq \frac1{2\tau}\big(\int_{t_{n-\frac12}}^{t_n}(t_n-s)^2\|\partial_{ttt}u(s)\|\d s+\int^{t_{n-\frac12}}_{t_{n-1}}(s-t_{n-1})^2 \|\partial_{ttt}u(s)\|\d s\big):=\hat{R}^n_1,\label{EQ_2231}
\end{align}
and
\begin{align}
R_2^n &:= \frac{u^n+u^{n-1}}{2}-u(t_{n-\frac12}) =\frac1{2}\big(\int_{t_{n-\frac12}}^{t_n}(t_n-s)\partial_{tt}u(s)\d s+\int^{t_{n-\frac12}}_{t_{n-1}}(s-t_{n-1}) \partial_{tt}u(s)\d s\big)\nonumber\\
&\leq \frac1{2}\big(\int_{t_{n-\frac12}}^{t_n}(t_n-s)\|\partial_{tt}u(s)\|\d s+\int^{t_{n-\frac12}}_{t_{n-1}}(s-t_{n-1}) \|\partial_{tt}u(s)\|\d s\big):=\hat{R}^n_2. \label{EQ_2232}
\end{align}
Combining with the assumption \eqref{EQ_Fregularity}, one has
\begin{align}
R^n\leq \hat{R}^n\leq \Big\{\begin{array}{cc}C_{u,\alpha}\tau^\alpha,& k = 1,\\
C_{u,\alpha} \tau^2 t_{k-1}^{\alpha-3},& k\geq 2,\label{EQ_2212}
\end{array}
\end{align}
where $R^n:=R_1^n+R_2^n$ and $\hat{R}^n:=\hat{R}_1^n+\hat{R}_2^n$
\begin{theorem}\label{TH_CNpde}
Let $u^n$ {and} $U^n$ be the solutions to \eqref{EQ_exact} and \eqref{EQ_pdeCN}{,}  respectively.  Let $\lambda_1$ be the minimal eigenvalue of {the}  eigenvalue problem \eqref{EQ_eigen}{, and let $\kappa<\lambda_1$. Then,} for $ \tau <1/(\lambda_1-\kappa)$, it holds that
\begin{align}
\|e^n\|\leq e^{-(\lambda_1-\kappa) t_n}\|e^{0}\|+ C_{u,\alpha} \Big(C_9e^{-\frac{(\lambda_1-\kappa) t_n}{2}}\tau^\alpha+C_{10}t_n^{\alpha-2}\tau^2\Big),\label{EQ_0114}
\end{align}
where $C_9:=\frac{9}{2-\alpha}+12e^{-\frac{(\lambda_1-\kappa) t_n}{2}}$ and
\begin{align*}
C_{10}:=\Big\{\begin{array}{cc} \frac1{2-\alpha}(2^{2-\alpha}(1-e^{-\frac{7(\lambda_1-\kappa) t_n}{12}})-1),&t_n> -\frac{12}{7(\lambda_1-\kappa)}\ln(1-2^{\alpha-2}),\\
0,&t_n\leq  -\frac{12}{7(\lambda_1-\kappa)}\ln(1-2^{\alpha-2}).
\end{array}
\end{align*}
\end{theorem}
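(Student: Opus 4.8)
The plan is to diagonalize the spatial operator in the eigenbasis $\{\omega_k\}_{k\ge1}$ of \eqref{EQ_eigen}, which decouples the semi-discrete error system \eqref{EQ_0100} into a countable family of scalar problems already covered by Theorem \ref{TH_CNode}. Expanding $u(\cdot,t)=\sum_{k\ge1}u_k(t)\omega_k$, $U^n=\sum_{k\ge1}U^n_k\omega_k$ and $e^n=\sum_{k\ge1}e^n_k\omega_k$ via \eqref{EQ_basis} with $e^n_k=(e^n,\omega_k)$, and using $-\Delta\omega_k=\lambda_k\omega_k$, each coefficient $u_k$ solves the scalar ODE $u_k'=(\kappa-\lambda_k)u_k+f_k$, while the scheme \eqref{EQ_pdeCN} forces $U^n_k$ to satisfy the C-N recursion \eqref{EQ_CNode} with the reaction constant $\kappa$ replaced by $\bar\kappa_k:=\kappa-\lambda_k$. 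Since $\kappa<\lambda_1\le\lambda_k$, every $\bar\kappa_k<0$, so Theorem \ref{TH_CNode} applies to each mode separately.

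First I would transfer the regularity \eqref{EQ_Fregularity} to the modes. Projecting onto $\omega_k$ gives the pointwise bound $|u_k^{(j)}(t)|=|(\partial_t^j u(t),\omega_k)|\le\|\partial_t^j u(t)\|\le C_{u,\alpha}t^{\alpha-j}$, while Parseval gives the aggregate bound $\sum_{k\ge1}|u_k^{(j)}(t)|^2=\|\partial_t^j u(t)\|^2\le C_{u,\alpha}^2 t^{2(\alpha-j)}$; the former lets the per-mode truncation errors \eqref{EQ_2212} be estimated exactly as in the ODE analysis, and the latter (through Minkowski's inequality) is what permits reassembling the mode constants without loss. Applying Theorem \ref{TH_CNode} to mode $k$ then yields a bound of the form
\[
|e^n_k|\le e^{\bar\kappa_k t_n}|e^0_k|+C_3^{(k)}\,e^{\bar\kappa_k t_n/2}\tau^\alpha\,c_k+C_4^{(k)}\,t_n^{\alpha-2}\tau^2\,c_k,
\]
where $C_3^{(k)},C_4^{(k)}$ are the constants of Theorem \ref{TH_CNode} with $\kappa$ replaced by $\bar\kappa_k$, and $c_k$ is the mode amplitude of the relevant derivative of $u$.

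The decay-preserving estimate then follows by summing in $k$ and invoking Parseval $\|e^n\|^2=\sum_{k\ge1}|e^n_k|^2$, splitting the three contributions by the triangle inequality in $\ell^2$. For the data term, $\bar\kappa_k\le\bar\kappa_1=\kappa-\lambda_1$ gives $\sum_k e^{2\bar\kappa_k t_n}|e^0_k|^2\le e^{-2(\lambda_1-\kappa)t_n}\|e^0\|^2$, hence the factor $e^{-(\lambda_1-\kappa)t_n}$. For the low-order term, the monotonicity $C_3^{(k)}\le\tfrac{9}{2-\alpha}+12e^{-(\lambda_1-\kappa)t_n/2}=C_9$ together with $e^{\bar\kappa_k t_n}\le e^{-(\lambda_1-\kappa)t_n}$ and $\sum_k c_k^2\le C_{u,\alpha}^2$ reproduces $C_9e^{-(\lambda_1-\kappa)t_n/2}\tau^\alpha$, the exponential rate being inherited from the slowest mode $\lambda_1$ via Lemma \ref{Pro_2}. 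The high-order term is governed by the same two-scale summation of Lemma \ref{Lemma_SCN} that drove the ODE proof, now with the factor $\psi_{\tau,1}$ of the lowest mode playing the role of $\varrho$, which is precisely what makes $\lambda_1$ appear in $C_{10}$.

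I expect the main obstacle to sit in this last, high-order contribution, because the Crank--Nicolson amplification factor $\psi_{\tau,k}^{-1}=(1+\tfrac{\tau}{2}\bar\kappa_k)/(1-\tfrac{\tau}{2}\bar\kappa_k)$ tends to $-1$ as $\lambda_k\to\infty$: the high modes are only weakly damped, so any estimate that bounds the global $L^2$ amplification in operator norm by a quantity close to $1$ is useless, and a single resolvent bound (which would already close the argument for the L-stable implicit Euler scheme of Theorem \ref{TH_IEpde}) cannot be used here. This forces the genuinely mode-by-mode reduction above, in which each $e^n_k$ is damped at its own rate $\bar\kappa_k$ and the individual factors $e^{\bar\kappa_k t_n}$ must be retained under the Parseval sum and recombined through $\sum_k c_k^2\le C_{u,\alpha}^2$ rather than through a pointwise-in-$k$ worst case. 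Once the summation is organized so that the slowest mode $\lambda_1$ controls both the exponential rate and, via Lemma \ref{Lemma_SCN}, the algebraic coefficient, the PDE estimate follows from its ODE counterpart, Theorem \ref{TH_CNode}, with $\kappa$ replaced by $\kappa-\lambda_1$.
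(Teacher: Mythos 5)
Your overall route is the same as the paper's: expand in the eigenbasis of \eqref{EQ_eigen}, observe that each modal error obeys the scalar C-N recursion \eqref{EQ_CNode} with $\kappa$ replaced by $\bar\kappa_k=\kappa-\lambda_k$, and reassemble through Parseval and the $\ell^2$ triangle inequality (the paper's Lemma \ref{Lemma_abc}). The genuine gap lies in your pivotal step ``Theorem \ref{TH_CNode} applies to each mode separately.'' That theorem does not merely require $\bar\kappa_k<0$; it carries the standing hypothesis $0<-\bar\kappa_k\tau<1$, i.e.\ $(\lambda_k-\kappa)\tau<1$, whereas the present theorem only assumes $\tau<1/(\lambda_1-\kappa)$. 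Since $\lambda_k\to\infty$ while $\tau$ is fixed, this hypothesis fails for all but finitely many modes, and in the excluded regime the machinery underneath Theorem \ref{TH_CNode} genuinely breaks rather than merely needing a constant chase: Lemma \ref{Pro_2}'s two-sided comparison $e^{\frac76\bar\kappa_k\upsilon\tau}\le\psi_k^{-\upsilon}\le e^{\bar\kappa_k\upsilon\tau}$ is false once $(\lambda_k-\kappa)\tau>2$, where the C-N factor $\psi_k^{-1}=\bigl(1-\tfrac{(\lambda_k-\kappa)\tau}{2}\bigr)/\bigl(1+\tfrac{(\lambda_k-\kappa)\tau}{2}\bigr)$ is negative with modulus tending to $1$, far larger than $e^{-(\lambda_k-\kappa)\tau}$. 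Consequently the per-mode bound you write down, and the claim in your last paragraph that ``each $e^n_k$ is damped at its own rate $\bar\kappa_k$,'' are false for high modes. You correctly identified the lack of L-stability of C-N as the obstruction, but the mode-by-mode reduction does not overcome it: no exponential rate $e^{\bar\kappa_k t_n}$ is available for modes with $(\lambda_k-\kappa)\tau\ge 1$.

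For comparison, the paper does not invoke Theorem \ref{TH_CNode} per mode. It solves each scalar recursion explicitly, replaces every modal amplification factor by that of the lowest mode via $|\psi_k^{-1}|\le\psi_1^{-1}$, aggregates the modal truncation errors into $\hat R^j$ (which satisfies \eqref{EQ_2212}) using Lemma \ref{Lemma_abc}, and only then reruns the ODE argument once, for $\psi_1$, where $(\lambda_1-\kappa)\tau<1$ makes Lemma \ref{Pro_2} and Lemma \ref{Lemma_SCN} legitimately applicable; the whole proof thus hinges on a single hypothesis about $\lambda_1$. You should be aware, however, that the paper's uniform bound $|\psi_k^{-1}|\le\psi_1^{-1}$ is itself guaranteed only when $(\lambda_k-\kappa)(\lambda_1-\kappa)\tau^2\le 4$, so the high-mode difficulty you flagged is silently present there as well; closing it rigorously would require either extra treatment of the high modes (for instance exploiting the prefactor $\bigl(1+\tfrac{(\lambda_k-\kappa)\tau}{2}\bigr)^{-1}$ together with spatial regularity of the data) or an additional restriction on $\tau$ relative to the whole spectrum. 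Neither your proposal nor a literal reading of the paper supplies this, but your version fails earlier and more thoroughly, because it needs the full strength of per-mode exponential decay rather than only a comparison of amplification factors.
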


\subsection{Decay-preserving error estimate of the BDF2 semi-discrete scheme}
The BDF2 semi-discrete scheme is given by
\begin{equation}\label{EQ_pdeBDF2}
\begin{aligned}
U^1-U^{0}&=\tau \Delta U^{1}+\kappa \tau U^1+\tau f^{1},&&x\in \Omega,\;\\
\frac32U^n-2U^{n-1}+\frac12U^{n-2}&=\tau \Delta U^{n}+\kappa\tau U^n+\tau f^{n}, &&x\in \Omega,\;\;\; 2\leq n\leq N\\
U^0(x) &= u_0(x),&&x\in \Omega\\
U^n(x)& = 0,&&x\in \partial \Omega,\; 1\leq n\leq N.
\end{aligned}
\end{equation}
Let the global error $e^n:=u^n-U^n(0\leq n\leq N)$. Hence, the global error $e^n$ solves
\begin{align*}
e^1-e^{0}&=\tau \Delta e^{1}+\kappa\tau e^1+\tau \eta^{1},&&x\in \Omega,\;\\
\frac32e^n-2e^{n-1}+\frac12e^{n-2}&=\tau \Delta e^{n}+\kappa\tau e^n+\tau \eta^{n}, &&x\in \Omega,\;\;\; 2\leq n\leq N\\
e^0(x) &= 0,&&x\in \Omega\\
e^n(x)&=0,&&x\in \partial \Omega, \; 1\leq n\leq N,
\end{align*}
where $\eta^n$ denotes the truncation error.
\begin{theorem}\label{TH_BDF2pde}
Let $u^n$ {and} $U^n$ be the solutions to \eqref{EQ_exact} and \eqref{EQ_pdeBDF2}{,}  respectively.  Let $\lambda_1$ be the minimal eigenvalue of {the} eigenvalue problem \eqref{EQ_eigen}{,  and let $\kappa<\lambda_1$. Then,} for $ \tau <1/(4\lambda_1-\kappa)$, it holds that
\begin{align}
\|e^n\|\leq e^{-\frac{(\lambda_1-\kappa) t_n}{2}}\|e^{0}\|+ 2C_{u,\alpha} \Big(C_{11}e^{-\frac{(\lambda_1-\kappa) t_n}{2}}\tau^\alpha+C_{12}t_n^{\alpha-2}\tau^2\Big),\label{EQ_0026}
\end{align}
where $C_{11}:=\frac{2}{2-\alpha}+4e^{-\frac{(\lambda_1-\kappa) t_n}{4}}$ and
\begin{align*}
C_{12}:=\Big\{\begin{array}{cc} \frac1{2-\alpha}(2^{2-\alpha}(1-e^{-\frac{(\lambda_1-\kappa) t_n}{2}})-1),& t_n> -\frac{2}{\lambda_1-\kappa}\ln(1-2^{\alpha-2}),\\
0, & t_n\leq  -\frac{2}{\lambda_1-\kappa}\ln(1-2^{\alpha-2}).
\end{array}
\end{align*}
\end{theorem}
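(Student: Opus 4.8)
The plan is to diagonalize the PDE error equation against the eigenbasis of \eqref{EQ_eigen}, reducing it to a countable family of decoupled scalar BDF2 error equations, apply the already-established ODE estimate Theorem~\ref{TH_BDF2ode} mode by mode, and recombine through Parseval's identity.

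First I would expand the error as $e^n=\sum_{j\ge 1}e^n_j\omega_j$ with $e^n_j=(e^n,\omega_j)$, and likewise write the truncation error $\eta^n=\sum_{j\ge1}\eta^n_j\omega_j$. Testing the BDF2 error system against $\omega_j$ and using $(\Delta e^n,\omega_j)=-\lambda_j e^n_j$ (self-adjointness of $-\Delta$ together with the homogeneous boundary condition, via \eqref{EQ_basis}) collapses the system into the scalar recurrence
\[
\tfrac32 e^n_j-2e^{n-1}_j+\tfrac12 e^{n-2}_j=(\kappa-\lambda_j)\tau e^n_j+\tau\eta^n_j,\qquad e^0_j=0.
\]
This is precisely the scalar BDF2 error equation \eqref{EQ_CF} with reaction coefficient $\kappa_j:=\kappa-\lambda_j$ and driving data $\eta^k_j$. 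Because $\kappa<\lambda_1\le\lambda_j$ for every $j$, each $\kappa_j$ is strictly negative, so the sign hypothesis of Theorem~\ref{TH_BDF2ode} holds for all modes.

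Next I would invoke Theorem~\ref{TH_BDF2ode} (whose proof rests on the two-scale bound of Lemma~\ref{Lemma_SCN} and the truncation estimate \eqref{EQ_0018}) for each mode, with $\kappa$ replaced by $\kappa_j$ and $C_{u,\alpha}$ by a mode-wise regularity constant. Since $e^0_j=0$, this gives
\[
|e^n_j|\le 2C^{(j)}_{u,\alpha}\Big(C_{5,j}\,e^{\kappa_j t_n/2}\tau^\alpha+C_{6,j}\,t_{n-1}^{\alpha-2}\tau^2\Big),
\]
where $C_{5,j},C_{6,j}$ are the constants of Theorem~\ref{TH_BDF2ode} evaluated at $\kappa_j$. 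The crucial monotonicity facts are that $e^{\kappa_j t_n/2}\le e^{-(\lambda_1-\kappa)t_n/2}$ and that $C_{5,j}\le C_{5,1}$ (the exponential term in $C_5$ decreases as $\lambda_j$ grows), while $C_{6,j}$ stays uniformly bounded by $\tfrac{2^{2-\alpha}-1}{2-\alpha}$; thus every mode is dominated by the slowest mode $\lambda_1$. Squaring, summing over $j$, and applying Parseval together with the regularity assumption \eqref{EQ_Fregularity} and its higher-order analogues, in the form $\sum_j|\partial_t^k u_j(t)|^2=\|\partial_t^k u(t)\|^2\le C_{u,\alpha}^2 t^{2(\alpha-k)}$, then recovers the stated $L^2$ bound, with the initial-data term $\sum_j e^{\kappa_j t_n/2}|e^0_j|$ controlled by the mode-$\lambda_1$ exponential and the final constants $C_{11},C_{12}$ expressed through $\lambda_1-\kappa$.

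The main obstacle I anticipate is the step-size restriction. Theorem~\ref{TH_BDF2ode} requires $-4\kappa_j\tau<1$, i.e.\ $\tau<1/\bigl(4(\lambda_j-\kappa)\bigr)$, which fails for the high modes because $\lambda_j\to\infty$, whereas the PDE statement fixes a single $\tau<1/(4\lambda_1-\kappa)$. I would resolve this by splitting the spectrum: for the low modes with $4(\lambda_j-\kappa)\tau<1$ the per-mode estimate applies verbatim, while for the remaining high modes the A-stability of BDF2 guarantees that the discrete resolvent inverting the convolution kernel \eqref{EQ_BDF2kernels} is uniformly contractive, so those contributions are damped at least as fast as the $\lambda_1$ mode and are absorbed into the same exponential envelope. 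Establishing this uniform-in-$\lambda_j$ contractivity—essentially a root estimate for the BDF2 characteristic polynomial in the spirit of Lemma~\ref{Pro_2}—and verifying that the two-scale constants assemble into exactly $C_{11},C_{12}$ after the $\ell^2$ summation is where the genuine work lies; the spectral reduction and the appeal to Theorem~\ref{TH_BDF2ode} are otherwise routine.
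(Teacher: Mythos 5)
Your overall route coincides with the paper's: expand in the eigenbasis of \eqref{EQ_eigen}, reduce to decoupled scalar BDF2 recurrences with reaction coefficients $\kappa_j=\kappa-\lambda_j<0$, estimate mode by mode, and recombine in $\ell^2$. However, there is a structural gap in how you recombine. You invoke Theorem~\ref{TH_BDF2ode} per mode with mode-wise regularity constants $C^{(j)}_{u,\alpha}$ and then square-sum. By its role in \eqref{Assump_ode} and \eqref{EQ_0018}, $C^{(j)}_{u,\alpha}$ is a supremum over $t$ of quantities like $|\partial_t^k d_j(t)|\,t^{k-\alpha}$, whereas Parseval and \eqref{EQ_Fregularity} only control $\sum_j|\partial_t^k d_j(t)|^2$ at each \emph{fixed} $t$. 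Since $\sum_j\sup_t\geq\sup_t\sum_j$ (possibly with strict, even infinite, discrepancy), nothing guarantees $\sum_j\bigl(C^{(j)}_{u,\alpha}\bigr)^2\leq C_{u,\alpha}^2$, so the mode-wise constants need not assemble into the stated $C_{11},C_{12}$. The paper avoids exactly this trap by never introducing per-mode constants: it keeps the truncation errors $\eta^j_k$ mode-wise, aggregates them \emph{first} into $\hat\eta^j$ using Lemma~\ref{Lemma_abc} (the $\ell^2$ triangle inequality and its integral/Minkowski form), bounds $\hat\eta^j$ directly from the spatial $L^2$ regularity \eqref{EQ_Fregularity}, and only then runs the scalar DOC and two-scale machinery (Lemma~\ref{Lemma_SCN}, Lemma~\ref{Pro_1}) once on the aggregated inequality. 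Your argument should be restructured the same way: aggregate at the level of truncation errors, not at the level of sup-based constants.

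On the step-size obstruction, your diagnosis is correct and in fact sharper than the paper, whose own proof states the uniform kernel bound \eqref{EQ_2107} under $\tau<1/(4(\lambda_k-\kappa))$ and then uses it for every $k$ even though $\lambda_k\to\infty$; the paper glosses over precisely the issue you raise. But your proposed repair remains a sketch, and the key claim is delicate: once $2(\lambda_k-\kappa)\tau>1$ the roots in Lemma~\ref{Lemma_theta} become complex, the DOC kernels take the oscillatory form $\theta^{(k,n)}_{n-j}=\tfrac{2\sin((n-j+1)\phi_k)}{s_k}\bigl(3+2(\lambda_k-\kappa)\tau\bigr)^{-(n-j+1)/2}$ with $s_k^2=2(\lambda_k-\kappa)\tau-1$, so both the positivity and the comparison in \eqref{EQ_2107} genuinely fail (the modulus decays per step like $\bigl(3+2(\lambda_k-\kappa)\tau\bigr)^{-1/2}$, which is \emph{larger} than $\bigl(1+(\lambda_k-\kappa)\tau\bigr)^{-1}$ for large $\lambda_k$). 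Domination by the $\lambda_1$ envelope is still salvageable, since $\sqrt{3+2(\lambda_k-\kappa)\tau}\geq 2>1+(\lambda_1-\kappa)\tau$ under the stated restriction on $\tau$, but proving it requires a new root/modulus estimate (plus control of the $\sin$-prefactor near the double-root transition and of the intermediate real-root regime), not an application of Lemma~\ref{Pro_2}. Until that is written out, this part of your proposal is unproven --- as, in fairness, is the corresponding step in the paper's own proof.
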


\section{The link between convergence regimes and model parameters \label{Sec_4}}
{With the} decay-preserving  error {estimates established, we} now illustrate {how these estimates depend} on model parameters (i.e., the reaction coefficient $\kappa$, final time $T$, and spatial domain $\Omega$). To {achieve this}, we apply our error estimates to investigate and { predict various} convergence regimes, ranging from { lower-order to high-order, at} the last time level { for different choices of} model parameters. {Initially, we demonstrate} the effectiveness of error estimates {in addressing} various convergence regimes for ODEs and diffusion equations {separately.  Subsequently, we  propose} a  conjecture for decay-preserving error estimates of subdiffusion equations \eqref{EQ_FPDEs} and provide numerical {evidence} to {validate}  the conjecture.
\subsection{Discussion and numerical experiments for ODEs  \eqref{EQ_ordinary} and PDEs \eqref{EQ_exact}}
We now {examine} the relationship between convergence rates and model parameters for solving ODEs \eqref{EQ_ordinary} and PDEs \eqref{EQ_exact}.
Based on Theorems \ref{TH_IEode}, \ref{TH_CNode}, \ref{TH_BDF2ode} in Section \ref{Sec_2}{,}  and  Theorems \ref{TH_IEpde}, \ref{TH_CNpde}, \ref{TH_BDF2pde} in Section \ref{Sec_3}{, while setting} $e^0 = 0$,  the error estimates  at {the}  final time level $t_N = N\tau (= T)$ for IE, C-N and BDF2 schemes can be {expressed in}  the following unified form
\begin{align}
|e^N| &\leq C(e^{C\kappa T}\tau^{\alpha}+ T^{\alpha-k}\tau^k ), && \text{for ODEs}\label{e1}\\
\|e^N\|& \leq C(e^{-C(\lambda_1-\kappa) T}\tau^{\alpha}+ T^{\alpha-k}\tau^k ),&&\text{for PDEs}\label{e2}
\end{align}
where $k = 1$ indicates implicit  Euler scheme and $k=2$ indicates C-N or BDF2 scheme. Here $\lambda_1$  is the minimal eigenvalue of {the} eigenvalue problem \eqref{EQ_eigen}, which is determined by the spatial domain $\Omega$.

{Observing} that \eqref{e1} can be {regarded}  as a special case ($\lambda_1=0$) of \eqref{e2}{, we can further represent} \eqref{e1} and \eqref{e2} {in}  the following unified form
\begin{align}
\|e^N\|& \leq C(e^{-C(\lambda_1-\kappa) T}\tau^{\alpha}+ T^{\alpha-k}\tau^k )\label{e12}
\end{align}
with $\lambda_1>0$ for PDEs \eqref{EQ_exact} and $\lambda_1\equiv0$ for ODEs \eqref{EQ_ordinary}.

A fundamental question in \eqref{e12} is {how the} error estimates reflect and reproduce  different convergence rate regimes{,}  ranging from $\alpha$ to $k$ order{, by considering} different model parameters.  From a theoretical  point of view,  $\tau^\alpha$ is the leading order as  $\tau\to 0$. However, due  to limited computational resources,  we can only take a finite $N$ in practice. Once $N$ is finite (i.e., $\tau$ is not infinitesimal), the coefficients  of $\tau^\alpha$ and $\tau^k$ in \eqref{e12}  {become crucial in determining} which is dominant. Noting that the coefficient of $\tau^\alpha$ {takes the form of an} exponential function containing parameters $T, \kappa,\lambda_1$. As we  know, exponential decay is much  faster {than algebraic decay,} and thus {these}  two coefficients {represent}  two scales for some $\kappa, \lambda_1${,}  and $T$. {Therefore}, one reasonable explanation {for the}  different convergence rate regimes { arises} from  the trade-off or competition between the two scales in \eqref{e12}, i.e., $\mathcal{O}(e^{C\kappa T}\tau^{\alpha})$ and $\mathcal{O}(T^{\alpha-k}\tau^k)$.  %There are many cases where our error estimates capture important features of different convergence rates.
We present the qualitative analysis from \eqref{e3} as follows.

\begin{itemize}
  \item {\bf Case 1:} $\kappa\geq \lambda_1.$ In this situation, the coefficient $e^{-C(\lambda_1-\kappa) T}$ will { exponentially increase, making} the first term in \eqref{e12}, i.e.,  $e^{-C(\lambda_1-\kappa)  T}\tau^{\alpha}$, {consistently} dominant. Hence, for $\kappa\geq \lambda_1$, the convergence rate always {behaves} as $\alpha$-order.

\item {\bf Case 2:} $\kappa< \lambda_1.$ In this situation, the coefficient $e^{-C(\lambda_1-\kappa)  T}$ {undergoes exponential decay}.  In practical simulations,  there {exists}  a lower bound $\tau_0${, ensuring that all choices of the} time step $\tau$ {fall within} the interval $[\tau_0,\infty)$. {For $\tau\geq\tau_0$, sufficiently large $T$, and  sufficiently large $\lambda_1-\kappa$} (relative small $\kappa$ or large $\lambda_1$),  we have  $e^{-C(\lambda_1-\kappa) T}\tau^\alpha  \ll T^{\alpha-k}\tau^k${. This leads to the simplification of} our error estimate \eqref{e12}  to $\|e^N\| \leq C\tau^{k},\;\forall \tau \geq  \tau_0$.  Actually, fixing $\kappa,\lambda_1$ and $T$ in this situation, there is a $\tau_1\ll \tau_0$ such that $(e^{-C(\lambda_1-\kappa) T}\tau^\alpha)/  (T^{\alpha-k}\tau^k) = \mathcal{O}(1)$ for $\tau\leq \tau_1$ since $\tau^\alpha$ decreases much slower than $\tau^k$. {Consequently,} our error estimate \eqref{e12}  can be {simplified} to $\|e^N\| \leq C\tau^{\alpha},\;\forall \tau \leq  \tau_1$. Similarly, for sufficient small $T$ {and} $\lambda_1-\kappa$ (relative large $\kappa$ or small $\lambda_1$), {the} coefficients $e^{-C(\lambda_1-\kappa) T}$ and $T^{\alpha-k}$ are {on}  the same scale{, allowing us to reduce} our error estimate \eqref{e2} to $\|e^N\| \leq C\tau^{\alpha},\;\forall \tau >0$. {Therefore, by considering} the model parameters $\kappa, \lambda_1$, and $T$, we can determine the sizes of $\tau_0$ and $\tau_1$ to capture different convergence regimes  in practical simulations.

  In summary, we can qualitatively illustrate the convergence order based on the values of model parameters (i.e., $\kappa, \lambda_1$ and $T$) as follows:
 \begin{itemize}
 \item Given a lower bound of time steps $\tau_0$ and letting $\tau\geq \tau_0$, then
   \begin{enumerate}
    \item[(C1)] for sufficiently small $\lambda_1-\kappa$ (relative large $\kappa$ or small $\lambda_1$) and $T$,  the convergence order is $\mathcal{O}(\tau^\alpha)$;
    \item[(C2)]  for sufficiently large $\lambda_1-\kappa$ (relative small $\kappa$ or large $\lambda_1$) and $T$, the convergence order is $\mathcal{O}(\tau^k)$.
  \end{enumerate}
  \item Given the model parameters $\kappa, \lambda_1$ and $T$, then
     \begin{enumerate}
    \item[(C3)] for  sufficiently small $\tau$,  the convergence order is $\mathcal{O}(\tau^\alpha)$.
  \end{enumerate}
  \end{itemize}
\end{itemize}

 \begin{table}[!ht]
\begin{center}
\caption{({\bf Example 4.1} for ODEs) The convergence rates  with $T=1$  by taking various $\kappa$.
\label{T_ODEi} } \vspace*{0.5pt}
\def\temptablewidth{1.0\textwidth}
{\rule{\temptablewidth}{1pt}}
\begin{tabular*}{\temptablewidth}{@{\extracolsep{\fill}}ccccccc}
%&$N$ &\multicolumn{3}{c}{$L=1$}
%&\multicolumn{3}{c}{$L=\pi$} \\
%\cline{3-5}    \cline{6-8}
&$N$& $\kappa = -1$&$\kappa = -5$ & $\kappa = -10$&$\kappa = -15$&$\kappa = -20$\\
\hline
 %  &32  &1.33 &1.34 &1.40&1.00&1.09& 1.33      \\
   %  &128  &0.50 &0.77  &1.01 &1.01&1.01\\
     &256 &0.50&0.71 &1.00&1.00&1.00\\
  IE&512 &0.50&0.66 &0.99 &1.00&1.00\\
     &1024 &0.50&0.62 &0.98 &1.00&1.00\\
     &2048 &0.50&0.59 &0.96 &1.00&1.00\\
\hline
        &128  &0.50 &0.44  &2.83&2.00 &2.00\\
        &256 &0.50&0.48 &1.73&2.01&2.00\\
C-N &512 &0.50&0.49 &-0.63&2.03 &2.00\\
        &1024 &0.50&0.50 &0.25 &2.09&2.00\\
        &2048 &0.50&0.50 &0.42 &2.29&2.00\\
     \hline
             &128  &0.50 &0.44  &1.57 &2.04&2.02\\
            &256 &0.50&0.48 &-0.55&2.07&2.01\\
  BDF2 &512 &0.50&0.49 &0.26 &2.18&2.01\\
             &1024 &0.50&0.50 &0.42 &2.65&2.01\\
             &2048 &0.50&0.50 &0.47 &2.71&2.01\\

\end{tabular*}
{\rule{\temptablewidth}{1pt}}
\end{center}
\end{table}

\begin{table}[!ht]
\begin{center}
\caption{({\bf Example 4.1} for ODEs) The  convergence rates  with $\kappa=-1$  by taking various $T$.
\label{T_ODEii} } \vspace*{0.5pt}
\def\temptablewidth{1.0\textwidth}
{\rule{\temptablewidth}{1pt}}
\begin{tabular*}{\temptablewidth}{@{\extracolsep{\fill}}ccccccc}
%&$N$ &\multicolumn{3}{c}{$L=1$}
%&\multicolumn{3}{c}{$L=\pi$} \\
%\cline{3-5}    \cline{6-8}
&$N$& $T= 1$&$T = 5$ & $T = 10$&$T = 15$&$T = 20$\\
\hline
 %  &32  &1.33 &1.34 &1.40&1.00&1.09& 1.33      \\
   %  &128  &0.50 &0.77  &1.01 &1.01&1.01\\
     &256 &0.50&0.71 &1.00&1.00&1.00\\
  IE&512 &0.50&0.66 &0.99 &1.00&1.00\\
     &1024 &0.50&0.62 &0.98 &1.00&1.00\\
     &2048 &0.50&0.59 &0.96 &1.00&1.00\\
\hline
        &128  &0.50 &0.44  &2.83&2.00 &2.00\\
        &256 &0.50&0.48 &1.73&2.01&2.00\\
C-N &512 &0.50&0.49 &-0.63&2.03 &2.00\\
        &1024 &0.50&0.50 &0.25 &2.09&2.00\\
        &2048 &0.50&0.50 &0.42 &2.29&2.00\\
     \hline
             &128  &0.50 &0.44  &1.57 &2.04&2.02\\
            &256 &0.50&0.48 &-0.55&2.07&2.01\\
  BDF2 &512 &0.50&0.49 &0.26 &2.18&2.01\\
             &1024 &0.50&0.50 &0.42 &2.65&2.01\\
             &2048 &0.50&0.50 &0.47 &2.71&2.01\\

\end{tabular*}
{\rule{\temptablewidth}{1pt}}
\end{center}
\end{table}

\begin{table}[!ht]
\begin{center}
\caption{({\bf Example 4.1} for ODEs) The convergence rates  with various $\kappa\geq0$ and $T$.
\label{T_ODEiii} } \vspace*{0.5pt}
\def\temptablewidth{1.0\textwidth}
{\rule{\temptablewidth}{1pt}}
\begin{tabular*}{\temptablewidth}{@{\extracolsep{\fill}}cccccc}
&$N$ &\multicolumn{2}{c}{$\kappa=0$}
&\multicolumn{2}{c}{$\kappa = 0.5$} \\
\cline{3-4}    \cline{5-6}
&& $T= 1$&$T = 5$ & $T = 1$&$T = 5$\\
\hline
 %  &32  &1.33 &1.34 &1.40&1.00&1.09& 1.33      \\

  %   &128  &0.48 &0.48  &0.48 &0.49\\
     &256 &0.49&0.49 &0.48&0.49\\
  IE&512 &0.49&0.49 &0.49 &0.49\\
     &1024 &0.49&0.49 &0.49 &0.49\\
     &2048 &0.50&0.50 &0.49 &0.49\\
\hline
  %      &128  &0.50 &0.50  &0.50&0.51 \\
        &256 &0.50&0.50 &0.50&0.50\\
C-N &512 &0.50&0.50 &0.50&0.50\\
        &1024 &0.50&0.50 &0.50 &0.50\\
        &2048 &0.50&0.50 &0.50 &0.50\\
     \hline
  %      &128  &0.50 &0.50  &0.50&0.51 \\
        &256 &0.50&0.50 &0.50&0.50\\
BDF2 &512 &0.50&0.50 &0.50&0.50\\
        &1024 &0.50&0.50 &0.50 &0.50\\
        &2048 &0.50&0.50 &0.50 &0.50\\

\end{tabular*}
{\rule{\temptablewidth}{1pt}}
\end{center}
\end{table}

%\begin{figure}
%\begin{center}
%\subfigure[BDF2 scheme]{
%\includegraphics[width=2.5in]{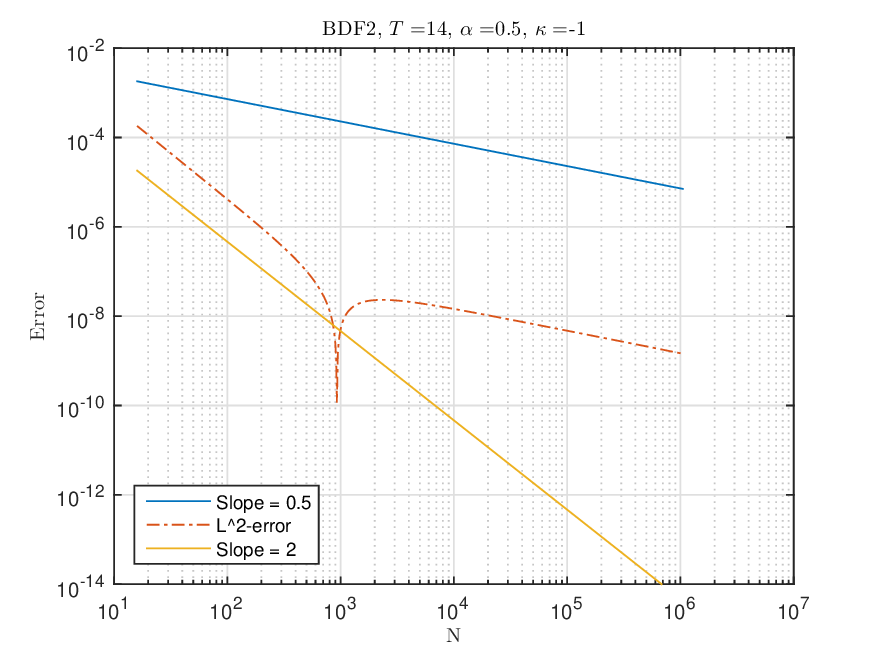}}
%\subfigure[C-N scheme]{
%\includegraphics[width=2.5in]{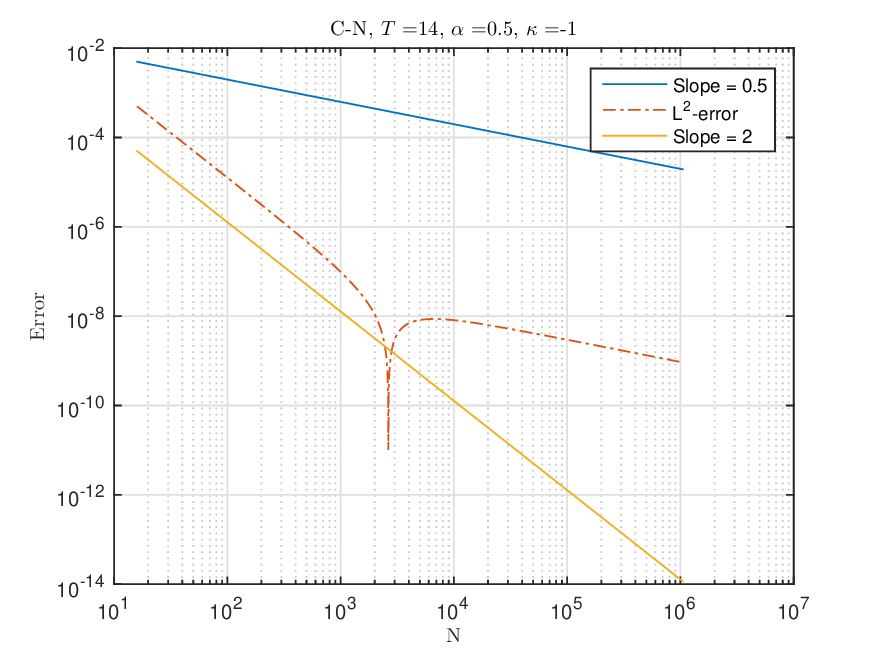}}
%%\includegraphics[width=2.5in]{BDF2k1a02T8_L1.eps}
%%\includegraphics[width=2.5in]{BDF2k1a02T8_LPI.eps}
%\caption{Absolute errors  with parameters  $T=14,\alpha = 0.5,\kappa = -1$: (a) BDF2 scheme; (b) C-N scheme.  \label{Fig_ODEtick}}
%\end{center}
%\end{figure}

\begin{example}\label{Ex1}
In this example, we shall illustrate various convergence rates in different model parameter regimes for solving ODEs \eqref{EQ_ordinary} (i.e., the case of $\lambda_1=0$).
To {achieve this}, we construct a benchmark solution {for}  ODE \eqref{EQ_ordinary} as $ u =10+t^\alpha$. The convergence order   at time level $t_N = T$ is usually  calculated by
\begin{align}\text{Order}(N)= \log_2(|e^{N/2}|/|e^N|).\label{EQ_calorder1}\end{align}

Table \ref{T_ODEi} {presents} convergence rates for the three numerical schemes{, with fixed} $T=1$ and increasing  $|\kappa|$ and $N$. Table \ref{T_ODEii} {displays} convergence rates {with fixed}  $\kappa=-1$ and increasing  $T$ and $N$. {In} Table \ref{T_ODEiii}{,  convergence rates are shown for} $\kappa = 0,1$, and $T = 1,5${, with $N$ increasing}.
{Analyzing} Tables \ref{T_ODEi} and \ref{T_ODEii} {reveals that}  the convergence rates {change} from $\mathcal{O}(\tau^\alpha)$ to $\mathcal{O}(\tau^k)$ ($k=1,2$)  as  $|\kappa|$ and $T$ increase, {aligning}  with our theoretical analysis in {\bf Case 2} and conclusions C1, C2.  {For cases such as}  $\kappa=-5$ or $T=5$ for IE scheme and $\kappa=-10$ or $T=10$ for C-N {and} BDF2 schemes{, as shown} in Tables \ref{T_ODEi} and \ref{T_ODEii},  the convergence rates tend {towards} $\mathcal{O}(\tau^\alpha)$  as  $N$ increases{. This observation is in line} with our theoretical analysis in {\bf Case 2} and conclusions C3. {In}  Table \ref{T_ODEiii}{, the convergence rate consistently remain} $\mathcal{O}(\tau^\alpha)$ as long as $\kappa\geq 0$,  which is consistent with our theoretical analysis in {\bf Case 1}  for $\kappa\geq 0$.
 %This implies that our decaying-preserving estimates can be efficient  to reveal the influence of the model parameters on convergence orders.
 %and eliminate the confusion why  the convergence rates are different under different model parameters.

\end{example}
\begin{example}\label{Ex2}
In this example, we {aim to} illustrate various convergence rates in different model parameter regimes for PDEs \eqref{EQ_exact} (i.e., the case of $\lambda_1>0$).
 To facilitate the immediate calculation of the minimal eigenvalue $\lambda_1$ of {the} eigenvalue problem \eqref{EQ_eigen}, we choose the spatial domain $\Omega = (0,L)$. {It's important to note that the observed} numerical behaviors of convergence {rates} in {\bf Case 1} and {\bf Case 2}  are based on temporal semi-discretizations. {This} implies that the choices of numerical methods for spatial discretization would not {introduce additional} influence on the convergence rates. For simplicity, we employ the finite difference method  for spatial discretization, where the  spatial length $h = L/M${, with $M$ being a positive integer}, and the  discrete $L^2$-norm  is denoted by $\|\cdot\|_h$. {Further details on spatial discretization can be found in} \cite{LLZ18}.

%For the  spatial discretization, we denote the spatial length $h = L/M$ for a positive integer $M$, the discrete grid $\Omega_h =\{  ih|1 \leq i \leq M-1\}$. For any grid function $v_h = \{v_i | v_i = v(x_i), x_i\in \Omega_h\}$, let  $\Delta_h v_i = \frac1{h^2}(v_{i+1}-2v_i+v_{i-1})$ be the standard second-order approximation of $v_{xx}(x_i)$. Then the fully discretizations of the three typical schemes are given by
% \begin{align}
%&\frac{1}{\tau}\nabla_\tau U_h^n = \Delta_h U_h^n+\kappa U_h^n+f_h^n,   &&\text{Implicit Euler scheme}\label{EQ_IEful} \\
%&\frac{1}{\tau} \nabla_\tau U_h^n= \Delta_h U_h^{n-\frac12}+\kappa U_h^{n-\frac12}+f_h^{n-\frac12},  && \text{C-N scheme} \label{EQ_CNful}\\
%& D_2 U_h^n=\kappa U_h^{n}+f_h^{n}, \;\;n\geq 2,  && \text{BDF2 scheme}
%\label{EQ_BDF2ful}
%\end{align}
%with initial value $U_h^0= u_0(x_h),\;\forall x_h\in \Omega_h$. Note that the BDF2 needs two initial values, we here use the implicit Euler to compute $U_h    ^1$. Denote  the error at $t_N = T, x = x_h$ by $e_h(N)$.
%Denote the space of grid functions by $\mathcal{V}_h = \{v_h| v_0 = v_M = 0\}$.
%The discrete $L^2$ inner product and associated discrete $L^2$-norm are defined respectively by
%$\langle u,v\rangle_h  := \sum_{i=1}^{M-1} h u_iv_i,\;  \|u\|_h := \sqrt{\langle u,u\rangle}, \;  \forall u,v\in \mathcal{V}_h.
%$
%Then the empirical  convergence rates are calculated by  $\text{Order}(N)= \log_2(\|e_h(N/2)\|_h)/\|e_h(N)\|_h)$

We also use the benchmark solution of  PDEs \eqref{EQ_exact} constructed in \eqref{exact}, i.e., $ u = t^{\alpha}\sin(\pi x/L)$.
Table \ref{T_PDEi} {presents} convergence rates of the three numerical schemes{, with fixed} $T=1,L=\pi$, and decreasing  $\kappa$ {while} increasing $N$. Table \ref{T_PDEi2} {illustrates} convergence rates {with fixed} $\kappa=0, T= 10$, and  {an increase in} spatial length $L$ and $N$. Table \ref{T_PDEii} {displays}  convergence rates with $\kappa = 0, L = \pi$, and  {and an increase in the}  final time $T$. {Lastly,} Table \ref{T_PDEiii} shows  convergence rates with $T=5$, $\kappa = 1,1.5, L = \pi,4$ and   increasing  $N$.

 As {observed} in Tables \ref{T_PDEi}, \ref{T_PDEi2} and \ref{T_PDEii},  the convergence rates  {change} from $\mathcal{O}(\tau^\alpha)$ to $\mathcal{O}(\tau^k)$ ($k=1,2$)  as  $\lambda_1$ and $T$ increase or $\kappa$ decreases,{. This observation aligns} with our theoretical analysis in {\bf Case 2} and conclusions C1,C2.
 In {instances where}  $\kappa=-5$ or $T=5$ or $L=4,5$ for IE scheme and $\kappa=-10$ or $L=3$ or $T=10$ for C-N {and} BDF2 schemes{, as shown} in Tables \ref{T_PDEi}, \ref{T_PDEi2} and \ref{T_PDEii},  the convergence rates tend { towards} $\mathcal{O}(\tau^\alpha)$  as  $N$ increases, {consistent} with {\bf Case 2} and conclusions C3.
   Table \ref{T_PDEiii} {illustrates that} the convergence rate {remains} $\mathcal{O}(\tau^\alpha)$ as long as $\kappa\geq \lambda_1$,  which is consistent with {\bf Case 1}  for $\kappa\geq \lambda_1$.
\end{example}

Another  interesting observation  is that  these schemes {exhibit} different numerical behaviors when convergence rates  change from $k$ to $\alpha$. {Specifically, Tables}  \ref{T_ODEi} and \ref{T_ODEii} for ODEs \eqref{EQ_ordinary} and Tables  \ref{T_PDEi}, \ref{T_PDEi2} and \ref{T_PDEii} for PDEs \eqref{EQ_exact} further {demonstrate}

\begin{enumerate}
   \item [(P1)] the convergence rates of {the} implicit Euler scheme change  monotonously with respect to $N$ and {the} model parameters $\kappa,T,\lambda_1$;
   \item [(P2)] some {unusual orders,} such as 2.68, 0.29 even -0.55 appear for BDF2  and C-N schemes.
\end{enumerate}

 The numerical phenomenon (P1) {can also} be well interpreted by our decay-preserving estimates.
Inserting the estimate \eqref{e12} into the formula $\text{Order}(N):= \log_2(\|e^{N/2}\|)/\|e^N\|)$, one has
\begin{align}
\text{Order($N$)}&=\log_2(\frac{\|e^{N/2}\|}{\|e^{N}\|})=\log_2\big(\frac{2^{\alpha}e^{-C (\lambda_1-\kappa) T }\tau^{\alpha}+2^kT^{\alpha-k}\tau^k}{e^{-C(\lambda_1-\kappa)  T}\tau^{\alpha}+T^{\alpha-k} \tau^k}\big)\nonumber\\
&=\log_2\Big(\frac{2^k}{1+\frac{2^{k-\alpha}-1}{1+2^{k-\alpha}e^{(\lambda_1-\kappa)  T}/N^{k-\alpha}}}\Big).\label{Porder}
\end{align}
From \eqref{Porder},  the convergence rate {monotonously increases} with respect to $\lambda_1$ and $T${, transitioning} from $\alpha$-order to $k$-order{,  and simultaneously  decreases} with respect to $\kappa$ {and}  $N${. This observation aligns} with numerical phenomenon (P1){, suggesting that} our decay-preserving error estimates   are { sufficiently sharp} to capture the {entire progression of convergence orders in the} implicit Euler scheme as $T, \lambda_1, \kappa$, and $N$ change.

To {better illustrate}  numerical phenomenon (P2),   Figures \ref{F1a} and \ref{F1b}  {depict} the absolute  {errors}  for  ODEs \eqref{EQ_ordinary} with fixed model parameters $T=14$ {and}  $\kappa = -1${. Additionally,} Figures \ref{F1c} and \ref{F1d}  {displays} the $L^2$-norm { errors} for PDEs \eqref{EQ_exact}  with fixed model parameters $T=12, \kappa = 0$, {and} $L = \pi (\lambda_1-\kappa = 1)$.
 %Consistent with our theoretical analysis in {\bf Case 2} and conclusions  C2 and C3,  the convergence rate in Figure \ref{Fig_ODEtick} is $\mathcal{O}(N^{-2})$ for small $N$ and to be $\mathcal{O}(N^{-\alpha})$ for large $N$.
As {observed, Figure \ref{Fig_PDEtick} exhibits kinks, suggesting the presence of a} sup-convergence point $N_0$ {where} the absolute ($L^2$-norm) {errors become notably small. Unfortunately,} our current analysis cannot {elusidate this} sup-convergence phenomenon, which may {necessitate a}  refined error estimate in the future.

 %This implies that our decaying-preserving estimates can be efficient  to reveal the influence of the model parameters on convergence orders.
 %and eliminate the confusion why  the convergence rates are different under different model parameters.

 \begin{table}[!ht]
\begin{center}
\caption {({\bf Example 4.2} for PDEs) The convergence rates  with $T=1, L = \pi$  by taking various $\kappa$.
\label{T_PDEi} } \vspace*{0.5pt}
\def\temptablewidth{1.0\textwidth}
{\rule{\temptablewidth}{1pt}}
\begin{tabular*}{\temptablewidth}{@{\extracolsep{\fill}}ccccccc}
%&$N$ &\multicolumn{3}{c}{$L=1$}
%&\multicolumn{3}{c}{$L=\pi$} \\
%\cline{3-5}    \cline{6-8}
&$N$& $\kappa = 0$& $\kappa = -5$&$\kappa = -10$ & $\kappa = -15$&$\kappa = -20$\\
\hline
 %  &32  &1.33 &1.34 &1.40&1.00&1.09& 1.33      \\
%     &128  &0.50 &0.87  &1.01 &1.01&1.01\\
     &256 &0.50&0.81 &1.00&1.00&1.00\\
  IE&512 &0.50&0.75 &1.00 &1.00&1.00\\
     &1024 &0.50&0.70 &0.99 &1.00&1.00\\
     &2048 &0.50&0.66 &0.98 &1.00&1.00\\
\hline
  %      &128  &0.50 &0.36  &2.22&2.00 &2.00\\
        &256 &0.50&0.45 &2.03&2.00&2.00\\
C-N &512 &0.50&0.48 &1.36&2.01 &2.00\\
        &1024 &0.50&0.49 &-0.50 &2.03&2.00\\
        &2048 &0.50&0.50 &0.26 &2.10&2.00\\
     \hline
     %        &128  &0.50 &0.39  &3.38 &2.03&2.02\\
            &256 &0.50&0.46 &0.23&2.03&2.01\\
  BDF2 &512 &0.50&0.48 &-0.22 &2.08&2.01\\
             &1024 &0.50&0.49 &0.31 &2.24&2.04\\
             &2048 &0.50&0.50 &0.44 &3.01&2.08\\

\end{tabular*}
{\rule{\temptablewidth}{1pt}}
\end{center}
\end{table}

\begin{table}[!ht]
\begin{center}
\caption {({\bf Example 4.2} for PDEs) The convergence rates  with $\kappa=0, T = 10$  by taking various $L$.
\label{T_PDEi2} } \vspace*{0.5pt}
\def\temptablewidth{1.0\textwidth}
{\rule{\temptablewidth}{1pt}}
\begin{tabular*}{\temptablewidth}{@{\extracolsep{\fill}}ccccccc}
&$N$ &\multicolumn{1}{c}{$L=1$} &\multicolumn{1}{c}{$L=2$} &\multicolumn{1}{c}{$L=3$} &\multicolumn{1}{c}{$L=4$} &\multicolumn{1}{c}{$L=5$}
\\
%\cline{3-5}    \cline{6-8}
&& $(\lambda_1 \approx9.87 )$&$(\lambda_1 \approx 2.47)$ & $(\lambda_1 \approx 1.10)$&$(\lambda_1 \approx 0.62)$&$(\lambda_1 \approx 0.39)$\\
\hline
 %  &32  &1.33 &1.34 &1.40&1.00&1.09& 1.33      \\
     %&128  &1.01 &1.01  &1.01 &0.88&0.68\\
     &256 &1.00&1.00 &1.00&0.82&0.63\\
  IE&512 &1.00&1.00 &1.00 &0.77&0.59\\
     &1024 &1.01&1.00 &0.99 &0.72&0.57\\
     &2048 &1.01&1.00 &0.98 &0.67&0.55\\
\hline
  %      &128  &2.00 &2.00  &2.23&0.34 &0.47\\
        &256 &2.00&2.00 &2.98&0.44&0.49\\
C-N &512 &1.98&1.99 &1.18&0.48 &0.49\\
        &1024 &1.93&1.99 &-0.45 &0.49&0.50\\
        &2048 &1.86&1.98 &0.27 &0.50&0.50\\
     \hline
          %   &128  &2.07 &2.00  &3.48 &0.38&0.47\\
            &256 &1.67&2.04 &0.03&0.46&0.49\\
  BDF2 &512 &1.05&2.26 &-0.15 &0.48&0.49\\
             &1024 &3.77&1.28 &0.33 &0.49&0.50\\
             &2048 &-1.94&0.64 &0.39 &0.50&0.50\\
\end{tabular*}
{\rule{\temptablewidth}{1pt}}
\end{center}
\end{table}

\begin{table}[!ht]
\begin{center}
\caption {({\bf Example 4.2} for PDEs) The convergence rates  with $\kappa=0,L=\pi$  by taking various $T$.
\label{T_PDEii} } \vspace*{0.5pt}
\def\temptablewidth{1.0\textwidth}
{\rule{\temptablewidth}{1pt}}
\begin{tabular*}{\temptablewidth}{@{\extracolsep{\fill}}ccccccc}
%&$N$ &\multicolumn{3}{c}{$L=1$}
%&\multicolumn{3}{c}{$L=\pi$} \\
%\cline{3-5}    \cline{6-8}
&$N$& $T= 1$&$T = 5$ & $T = 10$&$T = 15$&$T = 20$\\
\hline
 %  &32  &1.33 &1.34 &1.40&1.00&1.09& 1.33      \\

%     &128  &0.50 &0.77  &1.01 &1.01&1.01\\
     &256 &0.50&0.71 &1.00&1.00&1.00\\
  IE&512 &0.50&0.66 &0.99 &1.00&1.00\\
     &1024 &0.50&0.62 &0.97 &1.00&1.00\\
     &2048 &0.50&0.59 &0.96 &1.00&1.00\\
\hline
      %  &128  &0.50 &0.44  &2.83&2.00 &2.00\\
        &256 &0.50&0.48 &1.74&2.01&2.00\\
C-N &512 &0.50&0.49 &-0.63&2.03 &2.00\\
        &1024 &0.50&0.50 &0.24 &2.10&2.01\\
        &2048 &0.50&0.50 &0.42 &2.37&2.07\\
     \hline
         %    &128  &0.50 &0.44  &1.57 &2.03&2.00\\
            &256 &0.50&0.48 &-0.56&2.01&2.05\\
  BDF2 &512 &0.50&0.49 &0.27 &2.39&2.31\\
             &1024 &0.50&0.50 &0.43 &3.53&1.37\\
             &2048 &0.50&0.50 &0.45 &1.79&0.75\\

\end{tabular*}
{\rule{\temptablewidth}{1pt}}
\end{center}
\end{table}

\begin{table}[!ht]
\begin{center}
\caption {({\bf Example 4.2} for PDEs) The convergence rates  with $T = 5$ and different $\kappa\geq\lambda_1$ and $L$.
\label{T_PDEiii} } \vspace*{0.5pt}
\def\temptablewidth{1.0\textwidth}
{\rule{\temptablewidth}{1pt}}
\begin{tabular*}{\temptablewidth}{@{\extracolsep{\fill}}cccccc}
&$N$ &\multicolumn{2}{c}{$\kappa=1$}
&\multicolumn{2}{c}{$\kappa = 1.5$} \\
\cline{3-4}    \cline{5-6}
&& $L = \pi$&$L = 4$ & $L=\pi$&$L=4$\\
&&$(\lambda_1 = 1)$ & $(\lambda_1\approx 0.62)$&$(\lambda_1=1)$& $(\lambda_1\approx 0.62)$\\
\hline
 %  &32  &1.33 &1.34 &1.40&1.00&1.09& 1.33      \\

   %  &128  &0.48 &0.48  &0.49 &0.57\\
     &256 &0.49&0.48 &0.49&0.52\\
  IE&512 &0.49&0.49 &0.49 &0.50\\
     &1024 &0.49&0.49 &0.49 &0.50\\
     &2048 &0.50&0.49 &0.49 &0.50\\
\hline
      %  &128  &0.50 &0.51  &0.51&0.51 \\
        &256 &0.50&0.50 &0.50&0.51\\
C-N &512 &0.50&0.50 &0.50&0.50\\
        &1024 &0.50&0.50 &0.50 &0.50\\
        &2048 &0.50&0.50 &0.50 &0.50\\
     \hline
        %&128  &0.50 &0.50  &0.51&0.52 \\
        &256 &0.50&0.50 &0.50&0.51\\
BDF2 &512 &0.50&0.50 &0.50&0.50\\
        &1024 &0.50&0.50 &0.50 &0.50\\
        &2048 &0.50&0.50 &0.50 &0.50\\

\end{tabular*}
{\rule{\temptablewidth}{1pt}}
\end{center}
\end{table}

\begin{figure}[!ht]
\begin{center}
\subfigure[BDF2 scheme for ODEs\label{F1a}]{
\includegraphics[width=2.5in]{BDF2_tick.eps}}
\subfigure[C-N scheme for ODEs\label{F1b}]{
\includegraphics[width=2.5in]{CN_tick.eps}}\\
\subfigure[BDF2 scheme for PDEs\label{F1c}]{
\includegraphics[width=2.5in]{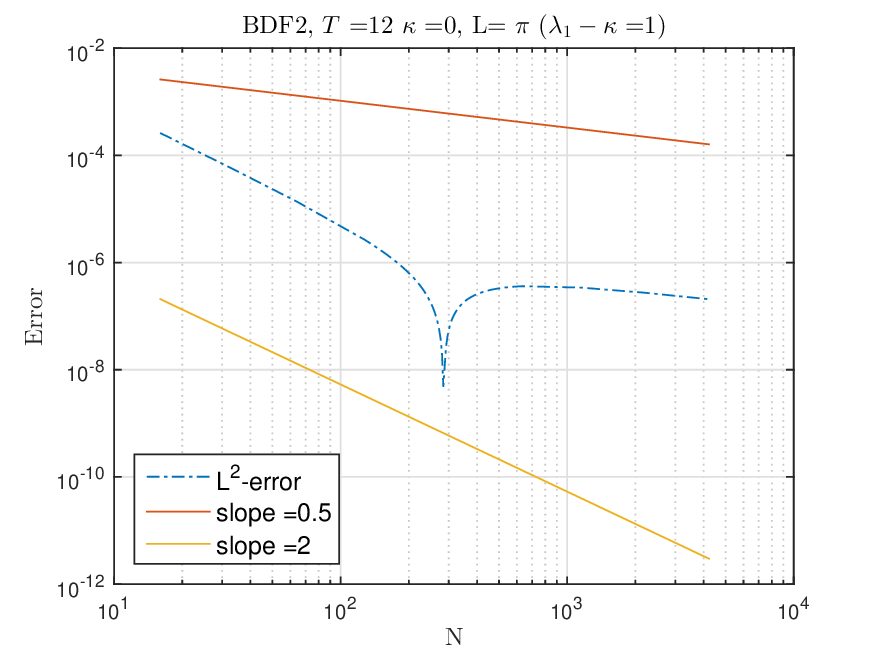}}
\subfigure[C-N scheme  for PDEs\label{F1d}]{
\includegraphics[width=2.5in]{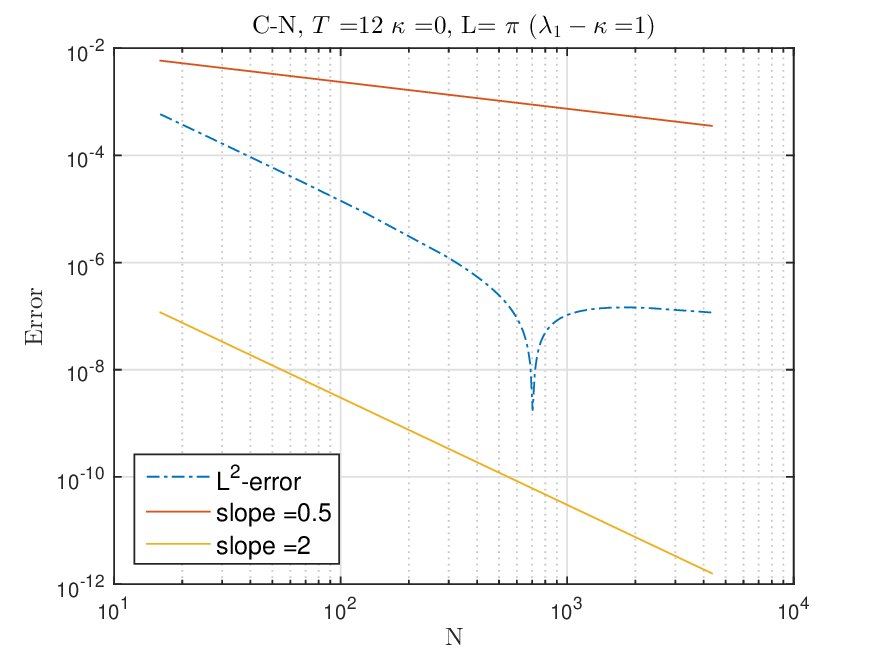}}
\caption{ Errors with fixed model parameters $T, \kappa, \lambda_1$: absolute error for ODEs and PDEs. \label{Fig_PDEtick}}
\end{center}
\end{figure}
\newpage

\subsection{Conjecture and numerical experiments for  sub-diffusion equations}
It is known that the solution to {the} sub-diffusion equation \eqref{EQ_FPDEs} {exhibits} weak regularity \eqref{EQ_Fregularity} even for smooth initial values. As {displayed} in Table \ref{T_F1}, {various}  convergence rates of the $L1$ scheme are observed. { Therefore, it is both significant and desirable  to formulate corresponding} decay-preserving estimates
    {for the} L1 scheme {applied to} equation \eqref{EQ_FPDEs}.
   
   { Let }$U^n$ be {the} numerical approximation to $u(x,t_n)$. The L1 formula \cite{LX07,SW06,LLZ18} {for approximating} the fractional Caputo derivative is defined {as follws:}
\begin{align}
\mathcal{D}_\tau^\alpha U^n : = \frac{\tau^{-\alpha}}{\Gamma(2-\alpha)}\sum_{j=1}^nA_{n-j}(U^j-U^{j-1}),
\end{align}
where $A_i = (i+1)^{1-\alpha}-i^{1-\alpha}$. Hence, the semi-discrete L1 scheme to \eqref{EQ_Fregularity} is given by
\begin{align}
\mathcal{D}_\tau^\alpha U^n  = \Delta U^n+\kappa U^n +f^n,\label{EQ_L1}
\end{align}
with $U^0(x) = u_0(x)$ and $U^n(x)=0,\;\forall x\in \partial \Omega$.

Due to the nonlocality of the  fractional operator,  {obtaining a} decay-preserving estimate {poses an essential challenge, as} the discrete fundamental solution to $\mathcal{D}_\tau^\alpha U^n = \kappa U^n$ is {difficult to explicitly formulate}.
{Therefore, we only propose} a convincing conjecture {regarding} the decay-preserving estimate of {the} L1 scheme for sub-diffusion equations \eqref{EQ_FPDEs} and {substantiate} our conjecture numerically. The following conjecture is {inspired}  by the error estimates for ODEs and PDEs and the fundamental solution of the following fractional ordinary equation
\begin{equation*}
  \partial_t^\alpha y(t) = \kappa y(t)+f(t), \quad y(0) = y_0,
\end{equation*}
  for example, see \cite[Remark 7.1]{D10}, which can be expressed in the form of
\begin{equation*}
  y(t) = y_0 E_\alpha(\kappa t^\alpha)+\alpha\int_0^t E_\alpha'(\kappa (t-s)^\alpha)(t-s)^{\alpha-1}f(s)\d s.
\end{equation*}
Here $E_\alpha(\cdot):=\sum_{j=0}^\infty \frac{(\cdot)^j}{\Gamma(j\alpha+1)}$ represents Mittag-Leffler function \cite{D10}.
The decay-preserving error estimate for {the}  $L1$ scheme \eqref{EQ_L1} is shown as follows.
    \begin{conjecture}\label{C_FPDEs}
Let $u^n$ {and} $U^n$ be the solutions to \eqref{EQ_FPDEs} and {the} $L1$ scheme \eqref{EQ_L1}, respectively. Let $\lambda_1$ be the minimal eigenvalue of {the} eigenvalue problem \eqref{EQ_eigen}, and  let $\kappa<\lambda_1$. Then for sufficiently small $\tau$, i{the following holds:}
\begin{align}
\|e^n\|\leq E_\alpha(-C(\lambda_1-\kappa) t_n^\alpha)\|e^{0}\|+ \Cu t_n^{\alpha-1}\Big( E_\alpha'(-C(\lambda_1-\kappa)  t_n^\alpha)\tau+\tau^{2-\alpha}\Big).\label{EQ_INfpde}
\end{align}
%where $E_\alpha(\cdot):=\sum_{j=0}^\infty \frac{(\cdot)^j}{\Gamma(j\alpha+1)}$ denotes the Mittag-Leffler function .
\end{conjecture}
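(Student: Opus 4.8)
The plan is to transplant the three-step strategy of Sections~\ref{Sec_2}--\ref{Sec_3} (eigen-decomposition, explicit solution operator, two-scale summation) to the nonlocal setting, replacing the geometric kernel $\varrho^{-n}$ of the integer-order schemes by the discrete fundamental solution of the L1 operator. First I would expand the error in the orthonormal eigenbasis \eqref{EQ_basis}, writing $e^n=\sum_k e^n_k\omega_k$, so that each mode decouples into a scalar L1 recurrence
\begin{equation*}
\mathcal{D}_\tau^\alpha e^n_k = -\mu_k\, e^n_k + R^n_k,\qquad \mu_k:=\lambda_k-\kappa\ge \lambda_1-\kappa>0 .
\end{equation*}
Since $\|e^n\|^2=\sum_k|e^n_k|^2$ and, by the complete monotonicity of $E_\alpha(-\cdot)$, both $E_\alpha(-C\mu t^\alpha)$ and $E_\alpha'(-C\mu t^\alpha)$ decrease as $\mu$ grows, the mode-$k$ kernels are dominated by those of the slowest mode $\mu_1=\lambda_1-\kappa$. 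The decaying coefficients can therefore be pulled out at $\mu_1$ and the residual mode-dependent factors reassembled against the regularity \eqref{EQ_Fregularity} via Parseval, reducing matters to the scalar estimate with $\mu_1$.

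Next I would represent the scalar error through the discrete fundamental solution. Let $\{G^n\}$ solve $\mathcal{D}_\tau^\alpha G^n=-\mu G^n$ with $G^0=1$, and let $\{H^m\}$ be the associated discrete convolution (resolvent) kernel; then
\begin{equation*}
e^n_k = G^n e^0_k + \tau\sum_{j=1}^n H^{\,n-j}R^j_k .
\end{equation*}
The continuous counterparts are precisely $G^n\leftrightarrow E_\alpha(-\mu t_n^\alpha)$ and $H^{\,m}\leftrightarrow \alpha E_\alpha'(-\mu t_m^\alpha)t_m^{\alpha-1}$, which is the structure visible in the Duhamel formula quoted before \eqref{EQ_INfpde} and in the target bound itself. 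Under the regularity \eqref{EQ_Fregularity}, the L1 truncation analysis of \cite{LX07,SW06,LLZ18} supplies $|R^1_k|\lesssim \Cu\tau^\alpha$ and $|R^j_k|\lesssim \Cu\tau^{2-\alpha}t_{j-1}^{\alpha-2}$ for $j\ge2$, matching the structure of the bounds already used for the integer-order schemes.

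The decisive step is to establish sharp pointwise bounds on the discrete kernels that reproduce the Mittag-Leffler decay, namely $0\le G^n\lesssim E_\alpha(-c\mu t_n^\alpha)$ together with an algebraic bound $0\le H^{\,m}\lesssim t_m^{\alpha-1}E_\alpha'(-c\mu t_m^\alpha)$, and then to prove a discrete two-scale summation estimate in the spirit of Lemma~\ref{Lemma_SCN}. Feeding the truncation bounds into the convolution $\tau\sum_j H^{\,n-j}R^j_k$, the sum would split into a low-order-in-$\tau$ part carrying the decaying coefficient $t_n^{\alpha-1}E_\alpha'(-c\mu t_n^\alpha)\tau$ and a high-order remainder of size $t_n^{\alpha-1}\tau^{2-\alpha}$; combined with the $G^n e^0_k$ term and summed over $k$, this would deliver \eqref{EQ_INfpde}.

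The main obstacle is exactly this last step. For the implicit Euler, C-N and BDF2 schemes the fundamental solution is the geometric sequence $\varrho^{-n}$, whose exponential decay and closed-form summation yield Lemma~\ref{Lemma_SCN} immediately; because $\mathcal{D}_\tau^\alpha$ is nonlocal, no such closed form exists for the L1 kernel. One is forced to analyze $G^n$ through its generating function $\widehat{G}(\zeta)=\big(\tau^{-\alpha}\delta_\alpha(\zeta)+\mu\big)^{-1}$, where $\delta_\alpha(\zeta)=\Gamma(2-\alpha)^{-1}(1-\zeta)\sum_{i\ge0}A_i\zeta^i\sim(1-\zeta)^\alpha$ as $\zeta\to1$ is the symbol of the L1 operator, and to extract the \emph{algebraic} (rather than exponential) decay rate of $G^n$ and $H^m$ by delicate contour-integral estimates of the type used in convolution-quadrature theory. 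Proving that the discrete kernels inherit both the size and the sharp $t^{\alpha-1}$ weight of the continuous Mittag-Leffler resolvent, uniformly in $\mu$, is the technical crux; this is why the statement is recorded as a conjecture rather than a theorem.
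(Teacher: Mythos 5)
Your proposal is not a proof, and it is important to be clear that the paper does not contain one either: the statement is recorded as Conjecture~\ref{C_FPDEs} precisely because the authors could not carry out the program you describe. The paper's only support for \eqref{EQ_INfpde} is the heuristic analogy with the continuous fractional ODE $\partial_t^\alpha y = \kappa y + f$, whose Duhamel representation through $E_\alpha$ and $E_\alpha'$ suggests the form of the bound, plus the numerical evidence of Tables~\ref{T_FPDEi}--\ref{T_FPDEiii}. Your outline reproduces and extends that heuristic in a sensible way (eigen-expansion via \eqref{EQ_basis} to decouple modes, a discrete fundamental solution $G^n$ and resolvent kernel $H^m$ for the scalar L1 recurrence, and a fractional analogue of the two-scale summation Lemma~\ref{Lemma_SCN}), and you honestly flag where it stops being a proof. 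That self-assessment is accurate: the decisive ingredients --- the pointwise bounds $0\le G^n\lesssim E_\alpha(-c\mu t_n^\alpha)$ and $0\le H^m\lesssim t_m^{\alpha-1}E_\alpha'(-c\mu t_m^\alpha)$, uniform in $\mu$, together with the convolution lemma that splits $\tau\sum_j H^{n-j}R^j_k$ into the two scales of \eqref{EQ_INfpde} --- are asserted as targets, not established, and they are exactly the obstruction the paper cites ("the discrete fundamental solution to $\mathcal{D}_\tau^\alpha U^n=\kappa U^n$ is difficult to explicitly formulate").

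One additional gap in your plan deserves naming, because it is easy to overlook: your mode-domination step invokes the complete monotonicity of the \emph{continuous} Mittag-Leffler function to argue that the mode-$k$ kernels are controlled by those of the slowest mode $\mu_1=\lambda_1-\kappa$. But after decoupling, each mode evolves under the \emph{discrete} kernels $G^n(\mu_k)$, $H^m(\mu_k)$, so what you actually need is the monotonicity $G^n(\mu_k)\le G^n(\mu_1)$ and $H^m(\mu_k)\le H^m(\mu_1)$ for all $\mu_k\ge\mu_1$ at the discrete level, along with their nonnegativity. For the integer-order schemes this is trivial because the kernels are explicit geometric sequences (this is what makes the proofs of Theorems~\ref{TH_IEpde}--\ref{TH_BDF2pde} go through); for the L1 resolvent it is yet another unproven property that would have to come out of the same generating-function or contour analysis. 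So the proposal should be read as a credible research program --- consistent with the paper's own motivation and with known convolution-quadrature techniques --- rather than as a proof of the conjecture; as it stands, every quantitative step that distinguishes the conjecture from its continuous-time analogue remains open.
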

\noindent Taking $e^0=0$ in \eqref{EQ_INfpde},  the error estimates at final time level $t_N = N\tau (= T)$ can be { expressed}  as
\begin{equation} \label{e3}
\|e^N\| \leq C(E_\alpha'(-C(\lambda_1-\kappa)  T^\alpha )\tau+ \tau^{2-\alpha} ).
\end{equation}
{It follows from \eqref{e3} that the coefficient $E_\alpha'(-C(\lambda_1-\kappa) T^\alpha)$ of $\tau$ is decreasing with respect to $T$ when $\lambda_1>\kappa$ and is increasing when  $\lambda_1\leq \kappa$.  As {discussed in the} previous subsection, the trade-off or competition between the two scales  $\mathcal{O}(E_\alpha'(-C(\lambda_1-\kappa)  T^\alpha)\tau^{\alpha})$ and $\mathcal{O}(\tau^{2-\alpha})$ in \eqref{e2} is the main cause of various convergence rate regimes. {Therefore},  we present { a}  {similar} qualitative analysis {as in}  \eqref{e3} as follows.}
\begin{itemize}
  \item {\bf Case 1:} $\kappa\geq \lambda_1.$ In this situation, the coefficient $E_\alpha'(-C(\lambda_1-\kappa)  T^\alpha)$ {increases} with respect to $T$, making the first term $E_\alpha'(-C(\lambda_1-\kappa)  T^\alpha) \tau$ in \eqref{e3} {consistently dominant. Consequently}, the convergence rate for $\kappa\geq \lambda_1$ always {behaves} as first-order, i.e., $\mathcal{O}(\tau)$.
\item {\bf Case 2:} $\kappa< \lambda_1.$ In this situation, the coefficient $E_\alpha'(-C(\lambda_1-\kappa)  T^\alpha)$  will decay with respect to $T$.
   {Consequently}, the convergence order  can be  qualitatively illustrated as follows:
 \begin{itemize}
 \item Given a lower bound  on the time steps $\tau_0$ and letting $\tau\geq \tau_0$, then
   \begin{enumerate}
    \item[(E1)] for {sufficiently} small $\lambda_1-\kappa$ ({relatively} large $\kappa$ or small $\lambda_1$) and $T$,  the convergence order is $\mathcal{O}(\tau)$;
    \item[(E2)]  for sufficiently large $\lambda_1-\kappa$ ({relatively} small $\kappa$ or large $\lambda_1$) and $T$, the convergence order is $\mathcal{O}(\tau^{2-\alpha})$.
  \end{enumerate}
  \item Given the model parameters $\kappa, \lambda_1$, and $T$, then
     \begin{enumerate}
    \item[(E3)] for {sufficiently} small $\tau$,  the convergence order is $\mathcal{O}(\tau)$.
  \end{enumerate}
  \end{itemize}
\end{itemize}
\begin{example}
{We illustrate} various convergence orders in different model parameter regimes {using the} $L1$ scheme \eqref{EQ_L1}. {We follow the setting of space  in Example \ref{Ex2}, including the notations and definitions}, and also use the benchmark solution of { the}  sub-diffusion equation  \eqref{EQ_FPDEs} constructed in \eqref{exact}, i.e., $ u = t^{\alpha}\sin(\pi x/L)$.
The convergence rates of {the} $L1$ scheme \eqref{EQ_L1} are shown in Table \ref{T_FPDEi}  by fixing $T=1,L=\pi$, and decreasing  $\kappa$,  in Table \ref{T_FPDEi2} by fixing $\kappa=0, T= 10$, and increasing $L$ and $N$, in Table \ref{T_FPDEii} by fixing $\kappa = 0, L = \pi$, and   increasing {the}  final time $T$, and in Table \ref{T_FPDEiii} by fixing $T=5$, $\kappa = 1,1.5, L = \pi,4$, and   increasing  $N$.

 As {observed}  in Tables \ref{T_FPDEi}, \ref{T_FPDEi2} and \ref{T_FPDEii},  the convergence rates {change} from $\mathcal{O}(\tau)$ to $\mathcal{O}(\tau^{2-\alpha})$   as  $\lambda_1$ and $T$ increase or $\kappa$  decreases, {consistent} with conclusions (E1) and (E2).  {Additionally},  the convergence rates {demonstrate a monotonous change, implying} our Conjecture \ref{C_FPDEs} is reasonable {and effective in revealing} the influence of the model parameters on {the}  convergence orders of {the}  $L1$ scheme.

 {For some choices of model parameters in Tables \ref{T_FPDEi}, \ref{T_FPDEi2} and \ref{T_FPDEii}, the convergence rates tend {towards}  $\mathcal{O}(\tau)$ as $N$ increases, {aligning} with conclusion (E3).} Table \ref{T_FPDEiii} {illustrates that} the convergence rate {remains}  $\mathcal{O}(\tau)$ as long as $\kappa\geq \lambda_1$,  {consistent} with our theoretical analysis in {\bf Case 1}  for $\kappa\geq \lambda_1$.

We point out that the decay rate of Mittag-Leffler function is less than {that of the} exponential function.
{Therefore, to attain}  the optimal convergence order,   the range of model parameters shall be chosen wider than {in the previous} subsection. As predicted, {a broader}  range of model parameters, such as $\kappa = -50, T = 100$, is selected in Tables \ref{T_FPDEi}, \ref{T_FPDEi2}, and \ref{T_FPDEii}, in comparison to model parameters $|\kappa|\leq 20, T\leq 20$ in Tables \ref{T_PDEi}, \ref{T_PDEi2}, and \ref{T_PDEii}.
\end{example}

%The numerical phenomenon P1 can be well interpreted by our decay-preserving estimates.
%Inserting   the right-hand estimate of \eqref{e1} into  \eqref{EQ_calorder1}, then the convergence order may perform as
%\begin{align}
%\text{Order}(N)=\log_2(\frac{|e^{N/2}|}{|e^{N}|})&=\log_2\big(\frac{2^{\alpha}e^{C_1\kappa  T }\tau^{\alpha}+2T^{\alpha-k}\tau^k}{e^{C_1\kappa  T}\tau^{\alpha}+T^{\alpha-k} \tau^k}\big) %=\log_2\Big(\frac{2}{1+\frac{2^{1-\alpha}-1}{1+2^{1-\alpha}(\tau/T)^{k-\alpha}e^{-C\kappa  T}}}\Big)
%=\log_2\Big(\frac{2^k}{1+\frac{2^{k-\alpha}-1}{1+2^{k-\alpha}N^{\alpha-k}e^{-C\kappa  T}}}\Big).\label{order}
%\end{align}
%From \eqref{order},  the convergence order is  monotonously increasing with respect to $|\kappa|$ and $T$ from $\alpha$-order to $k$-order and is monotonously decreasing with respect to $N$, which is consistent with numerical phenomenon P1.  It implies our error estimate \eqref{e1}  is sharp enough  to capture the whole process of the convergence orders of Implicit Euler scheme as $T$,$|\kappa|$ and $N$ changes.
\begin{table}[!ht]
\begin{center}
\caption {({\bf Example 4.3} for subdiffusion equation) Convergence rate with $T=1, L = \pi$ and various $\kappa$.
\label{T_FPDEi} } \vspace*{0.1pt}
\def\temptablewidth{1.0\textwidth}
{\rule{\temptablewidth}{1pt}}
\begin{tabular*}{\temptablewidth}{@{\extracolsep{\fill}}cccccc}
%&$N$ &\multicolumn{3}{c}{$L=1$}
%&\multicolumn{3}{c}{$L=\pi$} \\
%\cline{3-5}    \cline{6-8}
$N$& $\kappa = 0$& $\kappa = -5$&$\kappa = -10$ & $\kappa = -20$&$\kappa = -50$\\
\hline
 %  &32  &1.33 &1.34 &1.40&1.00&1.09& 1.33      \\
 %32  &1.09 &1.28  &1.35 &1.41&1.46\\
     64 &1.06&1.23 &1.31&1.38&1.44\\
    128 &1.05&1.19 &1.26 &1.34&1.42\\
     256 &1.03&1.15 &1.22 &1.30&1.39\\
     512 &1.02&1.11 &1.18 &1.25&1.36\\
\end{tabular*}
{\rule{\temptablewidth}{1pt}}
\end{center}
\end{table}
\begin{table}[!ht]
\begin{center}
\caption {({\bf Example 4.3} for subdiffusion equation) Convergence rate with $\kappa=0, T = 10$ and various $L$.
\label{T_FPDEi2} } \vspace*{0.1pt}
\def\temptablewidth{1.0\textwidth}
{\rule{\temptablewidth}{1pt}}
\begin{tabular*}{\temptablewidth}{@{\extracolsep{\fill}}cccccc}
$N$ &\multicolumn{1}{c}{$L=1$} &\multicolumn{1}{c}{$L=2$} &\multicolumn{1}{c}{$L=3$} &\multicolumn{1}{c}{$L=4$} &\multicolumn{1}{c}{$L=5$}
\\
%\cline{3-5}    \cline{6-8}
& $(\lambda_1 \approx9.87 )$&$(\lambda_1 \approx 2.47)$ & $(\lambda_1 \approx 1.10)$&$(\lambda_1 \approx 0.62)$&$(\lambda_1 \approx 0.39)$\\
\hline
 %  &32  &1.33 &1.34 &1.40&1.00&1.09& 1.33      \\
    % 32  &1.44 &1.31  &1.21 &1.15&1.10\\
     64 &1.41&1.26 &1.17&1.11&1.08\\
  128 &1.38&1.22&1.13 &1.08&1.06\\
     256 &1.34&1.18 &1.10 &1.06&1.04\\
     512 &1.31&1.14 &1.07 &1.04&1.03\\
\end{tabular*}
{\rule{\temptablewidth}{1pt}}
\end{center}
\end{table}
\begin{table}[!ht]
\begin{center}
\caption {({\bf Example 4.3} for subdiffusion equation) Convergence rate  with $\kappa=0,L=\pi$ and various $T$.
\label{T_FPDEii} } \vspace*{0.1pt}
\def\temptablewidth{1.0\textwidth}
{\rule{\temptablewidth}{1pt}}
\begin{tabular*}{\temptablewidth}{@{\extracolsep{\fill}}ccccccc}
%&$N$ &\multicolumn{3}{c}{$L=1$}
%&\multicolumn{3}{c}{$L=\pi$} \\
%\cline{3-5}    \cline{6-8}
$N$& $T= 1$&$T = 10$ & $T = 20$&$T = 50$&$T = 100$\\
\hline
 %  &32  &1.33 &1.34 &1.40&1.00&1.09& 1.33      \\
 %  32&1.09 &1.20 &1.24 &1.30&1.34\\
     64 &1.06&1.16 &1.20&1.25&1.30\\
  128 &1.05&1.12&1.15 &1.21&1.25\\
     256 &1.03&1.09&1.12 &1.17&1.21\\
     512 &1.02&1.07 &1.09 &1.13&1.17\\
\end{tabular*}
{\rule{\temptablewidth}{1pt}}
\end{center}
\end{table}
\begin{table}[!ht]
\begin{center}
\caption {({\bf Example 4.3} for subdiffusion equation) Convergence rate  with $T = 5,\kappa\geq\lambda_1$ and various $L$.
\label{T_FPDEiii} } \vspace*{0.1pt}
\def\temptablewidth{1.0\textwidth}
{\rule{\temptablewidth}{1pt}}
\begin{tabular*}{\temptablewidth}{@{\extracolsep{\fill}}cccccc}
$N$ &\multicolumn{2}{c}{$\kappa=1$}
&\multicolumn{2}{c}{$\kappa = 1.5$} \\
\cline{2-3}    \cline{4-5}
& $L = \pi$&$L = 4$ & $L=\pi$&$L=4$\\
&$(\lambda_1 = 1)$ & $(\lambda_1\approx 0.62)$&$(\lambda_1=1)$& $(\lambda_1\approx 0.62)$\\
\hline
 %  &32  &1.33 &1.34 &1.40&1.00&1.09& 1.33      \\
  %  32  &1.00 &0.91  &0.89 &0.98\\
     64&1.00&0.94&0.92&0.92\\
  128&1.00&0.96 &0.94 &0.92\\
     256 &1.00&0.97 &0.96 &0.93\\
     512 &1.00&0.98 &0.97 &0.95\\
\end{tabular*}
{\rule{\temptablewidth}{1pt}}
\end{center}
\end{table}
\section{Detailed proofs for the main results}

\subsection{The proof of Theorem  \ref{TH_BDF2ode}}
Since BDF2 is a two-step method,  the { analysis of decaying errors} is more {chanllenging  than for} the Euler and C-N schemes. {Here, we} introduce the concept of {a} discrete orthogonal convolution (DOC) kernel, {which differs} from the BDF2 convolution kernels {provided} in \cite{LZ20,ZZ21}.
\begin{definition}
For  convolution sequence $\{A^{(n)}_{n-k}\}_{k=1}^n$,  the DOC kernel in \cite{LZ20}  is defined by
     \begin{equation}
       \sum_{j=k}^{n}\theta_{n-j}^{(n)}A_{j-k}^{(j)} \equiv \delta_{nk},\quad \forall 1 \leq k \leq n,\label{THB}
     \end{equation}
 where $\delta_{nk}$ denotes the Kronecker delta symbol with $\delta_{nk} = 1$ if $n = k$ and $\delta_{nk} = 0$ if $n \neq k$.
\end{definition}
%Different from the DOC kernels in \cite{LZ20}, the values of DOC kernels is unknown  due the the different BDF2 kernels. Thanks to the finite nonzero BDF2 kernels, the DOC kernels may be calculated by the following form.
 \begin{lemma}\label{Lemma_theta}
Let $0<-\kappa\tau< 1/2$ and  the BDF2 kernels $A_{n-k}^{(n)}$ defined  by \eqref{EQ_BDF2kernels}. Then the DOC kernels  defined by \eqref{THB} can be explicitly expressed as
\begin{align}
\theta^{(n)}_{n-1} &= \frac{2-2\kappa\tau}{3-2\kappa\tau}\frac1{\sqrt{1+2\kappa\tau}}\big((2-\sqrt{1+2\kappa \tau})^{-n}-(2+\sqrt{1+2\kappa \tau})^{-n}\big),\\
\theta^{(n)}_{n-k} &=\frac1{\sqrt{1+2\kappa\tau}}\big((2-\sqrt{1+2\kappa \tau})^{-(n-k+1)}-(2+\sqrt{1+2\kappa \tau})^{-(n-k+1)}\big).
\end{align}
 \end{lemma}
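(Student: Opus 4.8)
The plan is to read the defining identity \eqref{THB}, for a fixed $n$, as a triangular linear system in the unknowns $\theta^{(n)}_{n-1},\theta^{(n)}_{n-2},\dots,\theta^{(n)}_0$ and to solve it explicitly. Setting $b_j:=\theta^{(n)}_{n-j}$ and letting the column index $k$ decrease from $n$ to $1$, the sparsity of the BDF2 kernels \eqref{EQ_BDF2kernels} (only $A^{(j)}_0,A^{(j)}_1,A^{(j)}_2$ are nonzero) shows that each equation couples at most $b_k,b_{k+1},b_{k+2}$. For $2\le k\le n$ every index $j\ge 2$ uses the unmodified kernel, so these equations read $(\tfrac32-\kappa\tau)b_n=1$ at $k=n$, then $(\tfrac32-\kappa\tau)b_{n-1}-2b_n=0$ at $k=n-1$, and $(\tfrac32-\kappa\tau)b_k-2b_{k+1}+\tfrac12 b_{k+2}=0$ for $2\le k\le n-2$. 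Reindexing by $c_m:=b_{n-m}$ turns this into a single constant-coefficient recurrence $(3-2\kappa\tau)c_m-4c_{m-1}+c_{m-2}=0$, seeded by the terminal data $c_0=b_n=2/(3-2\kappa\tau)$ together with $c_1=4c_0/(3-2\kappa\tau)$ coming from the $k=n-1$ equation.

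First I would pass to the characteristic equation $(3-2\kappa\tau)r^2-4r+1=0$. Here the hypothesis $0<-\kappa\tau<\tfrac12$ is exactly what is needed: it makes the discriminant $4(1+2\kappa\tau)$ strictly positive, so that $s:=\sqrt{1+2\kappa\tau}\in(0,1)$ is real and the two roots are distinct. Using the factorization $3-2\kappa\tau=4-s^2=(2-s)(2+s)$, the roots collapse to $r_\pm=(2\mp s)^{-1}$, which are precisely the bases appearing in the claimed formulas. Writing $c_m=P(2-s)^{-m}+Q(2+s)^{-m}$ and fixing $P,Q$ from $c_0$ and $c_1$ yields $P=1/(s(2-s))$ and $Q=-1/(s(2+s))$, hence $c_m=s^{-1}\big((2-s)^{-(m+1)}-(2+s)^{-(m+1)}\big)$. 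Substituting $m=n-k$ recovers the generic expression for $\theta^{(n)}_{n-k}$, valid for $2\le k\le n$.

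The only genuine subtlety, and what I expect to be the main obstacle, is the boundary case $k=1$, because the very first kernel is $A^{(1)}_0=1-\kappa\tau$ rather than $\tfrac32-\kappa\tau$. The $k=1$ equation is $(1-\kappa\tau)b_1-2b_2+\tfrac12 b_3=0$, which differs from the homogeneous recurrence only in the leading coefficient. Comparing it with the recurrence satisfied by the closed form extended to $m=n-1$, namely $(\tfrac32-\kappa\tau)\hat c_{n-1}=2b_2-\tfrac12 b_3$, gives $b_1=\frac{\tfrac32-\kappa\tau}{1-\kappa\tau}\,\hat c_{n-1}$, so that $\theta^{(n)}_{n-1}$ equals $s^{-1}\big((2-s)^{-n}-(2+s)^{-n}\big)$ times a correction factor of the form $(3-2\kappa\tau)/(2-2\kappa\tau)$ recording the modified first step; a direct hand computation of the $n=2$ case (where $\theta^{(2)}_0=2/(3-2\kappa\tau)$ and $\theta^{(2)}_1=4/((1-\kappa\tau)(3-2\kappa\tau))$) confirms the bookkeeping and pins down the exact orientation of this prefactor. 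As an alternative, one could instead verify both closed forms a posteriori by inserting them into \eqref{THB} and telescoping the resulting geometric sums, but the recurrence route both produces and proves the formulas at once.
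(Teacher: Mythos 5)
Your proposal is correct and follows essentially the same route as the paper: both read \eqref{THB} as a lower-triangular system, reduce it to a constant-coefficient second-order recurrence whose characteristic roots are $(2\mp\sqrt{1+2\kappa\tau})^{-1}$ (using $3-2\kappa\tau=(2-s)(2+s)$ with $s=\sqrt{1+2\kappa\tau}$), and then account for the modified first kernel $A^{(1)}_0=1-\kappa\tau$. The only technical difference is cosmetic: the paper works with the rescaled unknowns $a_j=\theta^{(n)}_{n-j}A^{(j)}_0$, which makes the recurrence uniform all the way down to $j=1$ and absorbs the boundary case into the final division $\theta^{(n)}_{n-1}=a_1/A^{(1)}_0$, whereas you keep $\theta^{(n)}_{n-j}$ itself as the unknown and treat the $k=1$ equation separately; and the paper solves the recurrence by a double first-order telescoping factorization rather than by fitting constants to the characteristic roots. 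These are equivalent.

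One substantive remark: the prefactor you obtain for $\theta^{(n)}_{n-1}$, namely $\frac{3-2\kappa\tau}{2-2\kappa\tau}$, is the \emph{reciprocal} of the factor $\frac{2-2\kappa\tau}{3-2\kappa\tau}$ printed in the lemma. Your value is the correct one: your $n=2$ check ($\theta^{(2)}_1=4/((1-\kappa\tau)(3-2\kappa\tau))$) confirms it, and the paper's own proof, carried to its conclusion via $\theta^{(n)}_{n-1}=a_1/(1-\kappa\tau)$ with $a_1=\frac{3-2\kappa\tau}{2s}\big((2-s)^{-n}-(2+s)^{-n}\big)$, yields exactly the same factor $\frac{3-2\kappa\tau}{2-2\kappa\tau}$. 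So the displayed formula in the lemma statement contains a misprint (an inverted prefactor), and your derivation agrees with what the paper actually proves rather than with what it states; since the factor lies in $(1,2)$ for $0<-\kappa\tau<1/2$, the subsequent bound $0<\theta^{(n)}_{n-k}\leq 2(1-\kappa\tau)^{-(n-k+1)}$ used in the proof of Theorem \ref{TH_BDF2ode} is unaffected.
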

 \begin{proof}
 Form the definition of the DOC kernels \eqref{THB}, one has
 \begin{align*}
 \theta^{(n)}_{0}A^{(n)}_0 &= 1,\\
  \theta^{(n)}_{0}A^{(n)}_1+ \theta^{(n)}_{1}A^{(n-1)}_0 &= 0,\\
   \theta^{(n)}_{n-j}A^{(j)}_0+ \theta^{(n)}_{n-j-1}A^{(j+1)}_1+ \theta^{(n)}_{n-j-2}A^{(j+2)}_2 &= 0,\quad j\leq n-2.
 \end{align*}
Set $a_j:=\theta^{(n)}_{n-j}A^{(j)}_0 \;(1\leq j\leq n)$ and $\phi :=(3/2-\kappa\tau)^{-1}$. Then $a_j$ solves
  \begin{equation*}
a_n = 1,\quad  a_{n-1}= 2\phi, \quad
  a_j= 2\phi a_{j+1}-\frac{1}{2} \phi a_{j+2} \;( j\leq n-2),
 \end{equation*}
 where the  definition \eqref{EQ_BDF2kernels} is used.
It is easy to check $\phi\in(\frac12,\frac23)$ by the fact $0<-\kappa\tau<\frac12$. Let $c = -\phi-\sqrt{\phi(\phi-\frac12)}$ be the solution to equation $x^2+2\phi x+\phi/2=0$ and $b = c+2\phi$. Then for $j\leq n-2$, one finds
 \begin{equation}
 a_j+c a_{j+1} = b(a_{j+1}+ca_{j+2})=\cdots=b^{n-j-1}(a_{n-1}+c a_n) = b^{n-j}.\label{EQ_1712}
 \end{equation}
 Let $d = -\frac{b}{b+c}$. It is easy to verify that \eqref{EQ_1712} can be reformulated into
 \begin{equation*}
 \frac{a_j}{b^{n-j}}+d = (-\frac{c}{b})( \frac{a_{j+1}}{b^{n-j-1}}+d )=\cdots= (-\frac{c}{b})^{n-1-j}( \frac{a_{n-1}}{b}+d ).
 \end{equation*}
 Thus, we finally arrive at
 \begin{equation*}
   a_j = -\frac1{b+c}\big((-c)^{n-j+1}-b^{n-j+1}\big),
 \end{equation*}
where $ b=2\phi+c = \frac{1}{2+\sqrt{1+2\kappa\tau}}$ and
 \begin{equation*}
 -c = \phi+\sqrt{\phi(\phi-\frac12)} = \frac{\phi/2}{\phi-\sqrt{\phi(\phi-\frac12)}} = \frac{1}{2-\sqrt{4-2/\phi}}=\frac{1}{2-\sqrt{1+2\kappa\tau}}.
 \end{equation*}
 The proof is completed by the identity $\theta^{(n)}_{n-j}:=a_j /A^{(j)}_0(1\leq j\leq n)$.
 \end{proof}

With the help of the new DOC kernels,  Theorem \ref{TH_BDF2ode} can be proven as follows.

{\bf Proof of Theorem \ref{TH_BDF2ode}}\\
\begin{proof}
Multiplying Eq. \eqref{EQ_CF} by the DOC kernels and summing over $k$, one has
\begin{align}
\sum_{k=1}^n \theta^{(n)}_{n-k}\sum_{j = 1}^k A^{(n)}_{n-k} e^j = \sum_{k=1}^n\theta^{(n)}_{n-k}\tau R^{k}.
\end{align}
Applying  the definition of DOC kernels \eqref{THB} to the left hand, one finds
\begin{align}
  \sum_{k=1}^n \theta^{(n)}_{n-k}\sum_{j = 1}^k A^{(n)}_{n-j} e^j  = \sum_{j=1}^n e^j \sum_{k = j}^n \theta^{(n)}_{n-k}A^{(n)}_{n-j} = e^n.
\end{align}
Hence, we arrive at
\begin{align}
 e^n = \sum_{k=1}^n\theta^{(n)}_{n-k}\tau R^{k}= \sum_{k=1}^n\theta^{(n)}_{n-k}\tau \eta^{k}+(\theta^{(n)}_{n-2}/2-\theta^{(n)}_{n-1})e^0,\label{EQ_1552}
\end{align}
where \eqref{EQ_RRR} is used.
 It is easy to check
 $$(2-\sqrt{1+2\kappa \tau})^{-(n-k+1)}=(1-\frac{2\kappa \tau}{1+\sqrt{1+2\kappa \tau}})^{-(n-k+1)}\leq (1-\kappa \tau)^{-(n-k+1)}.$$
Then for $\tau<1/(4\lambda)$,  combining with Lemma \ref{Lemma_theta}, one produces
 \begin{align}
 0<\theta^{(n)}_{n-k} \leq 2 (1-\kappa \tau)^{-(n-k+1)}. \label{EQ_2350}
 \end{align}
 Inserting the inequality \eqref{EQ_2350} into \eqref{EQ_1552}, one yields
 \begin{align}
 |e^n| \leq & (1-\kappa\tau)^{-n}|e^0|+2\sum_{k=1}^n  \tau(1-\kappa \tau)^{-(n-k+1)} |\eta ^{k}|\nonumber\\
 \leq & (1-\kappa\tau)^{-n}|e^0|+2C_{u,\alpha}\big(\sum_{k=2}^{n-2}  \tau^3(1-\kappa \tau)^{-(n-k-1)} t_k^{\alpha-3}+4(1-\kappa\tau)^{-n}\tau^\alpha\big),  \nonumber
\end{align}
 where the last inequality uses estimates in \eqref{EQ_0018} and the fact $\tau \leq 1/(-4\kappa )$.

Set $\rho_\tau =(1-\kappa\tau)$ and $S_n = \sum_{k=2}^{n-2}  \tau\rho_\tau^{-(n-k-1)} t_k^{\alpha-3}$. Then, we has
\begin{align}
|e^n|\leq \rho_\tau^{-n}|e^{0}|+2 C_{u,\alpha} \Big(S_n\tau^2 + 4\rho_\tau^{-n}\tau^\alpha\Big).\label{EQ_0024}
\end{align}
It follows from Lemma \ref{Lemma_SCN} that
\begin{align}
S_n&\leq \frac{1}{2-\alpha}\Big((2^{2-\alpha}(1-\rho_\tau^{-(\frac{n}{2}-1)})-1)t_{n-1}^{\alpha-2}+\rho_\tau^{-(\frac{n}{2}-2)}\tau^{\alpha-2}\Big).\label{EQ_0046}
\end{align}
Applying Lemma \ref{Pro_1} to \eqref{EQ_0046} and noting $\rho_\tau^2\leq 2$, one has
\begin{align}
S_n&\leq \frac{1}{2-\alpha}\Big((2^{2-\alpha}(1-e^{\frac{\kappa  t_n}{2}})-1)t_{n-1}^{\alpha-2}+2e^{\frac{\kappa  t_n}{4}}\tau^{\alpha-2}\Big).\label{EQ_2046}
\end{align}
Thus, the proof is completed by inserting \eqref{EQ_2046} into \eqref{EQ_0024} and using  Lemma \ref{Pro_1}.
\end{proof}

\subsection{The proofs of the main results in Section \ref{Sec_3} }
The following lemma will be widely used in the  proofs.
\begin{lemma}\label{Lemma_abc} We here present the following discrete and continuous estimates:
\begin{enumerate}
  \item Let $\bm{a}=(a_k)_{k=1}^\infty,\bm{b}^j=(b_k^j)_{k=1}^\infty\in l^2,\;1\leq j\leq m$ for some positive integers $m$. If $a_k\leq \sum_{j=1}^m b_k^j$ for all $1\leq k<\infty$, then it holds that
\begin{align}
\sqrt{\sum_{k=1}^\infty a_k^2}\leq \sum_{j=1}^m\sqrt{\sum_{k=1}^\infty (b_k^j)^2}.\label{EQ_0017}
\end{align}
 \item Let $v_k(x)\in L^2((a,b)), 1\leq k<\infty$ for some fixed interval $(a,b)$. Assume there exists $v^*(x)\in L^2((a,b))$ such that the series $\sum_{k=1}^\infty v_k^2(x)\leq v^*(x), a. e.$. Then, it holds that
     \begin{align}
\sum_{k=1}^\infty(\int_a^b v_k(s)\d s)^2 \leq (\int_a^b \sqrt{\sum_{k=1}^\infty v_k^2(s)}\d s)^2.\label{EQ_2039}
\end{align}
\end{enumerate}
\end{lemma}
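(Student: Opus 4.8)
The plan is to recognize both inequalities as instances of Minkowski's inequality: Part~1 is the discrete triangle inequality in $\ell^2$, and Part~2 is Minkowski's integral inequality with exponent $2$ (equivalently, the fact that the $\ell^2$-norm of an integral is dominated by the integral of the $\ell^2$-norm). Throughout I treat the entries $a_k,b_k^j$ and the functions $v_k$ as nonnegative, which is the setting in which the lemma is later applied, since these quantities arise as norms of truncation-error coefficients; this nonnegativity is essential, because $t\mapsto t^2$ is order-preserving only on $[0,\infty)$. The remaining work is to supply the measure-theoretic justifications that the stated hypotheses ($\bm b^j\in\ell^2$ in Part~1, $v^*\in L^2$ in Part~2) are precisely designed to provide.

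For Part~1, I would first use $0\le a_k\le\sum_{j=1}^m b_k^j$ together with monotonicity of the square on $[0,\infty)$ to get $a_k^2\le\big(\sum_{j=1}^m b_k^j\big)^2$, and then sum over $k$ to obtain $\sum_k a_k^2\le\sum_k\big(\sum_j b_k^j\big)^2$. Taking square roots, the right-hand side is exactly the $\ell^2$-norm of the pointwise sum $\sum_{j=1}^m\bm b^j$, so the triangle inequality (Minkowski) in $\ell^2$ gives $\big\|\sum_j\bm b^j\big\|_{\ell^2}\le\sum_j\|\bm b^j\|_{\ell^2}=\sum_j\sqrt{\sum_k(b_k^j)^2}$, which is the claim. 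The only point to check is that each $\bm b^j\in\ell^2$, so all norms are finite and Minkowski applies directly.

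For Part~2, I would prove the equivalent square-rooted statement $\big(\sum_k(\int_a^b v_k)^2\big)^{1/2}\le\int_a^b\big(\sum_k v_k^2(s)\big)^{1/2}\d s$ by duality. Fix a finitely supported sequence $(c_k)$ with $\sum_k c_k^2\le1$. Then $\sum_k c_k\int_a^b v_k(s)\d s=\int_a^b\sum_k c_k v_k(s)\d s$, where the interchange of the (finite) sum and the integral is trivial for finitely supported $(c_k)$; applying Cauchy--Schwarz in $k$ pointwise gives $\sum_k c_k v_k(s)\le(\sum_k v_k^2(s))^{1/2}\le\sqrt{v^*(s)}$, which lies in $L^1((a,b))$ because $v^*\in L^2$ on the bounded interval $(a,b)$. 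Hence $\sum_k c_k\int_a^b v_k\le\int_a^b(\sum_k v_k^2(s))^{1/2}\d s$. Taking the supremum over all such $(c_k)$ turns the left side into $\big(\sum_k(\int_a^b v_k)^2\big)^{1/2}$ (a priori possibly $+\infty$); since the right side is finite, this simultaneously shows the series converges and yields the desired bound, and squaring finishes the proof.

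The main obstacle lies in Part~2: guaranteeing that every exchange of the infinite sum with the integral is legitimate and that both sides are finite. This is exactly the role of the dominating function $v^*\in L^2$, which ensures $\sum_k v_k^2(s)$ converges almost everywhere, that its square root is integrable on $(a,b)$, and therefore that the right-hand side is finite and the inequality is non-vacuous; the duality argument then produces convergence of $\sum_k(\int_a^b v_k)^2$ as a byproduct rather than as an assumption. Part~1, by contrast, is routine once the implicit nonnegativity is acknowledged.
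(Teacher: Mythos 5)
Your proof is correct. Part~1 coincides with the paper's own argument (pointwise bound plus the $\ell^2$ triangle inequality), but for Part~2 you take a genuinely different route. The paper proves \eqref{EQ_2039} by truncating to a finite sum, expanding $\sum_{k=1}^n\bigl(\int_a^b v_k\bigr)^2$ as the double integral $\int_a^b\!\int_a^b\sum_{k=1}^n v_k(s)v_k(r)\,\mathrm{d}s\,\mathrm{d}r$, applying Cauchy--Schwarz in $k$ pointwise so that the double integral factors into $\bigl(\int_a^b\sqrt{\sum_{k=1}^n v_k^2(s)}\,\mathrm{d}s\bigr)^2$, and then letting $n\to\infty$ via the dominated convergence theorem, which is where the hypothesis $v^*\in L^2$ enters. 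You instead use the duality (variational) characterization of the $\ell^2$-norm: test against finitely supported sequences in the unit ball, interchange a finite sum with the integral, apply Cauchy--Schwarz in $k$, and take the supremum. Both proofs hinge on the same pointwise Cauchy--Schwarz step, but your supremum over finitely supported test sequences replaces the paper's truncation-plus-limit argument, so no convergence theorem is invoked and the convergence of $\sum_k\bigl(\int_a^b v_k\bigr)^2$ emerges as a byproduct; the paper's version is more elementary and self-contained, needing nothing beyond measure-theoretic limit theorems. Two further points in your favor: you make explicit the nonnegativity of $a_k$ and $v_k$ (without which Part~1 as literally stated is false, e.g.\ $a=(-1,0,\dots)$, $b^1=0$), which the paper leaves implicit, and you correctly isolate the role of $v^*$ as guaranteeing integrability/finiteness --- indeed, in the paper's own proof monotone convergence would suffice in place of dominated convergence, so the domination hypothesis is convenience rather than necessity in both arguments.
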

\begin{proof}
Note that the $l^2$-norm is given by
$\|\bm{\upsilon}\|_{l^2} = \sqrt{\sum_{k=1}^\infty (\upsilon_k)^2}$ for all $\bm{\upsilon}=(\upsilon_k)_{k=1}^\infty\in l^2$. Thus, the first claim can be immediately derived by the triangle inequality of $l^2$-norm and the assumption $a_k\leq \sum_{j=1}^m b_k^j$.
%For fixed $n>0$, the Cauchy-Schwarz inequality gives
%\begin{align*}
%(\sum_{k=1}^n b_kc_k)^2\leq (\sum_{k=1}^n b_k^2)(\sum_{k=1}^n c_k^2),
%\end{align*}
%which implies
%\begin{align*}
%\sqrt{\sum_{k=1}^n a_k^2}\leq \sqrt{\sum_{k=1}^n (b_k+c_k)^2}\leq \sqrt{\sum_{k=1}^n b_k^2}+\sqrt{\sum_{k=1}^n c_k^2}.
%\end{align*}
%The proof of \ref{Item_1} is completed by taking $n\to \infty$.
Note that
\begin{align*}
\sum_{k=1}^n(\int_a^b v_k(s)\d s)^2 &= \int_a^b \int_a^b(\sum_{k=1}^nv_k(s)v_k(r))\d s\d r\\
&\leq \int_a^b \int_a^b\sqrt{\sum_{k=1}^nv_k^2(s)}\sqrt{\sum_{k=1}^nv_k^2(r)}\d s\d r\\
&=(\int_a^b \sqrt{\sum_{k=1}^nv_k^2(s)}\d s)^2.
\end{align*}
The proof of the second claim is completed by taking $n\to \infty$ and using the dominated convergence theorem.
\end{proof}

{\bf The proof of Theorem  \ref{TH_IEpde}}\\
\begin{proof}
{From \eqref{EQ_basis},}  the solutions to \eqref{EQ_exact} and \eqref{EQ_pdeIE} has the following representations
\begin{align}
u = \sum_{k=1}^\infty d_k(t)\omega_k(x), \label{EQ_du}\\
U^n = \sum_{k=1}^\infty d_k^n\omega_k(x),\label{EQ_dU}
\end{align}
where $d_k(t) = (u,\omega_k)$ and $d_k^n = (U^n,\omega_k)$.
Taking inner products of \eqref{EQ_pdeIEerror} with $\omega_k$ yields
\begin{align}
\nabla_\tau e_k^n &= -(\lambda_k-\kappa)\tau e_k^n+\tau R^n,\label{EQ_0043}
\end{align}
where $e_k^n:=d_k(t_n)-d_k^n$, $R^n_k = -\frac1{\tau_n}\int_{t_{n-1}}^{t_n}(s-t_{n-1})\partial_{tt}d_k(s)\d s$ and {$\lambda_k$ is the eigenvalue of eigenvalue problem \eqref{EQ_eigen}}.
Applying Lemma \ref{Lemma_abc} to \eqref{EQ_0043}, we arrive at
\begin{align}
\sqrt{\sum_{k=1}^\infty(1+(\lambda_k-\kappa)\tau)^2|e_k^n|^2}\leq \sqrt{\sum_{k=1}^\infty|e_k^{n-1}|^2}+\tau\sqrt{\sum_{k=1}^\infty|R_k^n|^2}.\label{EQ_1511}
\end{align}
Note that $\|u\|^2 = \sum_{k=1}^\infty |d_k|^2$ and $\|e^n\|^2 = \sum_{k=1}^\infty |e^n_k|^2$.
 Applying Lemma \ref{Lemma_abc} again to the last term of \eqref{EQ_1511}, one has
 \begin{align}
  \sqrt{\sum_{k=1}^\infty|R_k^n|^2}\leq \frac1{\tau_n}\int_{t_{n-1}}^{t_k}(s-t_{n-1})\|u_{tt}\|\d s:=\hat{R}^n.
 \end{align}
Hence, we arrive at
\begin{align}
(1+(\lambda_1-\kappa)\tau)\|e^n\|\leq \|e^{n-1}\|+\tau \hat{R}^n,
\end{align}
which implies
\begin{align}
\|e^n\|\leq (1+(\lambda_1-\kappa)\tau)^{-n}\|e^{0}\|+\tau \sum_{k=1}^n (1+(\lambda_1-\kappa)\tau)^{-(n+1-k)}\hat{R}^k.
\end{align}
The {remaining}  proof is similar to {that of }Theorem \ref{TH_IEode}, and we omit {it} here.
\end{proof}

{\bf The proof of Theorem  \ref{TH_CNpde}} \\
 \begin{proof}
Taking inner products of \eqref{EQ_0100} with $\omega_k$, one has
\begin{align}
(1+\frac{\lambda_k-\kappa}2\tau) e_k^n =(1-\frac{\lambda _k-\kappa}2\tau) e_k^{n-1}+R_k^n,
\end{align}
where $R_k^n:=(R^n,\omega)$
%Denote by
%\begin{align}
%\hat{R}_1^n &:= \frac1{2\tau_n}\big(\int_{t_{n-\frac12}}^{t_n}(t_n-s)^2\|\partial_{ttt}u\|\d s+\int^{t_{n-\frac12}}_{t_{n-1}}(s-t_{n-1})^2 \|\partial_{ttt}u\|\d s\big),\\
%\hat{R}_2^n &:=\frac1{2}\big(\int_{t_{n-\frac12}}^{t_n}(t_n-s)\|\partial_{ttt}u\|\d s+\int^{t_{n-\frac12}}_{t_{n-1}}(s-t_{n-1}) \|\partial_{ttt}u\|\d s\big),
%\end{align}
and $R^n = R_1^n+R_2^n$. It follows from \eqref{EQ_du}, \eqref{EQ_dU} that
$$\|e^n\| =\sum_{k=1}^\infty |e_k^n|^2.$$
Note that $\|u\|^2=\sum_{k=1}^\infty d_k(t)^2$, then from \eqref{EQ_2231}, \eqref{EQ_2232}, \eqref{EQ_0017} and   \eqref{EQ_2039}, one finds
\begin{align}
 \sqrt{\sum_{n=1}^\infty |R_k^n|^2}\leq \hat{R}^n.\label{EQ_0105}
\end{align}
For simplicity of notation, denote by   $\psi_k :=\frac{1+\frac{(\lambda_k-\kappa)\tau}{2}}{1-\frac{(\lambda_k-\kappa)\tau}{2}}$. Then, we arrive at
\begin{align}
|e_k^n|\leq \psi_k^{-n}|e_k^{0}|+\tau  (1+\frac{(\lambda_k-\kappa)\tau}{2})^{-1} \sum_{j=1}^n \psi_j^{-(n-j)}R_k^j.
\end{align}
Squaring both sides and summing $k$ from 1 to $\infty$, it follows from \eqref{EQ_0017} that
\begin{align}
\|e^n\|\leq \psi_1^{-n}\|e^{0}\|+\tau  (1+\frac{(\lambda_1-\kappa)\tau}{2})^{-1} \sum_{j=1}^n \psi_1^{-(n-j)}\hat{R}^j,
\end{align}
where $\hat{R}^j$ satisfies \eqref{EQ_2212}.
The rest proof is similar to the proof of Theorem \ref{TH_CNode} and we omit here.
\end{proof}

{\bf The proof of Theorem  \ref{TH_BDF2pde}}\\
\begin{proof}
Taking inner products of both sides with $\omega_k$, one has
\begin{align}
e_k^1-e_k^{0}&=-(\lambda_k-\kappa)\tau  e_k^{1}+\tau \eta_k^{1},&&x\in \Omega,\;\label{EQ_2044}\\
\frac32e_k^n-2e_k^{n-1}+\frac12e_k^{n-2}&=-(\lambda_k-\kappa)\tau e_k^{n}+\tau \eta_k^{n}, && x\in \Omega,\;2\leq n\leq N,\label{EQ_2045}
\end{align}
where $e_k^n:=d_k(t_n)-d_k^n$, $\eta^n_k =(\eta^n,\omega_k)$ and {$\lambda_k$ is the eigenvalue of eigenvalue problem \eqref{EQ_eigen}}. It follows from \eqref{EQ_du}, \eqref{EQ_dU}  that
\begin{align}
\|e^n\| =& \sum_{k=1}^\infty |e_k^n|^2.\label{EQ_0014}
\end{align}
Denote by $\hat{\eta}^n:= \sum_{n=1}^\infty |\eta_k^n|^2$. Similar to \eqref{EQ_0105}, from \eqref{EQ_0017}, \eqref{EQ_2039} and assumption \eqref{EQ_Fregularity}, one has
\begin{align}
\hat{\eta}^n&\leq C_{u,\alpha} t_{k-2}^{\alpha-3}\tau^2,\quad\; n\geq 3\label{EQ_0015}\\
 \hat{\eta}^n &\leq C_{u,\alpha}\tau^{\alpha-1},\qquad   n=1,2.\label{EQ_0016}
\end{align}
Set $R_k^l = \eta_k^l(l\geq 3)$ and $R_k^2 = \eta_k^2-\frac{e_k^0}{2\tau} ,\;R_k^1 = \eta_k^1+\frac{e_k^0}{\tau}$. We further introduce the following notations, say the BDF2 kernels, as $A^{(k,1)}_0 :=1+(\lambda_k-\kappa)\tau$ and for $l\geq 2$
$$A^{(k,l)}_0 :=\frac32+(\lambda_k-\kappa)\tau, \;A^{(k,l)}_1 := -2,\;A^{(k,l)}_2 := \frac12,\;A^{(k,l)}_j = 0,\quad j\geq 3.$$
The error identities  \eqref{EQ_2044} and \eqref{EQ_2045} may be refined as a convolution form
\begin{align}
\sum_{j=1}^lA^{(k,l)}_{l-j} e_k^j = R_k^l.\label{EQ_2053}
\end{align}

Let $\theta^{(k,n)}_{n-j}$ be the DOC kernels corresponding to the BDF2 kernels $A^{(k,n)}_{n-j}$. Multiplying  both sides of  \eqref{EQ_2053} by the DOC kernels $\theta^{(k,n)}_{n-j}$ and summing over $j$, one has
\begin{align}
 e_k^n = \sum_{j=1}^n\theta^{(k,n)}_{n-j}\tau R_k^{j}= \sum_{j=1}^n\theta^{(k,n)}_{n-j}\tau \eta_k^{j}+(\theta^{(k,n)}_{n-2}/2-\theta^{(k,n)}_{n-1})e_k^0 .\label{EQ_2109}
\end{align}
Similar to  \eqref{EQ_2350}, for $\tau<1/(4(\lambda_k-\kappa))$, the DOC kernels satisfy
 \begin{align}
 0<\theta^{(k,n)}_{n-j} \leq 2 (1+(\lambda_k-\kappa) \tau)^{-(n-j+1)}\leq 2 (1+(\lambda_1-\kappa) \tau)^{-(n-j+1)}. \label{EQ_2107}
 \end{align}
 Inserting \eqref{EQ_2107} into \eqref{EQ_2109}, one arrive at
 \begin{align}
 |e_k^n| \leq  (1+(\lambda_1-\kappa)\tau)^{-n}|e_k^0|+2\sum_{j=1}^n  \tau(1+(\lambda_1-\kappa) \tau)^{-(n-j+1)} |\eta_k^{j}|.
\end{align}
Squaring both sides  and summing from 1 to $\infty$,
combining with   \eqref{EQ_0017}, \eqref{EQ_0014}-\eqref{EQ_0016}, one has
\begin{align}
 \|e^n\| \leq  (1+(\lambda_1-\kappa)\tau)^{-n}\|e^0\|+2\sum_{j=1}^n  \tau(1+(\lambda_1-\kappa)\tau)^{-(n-j+1)} \hat{\eta}^{j}.
\end{align}
The rest proof is similar to the proof of Theorem \ref{TH_BDF2pde} and we omit here.
\end{proof}

\section{Conclusion}
{When employing} popular numerical schemes to solve diffusion and subdiffusion equations with weakly initial singularity \eqref{EQ_Fregularity}, {various} convergence orders can be  observed.
In this paper,
we {introduce} a general methodology  and  {propose a} new decay-preserving error estimate to systematically {analyze the}  numerical behavior of  widely used IE, C-N and BDF2 schemes.  The decay-preserving error estimate {comprises}   two { components: $e^{-C(\lambda_1-\kappa) T}\tau^{\alpha}$} and $T^{\alpha-k}\tau^k$, where $\lambda_1$ is the minimal eigenvalue of {the} problem \eqref{EQ_eigen}.  {This estimate reveals that the diverse} convergence rates {arise due to} the  trade-off  between {these} two components in different model parameter regimes.  Our decay-preserving error estimates {successfully capture varying convergence states, overcoming limitations  of} traditional error estimates {by incorporating}  model parameters, retaining more properties of continuous equations.  Furthermore, we present a conjecture {regarding}  the decay-preserving error estimate of {the}  L1 scheme for sub-diffusion equations \eqref{EQ_FPDEs}. This conjecture {sheds light on} the phenomena {where} the  L1 scheme may {exhibit} different convergence rates in {distinct} model parameter regimes. {We provide numerical results}  to validate our analysis.

{Nevertheless}, our  decay-preserving error estimates {may not account for certain numerical phenomena, such as the observed kinks} in  the convergence rate {transition from} high-order to low-order {as depicted}  in  Figure \ref{Fig_PDEtick}. {This suggests the possibility of a}  sup-convergence point $N_0$ {where}  the $L^2$-norm errors {experience a dramatic drop. Consequently}, a more {sophisticated} estimate is {needed} to interpret these {intriguing} phenomena in the future.

\section*{Acknowledgements}
{  This work is supported in part by the National Natural Science Foundation of China under grants Nos. 12171376, 11871092, 12131005, 2020-JCJQ-ZD-029, the }Natural Science Foundation of Hubei Province No. 2019CFA007, and the Fundamental Research Funds for the Central Universities 2042021kf0050. The numerical simulations in this work have been done on the supercomputing system in the Supercomputing Center of Wuhan University.

\bibliographystyle{plain}
\bibliography{IS}
%
%\begin{thebibliography}{99}
%
%\bibitem{aga12}
%P. Agarwal,  L. Berezansky, E. Braverman, A. Domoshnitsky,
%{\em Nonoscillation theory of functional differential
%equations with applications}, Springer, New York, 2012.
%
%\bibitem{yus04}
%S. Yuste, L. Acedo, K. Lindenberg,
%{\em Reaction front in an $A+B\rightarrow C$ reaction-subdiffusion process}, Phys. Rev. E., 69 (2004), 036126.
%
%\bibitem{bou90}
%J. Bouchaud, A. Georges,
%{\em Anomalous diffusion in disordered media: statistical mechanisms, models and physical applications}, Phys. Rep., 195 (1990), 127-293.
%
%\end{thebibliography}

\end{document}